\newtheorem{theorem}{Theorem}[section]
\newtheorem{lemma}[theorem]{Lemma}
\newtheorem{proposition}[theorem]{Proposition}
\newtheorem{corollary}[theorem]{Corollary}
\theoremstyle{remark}
\newtheorem{remark}[theorem]{Remark}
\newtheorem{example}[theorem]{Example}
\newtheorem{definition}[theorem]{Definition}
\numberwithin{equation}{section}
\newcommand{\M}{\mathcal{M}}
\newcommand{\T}{\Phi}
\newcommand{\Tp}{\Psi}
\newcommand{\bT}{{\bf T}}
\newcommand{\bt}{{\bf t}}
\newcommand{\bK}{{\bf K}}
\newcommand{\bS}{{\bf S}}
\newcommand{\bs}{{\bf s}}
\newcommand{\so}{\mathscr{O}}
\newcommand{\zp}{z \partial}
\newcommand{\p}{\partial}
\newcommand{\f}{\mathbf{f}}
\newcommand{\h}{{\tilde h}}
\newcommand{\La}{\langle\!\langle}
\newcommand{\Ra}{\rangle\!\rangle}
\newcommand{\Lb}{(\! (}
\newcommand{\Rb}{)\! )}
\newenvironment{dedication}
  {%\clearpage           % we want a new page
   \thispagestyle{empty}% no header and footer
   \vspace*{\stretch{1}}% some space at the top 
   \itshape             % the text is in italics
   \raggedleft          % flush to the right margin
  }
  {\par % end the paragraph
  \vspace{\stretch{3}} % space at bottom is three times that at the top
   %\clearpage           % finish off the page
  }
\begin{document}
\begin{dedication}
In memory of Boris Anatolievich Dubrovin  
\end{dedication}

\title{Quantum flips I: Local model}

\author[Y.-P.~Lee]{Yuan-Pin~Lee}
\address{Y.-P.~Lee: Institute of Mathematics, Academia Sinica, Taipei 10617, Taiwan;
Department of Mathematics, University of Utah, Salt Lake City, Utah 84112-0090, U.S.A.}
\email{yplee.math@gmail.com; yplee@math.utah.edu}

\author[H.-W.~Lin]{Hui-Wen~Lin}
\address{H.-W.~Lin: Department of Mathematics and Taida
Institute for Mathematical Sciences (TIMS),
National Taiwan University, Taipei 10617, Taiwan}
\email{linhw@math.ntu.edu.tw}

\author[C.-L.~Wang]{Chin-Lung~Wang}
\address{C.-L.~Wang: Department of Mathematics and Taida
Institute for Mathematical Sciences (TIMS),
National Taiwan University, Taipei 10617, Taiwan}
\email{dragon@math.ntu.edu.tw}

\subjclass{14N35, 14E30}
\date{10 May 2016, revised 30 September 2019}

%%%%%%%%%%%%%%%%%%%%%%%%%%%%%%%%%%%%%%%

\maketitle

\begin{abstract}
We study analytic continuations of quantum cohomology under simple flips $f: X \dasharrow X'$ along the extremal ray quantum variable $q^\ell$.
The inverse correspondence $\Tp = [\Gamma_f]^*$ by the graph closure gives an embedding of Chow motives $[\hat{X}'] \hookrightarrow [\hat{X}]$ which preserves the Poincar\'e pairing. 
We construct a deformation $\widehat{\Tp}$ of $\Tp = [\Gamma_f]^*$ which induces a non-linear embedding $$QH(X') \hookrightarrow QH(X)$$ in the category of \emph{$F$-manifolds} into the \emph{regular integrable loci} of $QH(X)$ near $q^\ell = \infty$. This provides examples of \emph{functoriality of quantum cohomology} beyond $K$-equivalent transformations.
In this paper, we focus on the case when $X$ and $X'$ are (projective) local models.
\end{abstract}

\tableofcontents

\setcounter{section}{-1}

\section{Introduction} \label{s:0}

The theory of quantum cohomology emerged about three decades from the study of physics on Calabi--Yau 3-folds. The mathematical foundations have been established and many essential tools including localization techniques and degeneration formulas were developed. These lead to fruitful results on explicit computations of quantum cohomology and enumerative geometry. However, one of the very basic property of usual cohomology, the functoriality under natural morphisms, is still generally lacking for quantum cohomology. 

\subsection{Structure of quantum cohomology} \label{ss:str-QH}

Let $X$ be a complex projective manifold and $\overline{M}_{n}(X, \beta)$ the moduli space of stable maps from $n$-pointed rational nodal curves to $X$ with image class $\beta \in NE(X)$, the Mori cone of effective one cycles. For $i \in [1, n]$, let $e_i: \overline{M}_{n}(X, \beta) \to X$ be the evaluation map. The $g = 0$ Gromov--Witten potential
\begin{equation*}
\begin{split}
F (\bt) = \La - \Ra (\bt) := \sum_{n, \beta} \frac{q^\beta}{n!} \langle
\bt^{\otimes n} \rangle^X_{n, \beta} = \sum_{n \ge 0,\, \beta \in NE(X)}
\frac{q^\beta}{n!} \int_{[\overline{M}_{n}(X, \beta)]^{vir}}
\prod_{i = 1}^n e_i^* \bt
\end{split}
\end{equation*}
is a formal function in $\bt \in H = H(X)$ and the Novikov variables $q^\beta$'s. We call $\mathscr{R} := \Bbb C [\![q^\bullet]\!]$ the (formal) K\"ahler moduli and denote $H_\mathscr{R} = H \otimes \mathscr{R}$. 

Let $\{T_{\mu}\}$ be a basis of $H$ and $\{T^{\mu} := \sum g^{\mu \nu} T_{\nu}\}$
the dual basis with respect to the \emph{Poincar\'e pairing} $g_{\mu \nu} = (T_{\mu}.T_{\nu})$. Denote $\bt = \sum t^{\mu} T_{\mu}$. The \emph{big quantum ring} $(QH(X), *)$ is a $\bt$-family of rings $Q_\bt H(X) = (T_\bt H_\mathscr{R}, *_\bt)$ defined by
\begin{equation} \label{e:q-prod}
\begin{split}
T_{\mu} *_\bt T_{\nu} &:= \sum_{\epsilon, \kappa} \p_\mu \p_\nu \p_\epsilon F (\bt) g^{\epsilon \kappa} T_{\kappa} = \sum_\kappa \La T_\mu, T_\nu, T^\kappa \Ra (\bt)T_\kappa \\
&= \sum_{{\kappa},\,n \ge 0,\, \beta \in NE(X)} \frac{q^\beta}{n!} \langle T_{\mu}, T_{\nu}, T^{\kappa}, \bt^{\otimes n} \rangle^X_{n + 3, \beta} T_{\kappa}.
\end{split}
\end{equation}
The WDVV associativity equations equip $H_\mathscr{R}$ a structure of
\emph{formal Frobenius manifold} over $\mathscr{R}$. It is equivalent to the flatness of the \emph{Dubrovin connection}
$$
\nabla^z = d - \frac{1}{z} A := d - \frac{1}{z} \sum_\mu dt^\mu \otimes T_\mu *_\bt
$$
on the formal relative tangent bundle $TH_\mathscr{R}$ for all $z  \in \mathbb{C}^\times$. The connection matrix $A_\mu$ for $z \nabla_\mu$ is $z$-free ($= T_\mu *$). This uniquely characterizes the constant frame $\{T_\mu\}$ among all other frames $\{\tilde T_\mu\}$ with $\tilde T_\mu \equiv T_\mu \pmod{\mathscr{R}}$. \smallskip
 
Indeed, let $\mathscr{D}^z$ be the ring of differential operators generated by $z\p_i$ with coefficients in $\Bbb C[z][\![q^\bullet, \bt]\!]$. The \emph{$\mathscr{D}^z$ module} associated to $z\p_i \mapsto z\nabla_i$ is isomorphic to $\mathscr{D}^z J$ generated by the $J$ function: let $\psi$ be the class of cotangent line at the first marked section, then
\begin{equation*}
  J(\bt, z^{-1}) := 1 + \frac{\bt}{z} + \sum_{\beta, n, \mu} \frac{q^{\beta}}{n!}
  T_{\mu} \left\langle \frac{T^{\mu}}{z(z-\psi)}, \bt^{\otimes n} \right\rangle_{n+1, \beta}^X
\end{equation*}
which encodes invariants with one descendent insertion. The topological recursion TRR implies (the quantum differential equation QDE)
$$
z\p_\mu \, z\p_\nu J = \sum_\kappa A_{\mu \nu}^\kappa \,z\p_\kappa J.
$$

In practice, one might be able to find element $I(\bt_1 ) \in \mathscr{D}^z J$ but only along some restricted variables $\bt_1 \in H_1 \subset H$. If it happens that $H_1$ generates $H$ (either in classical product or quantum product), then often one may compute $J(\bt)$ and/or $\nabla^z$ effectively. For a toric manifold $X$, such an $I$ function can be found through the $\Bbb C^\times$-localization data with $\bt_1 \in H^{\le 2}(X)$. 
 
\subsection{Statement of results} 
 
Which parts of the structure are functorial? 

Y.~Ruan raised the problem around 1997. Since then, partial inspiring progresses were made on $K$-equivalent manifolds (crepant birational transformations) where the functoriality is the simplest  possible one, namely the equivalence of quantum cohomology under analytic continuations along the K\"ahler moduli. 

The purpose of this paper is to go beyond the  setting of $K$-equivalence and to understand the categorical framework to formulate the functoriality. The simplest non $K$-equivalent birational maps ``preserving the K\"ahler moduli'' are smooth ordinary flips. 

A birational map $f: X \dasharrow X'$ is called a \emph{simple $(r, r')$ flips}, with $r  > r' > 0$, if the exceptional loci of $f$
\[
 Z  \cong P^{r} \subset X, \qquad Z'  \cong P^{r'} \subset X'
 \]
 have the local properties:
\[
 N_{Z/X} \cong \so_{P^r} (-1)^{r'+1}, \qquad N_{Z' / X'} \cong \so_{P^{r'}} (-1)^{r+1}.
\]
The flip $f$ is achieved by blowing up $X$ along $Z$ to get $Y = {\rm Bl}_Z X$, with exceptional divisor $E \cong P^r \times P^{r'}$, and then by contracting $E$ along $P^r$ to get $X'$. 
In this first instalment of this project, we are mainly concerned with the (projective) \emph{local models} of the simple $(r,r')$ flips.
That is, 
\[
 X_{loc} = P_{P^r} \left(\so(-1)^{r'+1} \oplus \so \right), \qquad X'_{loc} = P_{P^{r'}} \left( \so(-1)^{r+1} \oplus \so \right).
\]
In particular, $\dim X_{loc} =\dim X'_{loc} = r+ r' +1$ and $X_{loc}$ is Fano.

It was shown in \cite{LLW1} that the graph closure of $f^{-1}$ defines a correspondence $\Tp = [\bar \Gamma_f]^*$ which identifies the Chow motive of $X'$ as a sub-Chow motive of $X$. While $\Tp: H(X') \to H(X)$ \emph{preserves the Poincar\'e pairing}, it does \emph{not preserve the classical ring (cup product) structure}; see \cite[\S~2.3]{LLW1}. 

The simple flips allow two limits.
When $r=r'$, this is a simple flop.
It was shown in \cite{LLW1} that $\T = [\Gamma_f]_*$ induces an isomorphism $H(X) \cong H(X')$ as vector spaces with bilinear pairing. and the ``anomaly'' of $\T$ with respect to the ring structure is cancelled by Gromov--Witten invariants associated to the extremal rays $\ell \subset X$ and $\ell' \subset X'$. This is understood as analytic continuations along the K\"ahler moduli. In fact the \emph{big quantum cohomology rings} are isomorphic $QH(X) \cong QH(X')$ under analytic continuations induced by $\T$. 
The other limit is $r'=0$ and this is the case of blowing down.
Most of our discussions and results apply to these two limiting cases.

In the case of flips we will show that $QH(X')$ can still be regarded as a sub-theory of $QH(X)$ in a canonical, though \emph{non-linear}, manner.

First of all, there is a basic split exact sequence (cf.~Lemma \ref{l:1})
$$
0 \to K \longrightarrow H(X) \mathop{\longrightarrow}^{\T} H(X') \to 0
$$
with splitting map $\Tp: H(X') \to H(X)$. The kernel space (vanishing cycles) $K$ has dimension $d:= r - r'$ and is orthogonal to $\Tp H(X')$.

Secondly, the Dubrovin connection can be analytic continued ``along the K\"ahler moduli'' to a connection $\T \nabla$ under the rule $\T q^\beta = q^{\T \beta}$ with $\beta \in NE(X)$. As $\T \beta$ might not be effective, indeed $\T \ell = -\ell'$, analytic continuations are generally required. By the very construction, 
$$
\nabla_\mu = \p_\mu - \frac{1}{z} T_\mu * 
$$ 
has only (formal) \emph{regular singularities} at $q_i = 0$ in K\"ahler moduli via the standard identification of \emph{divisorial coordinates} $t^i$ and Novikov variables $q_i$ (which follows from the divisor axiom in GW invariants):
$$
q_i = e^{t^i}, \qquad \p_{i} = q_i \frac{\p}{\p {q_i}}.
$$ 
The resulting connection $\T \nabla$ turns out to be analytic in the extremal ray variable $q^\ell$ and contains \emph{irregular directions} along the divisor $q^\ell = \infty$, that is $q^{\ell'} = 0$, corresponding precisely to the kernel subspace $K$. 

This suggests strongly the possibility of extracting the Dubrovin connection $\nabla'$ on $T H'_{\mathscr{R}'}$, where $H' = H(X')$ and $\mathscr{R}' = \Bbb C [\![NE(X')]\!]$, from $\T \nabla$ by ``removing the $K$ directions''--since after all $\nabla'$ is expected to be regular. 

Indeed, in the next step, it is shown that there is an \emph{eigen-decomposition} 
\begin{equation} \label{e:decomp}
TH \otimes \mathscr{R}'[1/q^{\ell'}] = \mathscr{T} \oplus \mathscr{K}
\end{equation}
into irregular eigenbundle $\mathscr{K}$ which extends $K$ over $\mathscr{R}'[1/q^{\ell'}]$ and the regular eigenbundle $\mathscr{T} = \mathscr{K}^\perp$ which is precisely the orthogonal complement of $\mathscr{K}$. From WDVV equations, both $\mathscr{T}$ and $\mathscr{K}$ are shown to be integrable distributions (cf.~Proposition \ref{p:int-mfd}). The integrable submanifold $\M_{q'}$ passing through the section $(q' \ne 0, \bt = 0)$ is then the proposed manifold corresponding to $QH(X')$. 

The decomposition \eqref{e:decomp} has the flavor of Magrange's theorem on formal decomposition of meromorphic connections. Unfortunately $\T \nabla$ turns out has essential singularities along $q^{\ell'} = 0$ in the naive way. So in practice we start with the \emph{small quantum cohomology} $Q_0 H(X)$ and establish \eqref{e:decomp} in that case first, since $\T \nabla$ is then meromorphic of Poincar\'e rank one along $q^{\ell'} = 0$ (cf.~Lemma \ref{l:K}, \ref{l:C2}). \footnote{Another way is to utilize the ``adic'' topology given by the Mori cone near $q^{\ell'} = 0$: modulo any $q^{\beta}$, the irregularity is of finite order. We do not take this approach here.}

If one now restricts to the local models, the \emph{Picard--Fuchs equations} arising from $\Bbb C^\times$-localizations become available. It turns out that $X$ and $X'$ share the same Picard--Fuchs equations after analytic continuations, and this forms the initial step to compare $\mathscr{T}$ and $QH(X')$. 
Technically there are non-trivial \emph{Birkhoff factorizations} and \emph{generalized mirror transforms} involved to go from Picard--Fuchs equations to Dubrovin connections. 
Still, at the end the functoriality turns out to be quite satisfactory: the product structure can be preserved by deforming the embedding $\Tp$ along the underlying Frobenius manifold, if one is willing to give up the conservation of bilinear pairing. This is known as the $F$-structure. 

\begin{theorem} \label{t:main}
For the local model $f: X \dasharrow X'$ of simple $(r, r')$ flips, there is an $\mathscr{R}'$-point $\sigma_0(q') \in H'_{\mathscr{R}'}$ and an embedding $\widehat \Tp(q', \bs)$ over $\mathscr{R}'$: 
$$
\sigma_0(q') + \bs \mapsto  \widehat \Tp(q', \bs): H(X')_{\mathscr{R}'} \longrightarrow \M \hookrightarrow H(X)_{\mathscr{R}'},
$$ 
where $\bs \in H(X')$, such that 
\begin{itemize}
\item[(1)] 
$(\widehat{\Tp}, \sigma_0)$ restricts to $(\Tp: H' \hookrightarrow H, 0)$ when modulo $q^{\ell'}$, 
\item[(2)] outside the divisor $q^{\ell'} = 0$, the big quantum products on the corresponding tangent spaces are preserved (i.e.~$\widehat{\Tp}$ is an $F$-embedding):
$$
\La \widehat{\Tp}_\mu, \widehat{\Tp}^i, \widehat{\Tp}_j \Ra^X (\widehat{\Tp}(q', \bs)) = \La T'_\mu, T'^i, T'_j \Ra^{X'} (\sigma_0(q') + \bs).
$$
\end{itemize}

In particular, there is a ring decomposition
$$
Q_{\widehat\Tp (q', \bs)} H(X)\cong Q_{\sigma_0(q') + \bs} H(X')\times \Bbb C^{r - r'}.
$$
\end{theorem}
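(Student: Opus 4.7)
The plan is to construct $\widehat{\Tp}$ as a parametrization of the integral leaf $\M = \M_{q'}$ of the regular distribution $\mathscr{T}$ from the decomposition \eqref{e:decomp}, and to fix the base point $\sigma_0(q')$ by matching Picard--Fuchs data between $X$ and $X'$. First I would take $\M$ from Proposition \ref{p:int-mfd} as the integral submanifold through $(q^{\ell'}\neq 0, \bt=0)$; since $\mathscr{K}$ has rank $r - r' = d$, the leaf $\M$ has the expected dimension $\dim H(X')$. Reducing modulo $q^{\ell'}$, the analytic continuation $\T\nabla$ degenerates to the classical connection twisted by the correspondence and $\mathscr{T}$ specializes to $\Tp H(X')$; parametrizing $\M$ by flat coordinates then yields condition (1) with $\sigma_0(q')\equiv 0 \pmod{q^{\ell'}}$.

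To identify $\M$ with $QH(X')$ as an $F$-manifold and to determine $\sigma_0$, I would compare $J$-functions. For the local toric models, $\Bbb C^\times$-localization produces explicit $I$-functions $I_X(\bt_1, z^{-1})$ and $I_{X'}(\bt'_1, z^{-1})$ with $\bt_1, \bt'_1 \in H^{\le 2}$. The decisive input is that $\Tp I_X$ and $I_{X'}$ satisfy the same Picard--Fuchs system after analytically continuing $q^\ell \mapsto q^{-\ell'}$: both are hypergeometric series whose GKZ data is matched by the combinatorics of the flip. Applying a Birkhoff factorization in $z$ rewrites $\Tp I_X$ in the form $J_{X'}\bigl(\sigma_0(q') + \bt_1, z^{-1}\bigr)$ times a regular operator, which both produces the \emph{generalized mirror map} $\sigma_0(q') \in H'_{\mathscr{R}'}$ and fixes the small-parameter restriction of $\widehat{\Tp}$.

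To extend to the full big quantum theory and obtain (2), I would invoke Dubrovin reconstruction: the irregular directions of $\T\nabla$ all lie in $\mathscr{K}$, so $\T\nabla|_\M$ has only the standard $z^{-1}$ pole and is the Dubrovin connection of a Frobenius-type structure on $\M$. Its initial data at $\bs = 0$ matches that of $QH(X')$ at $\sigma_0(q')$ by the previous paragraph, and since an $F$-manifold structure is uniquely determined by its Dubrovin connection together with an initial flat frame, this forces $\widehat{\Tp}^*\nabla = \nabla'$ and hence the $F$-embedding identity (2). The ring decomposition in the final clause then follows formally: WDVV combined with the orthogonality $\mathscr{T}\perp\mathscr{K}$ implies both summands are $*_\bt$-closed subrings, so $Q_{\widehat\Tp(q',\bs)}H(X)\cong Q_{\sigma_0(q')+\bs}H(X') \times (\mathscr{K}, *)$, and a direct analysis near $q^{\ell'}=0$ shows that $\mathscr{K}$ is semisimple with $d$ distinct idempotents, producing the factor $\Bbb C^d$. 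The hard part is executing the Birkhoff factorization and constructing $\sigma_0(q')$ explicitly: the retreat to small quantum cohomology recorded in the introduction is precisely what makes $\T\nabla$ of Poincar\'e rank one along $q^{\ell'}=0$ and allows the factorization, but transporting the identification back to the big quantum along $\bs$, while controlling the essential singularity of the full $\T\nabla$, requires delicate use of the integrability from Proposition \ref{p:int-mfd}.
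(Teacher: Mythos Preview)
Your outline is close to the paper's for the small-parameter case: the Picard--Fuchs matching, block diagonalization to split off $\mathscr{K}$, and BF/GMT to produce $\sigma_0(q')$ are exactly what the paper does (Lemma~\ref{l:1.2}, Propositions~\ref{p:5.3}--\ref{p:5.4}, Proposition~\ref{p:5.6}). The integral leaf $\M$ from Proposition~\ref{p:int-mfd} is also the right object.

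The gap is in your extension from $\bs=0$ to general $\bs$. You write that ``$\T\nabla|_{\M}$ \ldots\ is the Dubrovin connection of a Frobenius-type structure on $\M$'' and that ``an $F$-manifold structure is uniquely determined by its Dubrovin connection together with an initial flat frame''. Neither claim is available here. The leaf $\M$ inherits only the quantum \emph{product} from $QH(X)$; the restricted pairing on $T\M$ has no reason to be flat, and indeed the paper is explicit that $\widehat\Tp$ is an $F$-embedding, \emph{not} a Frobenius embedding---the bilinear pairing is sacrificed. So there is no Dubrovin connection intrinsic to $\M$ to which one could apply a uniqueness/reconstruction principle. The paper actually discusses the direct WDVV reconstruction you have in mind (\S\ref{ss:F}, paragraphs before Lemma~\ref{l:cont}) and calls it ``hard'' precisely because on the $X$ side the identity section $\Bbb T_0$ receives corrections at every order; it then abandons that route.

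What the paper does instead is use the extra structure of the local models: both $QH(X')$ and $\M$ are generically \emph{semisimple} (toric, via Iritani), so each carries canonical coordinates $u'^i$ and $u^i$ given by the commuting idempotent vector fields (Dubrovin for Frobenius manifolds, Hertling for $F$-manifolds). The map $\widehat\Tp$ is then \emph{defined} by matching these canonical coordinates, $u^i(\widehat\Tp(q',\bs)) = u'^i(\sigma_0(q')+\bs)$, which automatically sends idempotents to idempotents and hence is an $F$-isomorphism onto its image. The initial matching at $\bs=0$ (your step, made precise as Theorem~\ref{t:small}) fixes the labeling of idempotents. Semisimplicity and canonical coordinates are the missing ingredients in your proposal.
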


This can be described in more geometric, but perhaps less precise, terms.
One first identify $q^{\ell} = 1/ q^{\ell'}$, and this extend the $A^1_{q^{\ell}}$ to $P^1_{q^{\ell}}$.
Note that $QH(X)$ over the Novikov ring $\mathcal{R}$ is analytic with respect to the variable $q^{\ell}$ and in general formal with respect to other Novikov variables.
One can regard $QH(X)_{\mathcal{R}}$ as an analytic family of Frobenius manifold over $A^1_{q^{\ell}}$.
Near $q^{\ell} = \infty$, Theorem~\ref{t:main} states that there is a family of codimension $d=r-r'$ integrable submanifolds which extends to $q^{\ell} =\infty$ as a family of $F$-manifolds.
Furthermore, in a neighborhood of $q^{\ell} = \infty$, this family of $F$-manifold is isomorphic to the $F$-manifold given by $QH(X')$. %(after non-linear change of coordinates).

To give a brief sketch of the proof to Theorem \ref{t:main}, we note that a more precise, and slightly stronger, statement is that $\widehat{\Tp}$ induces an affine (but not Frobenius) embedding over $\mathscr{R}'[1/q^{\ell'}]$:
\begin{equation} \label{e:affine}
\xymatrix{
(T H'_{\mathscr{R}'[1/q^{\ell'}]}, \nabla') \ar@{^{(}->}[r]^{d\widehat{\Tp}}& (T H_{\mathscr{R}'[1/q^{\ell'}]}, \nabla)|_\M \ar[r]& \mathscr{K} \cong N_{\widehat \Tp}}.
\end{equation}
A key step is to prove \eqref{e:affine} in the special case $\bs = 0$ (cf.~Proposition \ref{p:bbgm}), which implies Theorem \ref{t:main} for $\bs = 0$ (cf.~Theorem \ref{t:small}). For general $\bs \in H(X')$, we make use of the local model assumption to get semi-simplicity of $\mathcal{M}$ and $QH(X')$, and then construct the map $\hat\Psi$ by matching the corresponding canonical coordinates (cf.~\eqref{e:Psi}). \footnote{By Proposition \ref{p:bbgm}, the $\mathscr{R}'$-point $\sigma_0(q')$, and hence the embedding $\widehat \Tp(q', \bs)$, is unique if we impose also equation \eqref{e:affine} at $\bs = 0$ in the statement of Theorem \ref{t:main}. Nevertheless we expect that the uniqueness should hold without this additional constraint. } 

While the main results are formulated for the local models, we keep our presentation in theoretic terms whenever possible, with an eye towards future results for global flips.
Indeed, a large part of the proofs works for more general flips with explicit local structures (e.g.~toric flips). Nevertheless all of our results are essentially constructive and effective. The explicit frame leading to Birkhoff factorization and the exact form of connection matrices for the Dubrovin connection for $Q_0 H(X)$ is given. The explicit algorithm for block-diagonalization leading to the eigenbundle decomposition is also given.

As an example to illustrate the ideas involved in the proof of the main theorem, we give the computational details for the $(2, 1)$ flip in the last section (\S \ref{s:(2,1)}, notably Theorem \ref{t:frame}, Lemma \ref{l:extr}, Corollary \ref{c:GMT}, Theorem \ref{t:flip-extr}).

\subsection*{Acknowledgements}
The essential part of this paper was done in 2015--2016 when Y.-P.\ visited H.-W.\ and C.-L.\ at Taida Institute of Mathematical Sciences (TIMS). We are grateful to TIMS for providing excellent working environment to make this collaboration possible. We thank also H.~Iritani for sending us his preprint \cite{hI2} on related results.

\section{From Picard--Fuchs to small $\mathscr{D}^z$-modules}

In this section we study projective local models of $(r, r')$ flips. The classical aspect on cohomology is discussed in \S \ref{s:1.1}. The basic properties of small quantum cohomology are discussed in \S \ref{ss:PF} (Picard--Fuchs ideals) and \S \ref{ss:1st} (first order PDE system).

\subsection{Classical cohomology and correspondence} \label{s:1.1}
We have
\[
 X = P_{P^r} (\so (-1)^{r' + 1} \oplus \so), \qquad X' = P_{P^{r'}} (\so (-1)^{r + 1} \oplus \so).
\]
By Leray--Hirsch, the cohomology ring of $X$ has the following presentation
\[
H(X) = {\mathbb{Z}[h, \xi]}/{(h^{r + 1}, \xi (\xi - h)^{r' + 1})},
\]
where $h$ ( resp. $\xi $) is the hyperplane class of $P^r$ (resp. $X \to P^r$). $H(X)$ has rank $R = (r + 1)(r' + 2)$ with $\mathbb{Z}$-basis
\begin{equation} \label{e:1}
h^{i} (\xi - h)^{j}, \qquad i \in [0, r],\, j \in [0, r' + 1].
\end{equation}
Such a presentation of basis is called a \emph{canonical presentation}. 

Notice that $[Z] = (\xi - h)^{r' + 1}$, and for $i \in [0, r]$
\begin{equation} \label{e:kappa}
k_i := (h|_Z)^i = h^i (\xi - h)^{r' + 1} = (-1)^i (\xi - h)^{r' + i + 1}
\end{equation}
is the class of codimension $i$ linear subspace in $Z$. 

Similar description holds for $X'$ by switching the roles of $r$ and $r'$:
\[
H(X') = {\mathbb{Z}[h', \xi']}/{(h^{r' + 1}, \xi' (\xi' - h')^{r + 1})},
\]
which has rank $R' = (r' + 1)(r + 2)$. We denote a canonical basis by
\begin{equation} \label{e:basis'}
T'_{(i, j)} := (\xi' - h')^{i} h'^{j}, \qquad i \in [0, r + 1], \, j \in [0, r'].
\end{equation} 

It was shown in \cite[\S 2.3]{LLW1} that the Chow motive of $X'$ is a sub-motive of that of $X$ by the correspondence $\Tp = \Phi^* = [\bar\Gamma_{f^{-1}}]$ from $X'$ to $X$, where $\Phi = [\bar \Gamma_f]$. Moreover $\Tp$ preserves the Poincar\'e pairing. Also $\T h = \xi' - h'$, $\T\xi = \xi'$, and $\T$ restricts to an isomorphism on the ideal $\xi.H(X)$ with inverse $\Tp$. The following lemma summarizes it using the basis elements in \eqref{e:1} and \eqref{e:kappa}. Denote
\begin{equation}
d := R - R' = r - r'.
\end{equation}

\begin{lemma} \label{l:1}
The kernel $K$ of $\T: H(X) \to H(X')$ is a free abelian group of rank $d$, generated by
\[
k_i \quad \mbox{with} \quad i \in [0, d - 1].
\]

The image of $\Tp$ in degree $j$ is the full $H^{2j}(X)$ if $j \not\in [r' + 1, r]$. For $j \ge r' + 1$, the image in $H^{2j}(X)$ has a basis given by
\begin{equation} \label{e:basis}
T_{(j - i, i)} := \Tp ( T'_{(j - i, i)} ) = h^{j - i} (\xi - h)^i + (-1)^{r' - i} k_{j - (r' + 1)},
\end{equation}
where $i \in [0, r']$, and the first term vanishes if $j - i \ge r + 1$. 

The pair $(\T, \Tp)$ leads to an orthogonal splitting of $H(X)$:
\begin{equation}
\xymatrix{
0 \ar[r] & K \ar[r] & H(X) \ar[r]^\T & H(X') \ar@/^/[l]^{\Tp} \ar[r] & 0}.
\end{equation}
\end{lemma}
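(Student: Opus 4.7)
My plan is to read this lemma as a bookkeeping consequence of the correspondence computation carried out in \cite[\S 2.3]{LLW1}, expressed in the canonical basis \eqref{e:1}. I would work through it in four steps: a dimension count to locate where $\T$ fails to be an isomorphism, the explicit evaluation of $\Tp$ on the canonical basis of $H(X')$, the identification of $K$, and the orthogonality of the splitting.

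For the dimension count, Leray--Hirsch gives $R - R' = (r+1)(r'+2) - (r+2)(r'+1) = r - r' = d$, and a direct graded-piece count (using the two presentations of $H(X)$ and $H(X')$ together with Poincar\'e duality) shows that $\dim H^{2j}(X) - \dim H^{2j}(X')$ equals $1$ exactly when $j \in [r'+1, r]$ and is $0$ otherwise. So $K$ is forced to sit in these degrees with one-dimensional graded pieces, accounting for the $d$ generators to be exhibited.

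The main computation is the evaluation of $\Tp$. Using $\Tp = p_*(p')^*$ on the common resolution $p : Y \to X$, $p' : Y \to X'$, together with the identifications of hyperplane classes on $Y$ (which match $(p')^*(\xi'-h')$ with $p^*h$ and $(p')^*h'$ with $p^*(\xi-h)$ modulo classes supported on the exceptional divisor $E$), I expect an identity of the shape
\[ \Tp T'_{(j-i,i)} = h^{j-i}(\xi-h)^i + \varepsilon_{(j-i,i)}, \]
with correction $\varepsilon_{(j-i,i)}$ supported in the ideal $((\xi-h)^{r'+1}) = ([Z])$. An intersection-theoretic computation on $Y$ (equivalently a $\Bbb C^\times$-localization on the toric model as in \cite{LLW1}) should identify this correction as $(-1)^{r'-i} k_{j-r'-1}$ when $j \ge r'+1$ and as zero otherwise; the vanishing of the first term when $j-i \ge r+1$ is automatic from $h^{r+1}=0$. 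This gives parts (2) and (3).

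For the kernel, in each degree $j \in [r'+1, r]$ the $r'+2$ classes $\{T_{(j-i,i)}\}_{i=0}^{r'}$ together with $k_{j-r'-1}$ span $H^{2j}(X)$, while the first $r'+1$ exhaust $\Tp H^{2j}(X')$; hence the extra direction is $k_{j-r'-1}$, and letting $j$ range over $[r'+1, r]$ produces the basis $\{k_0, \dots, k_{d-1}\}$ of $K$. The orthogonal splitting is then formal: the graph correspondences $\Phi$ and $\Psi$ are adjoint for the Poincar\'e pairings, so $(k, \Tp a')_X = (\T k, a')_{X'} = 0$ for $k \in K$, $a' \in H(X')$; together with $\T\Tp = \mathrm{id}$ from \cite{LLW1} this yields the orthogonal split exact sequence. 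The main obstacle is pinpointing the precise shape $(-1)^{r'-i} k_{j-r'-1}$ of $\varepsilon_{(j-i,i)}$: this is the geometric input I would import from \cite[\S 2.3]{LLW1} rather than redo here, since it rests on nontrivial pushforward calculations on $Y$ and is not a formal consequence of the generator identities $\T h = \xi'-h'$, $\T \xi = \xi'$ alone.
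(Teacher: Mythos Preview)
Your outline is sound, and your adjunction argument for orthogonality, $(k, \Tp a')_X = (\T k, a')_{X'} = 0$, is cleaner than the paper's direct pairing check. However, there is a substantive gap in your treatment of the key formula \eqref{e:basis}.

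You assert that the correction $\varepsilon_{(j-i,i)} = (-1)^{r'-i} k_{j-r'-1}$ ``rests on nontrivial pushforward calculations on $Y$'' and ``is not a formal consequence of the generator identities $\T h = \xi' - h'$, $\T\xi = \xi'$ alone,'' and therefore defer it to \cite[\S 2.3]{LLW1}. Both claims are mistaken. First, \cite{LLW1} supplies only the generator identities together with the fact that $\T$ restricts to an isomorphism $\xi \cdot H(X) \to \xi' \cdot H(X')$ with inverse $\Tp$; the explicit correction by $k_{j-r'-1}$ is precisely what Lemma~\ref{l:1} is establishing and cannot be imported. Second, the paper's proof shows that the formula \emph{is} a formal consequence of these inputs, via a binomial trick: expand $(\xi'-h')^{j-i} h'^i$, truncate using $h'^{r'+1}=0$ so that every surviving monomial carries a $\xi'$ factor, apply $\Tp$ termwise via $\xi'^a h'^b \mapsto \xi^a(\xi-h)^b$ (valid on the $\xi'$-ideal), then add back the truncated tail in $H(X)$ to re-sum the binomial to $h^{j-i}(\xi-h)^i$; the added-back tail collapses to a single multiple of $(\xi-h)^j$, which \eqref{e:kappa} rewrites as the claimed correction. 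No geometry on $Y$ is invoked.

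So the obstacle you flag does not exist, and by deferring this step you have omitted the actual content of the proof.
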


\begin{proof}
It is clear that $K$ has a basis given by $\kappa_i$ with dimension $r - i \ge r' + 1$. 

For $j \ge r' + 1$ and $i \in [0, r']$, we compute
\begin{equation*}
\begin{split}
\Tp (T'_{(j - i, i)})&= \Tp ( (\xi' -h')^{j-i} h'^{i} ) \\
&= \Tp( (\xi'^{j - i}- \cdots +(-1)^{j - i - 1}C^{d-i}_{j - i - 1}\xi' h'^{j - i - 1} +(-1)^{j - i}h'^{j - i}) h'^i)\\
&= \Tp( \xi'^{j - i}h'^i- \cdots +(-1)^{r' - i}C^{j - i}_{r'-i}\xi'^{j - r'}h'^{r'} )\\
&=  \xi^{j - i}(\xi-h)^i - \cdots + (-1)^{r'-i}C^{j - i}_{r'-i}\xi^{j - r'}(\xi-h)^{r'} \\
&=  \xi^{j - i}(\xi-h)^i - \cdots + (-1)^{r'-i}C^{j - i}_{r'-i}\xi^{j - r'}(\xi-h)^{r'}+\\
 & \qquad \qquad \cdots +(-1)^{j - i - 1}C^{j - i}_{j - i - 1}\xi(\xi-h)^{j - 1}\\
&=  (\xi-h)^i(\xi-(\xi-h))^{j - i} - (-1)^{j - i}(\xi-h)^j\\
&=  (\xi-h)^i h^{j - i} - (-1)^{j - i}(-1)^{j - r' - 1}h^{j - r' - 1}(\xi-h)^{r' + 1}\\
&=  h^{j - i} (\xi -h)^i + (-1)^{r' - i} k_{j - (r' + 1)}.
\end{split}
\end{equation*}
If $j \in [r' + 1, r]$, this is orthogonal to $k_{r - j} = h^{r - j}(\xi - h)^{r' + 1}$ by \eqref{e:kappa}.
\end{proof}

Instead of \eqref{e:1}, we will use the elements in Lemma~\ref{l:1} as our basis.

\subsection{Small quantum $\mathscr{D}$-modules via the Picard--Fuchs systems} \label{ss:PF}

The \emph{small quantum cohomology} $Q_0 H(X) = (T_0 H_\mathscr{R}, *_0)$ encodes 3-point invariants by \eqref{e:q-prod}. The fundamental class axiom and divisor axiom show that for $\bt = \hat \bt = t^0 T_0 + D \in H^{\le 2}(X)$ where $D \in H^2(X)$:
$$
\La T_\mu, T_\mu, T^\kappa \Ra (\hat \bt) = \sum_{\beta \in NE(X)} q^\beta e^{D.\beta} \langle T_\mu, T_\mu, T^\kappa \rangle_{3,\beta}.
$$
Thus we may couple together the Novikov variables and the divisor variables and interpret directional derivatives $\p_D$ as derivatives in $q^\beta$'s. The subspace $H^{\le 2}(X)$ is referred as the \emph{small parameter space} and the product $*_{\hat\bt}$ is equivalent to $*_0$. Often we write $*_{small}$ to denote either one of them. The coupled variables are especially suitable for applying (generalized) mirror theorems arising from localization techniques.

For a simple $(r, r')$ flip $f: X \dasharrow X'$, the local models $X$ and $X'$ are both toric manifolds. The small quantum $\mathscr{D}$-modules for toric manifolds are generated by the $I$ function which encodes localization data on stable map moduli spaces. The genus zero Gromov--Witten theory can then be constructed from this $\mathscr{D}$-module $\mathscr{D}^z I$ via the so called BF/GMT procedure. This will be discussed in the next section. 

For the moment we focus on $\mathscr{D}^z I$ and study the corresponding GKZ differential system. In the case of iterated projective bundles, the GKZ system reduces to the Picard--Fuchs system which can be written down easily.

We start with the $X$ side. Let $D = t^1 h + t^2 \xi$ be the divisor variable, $\ell$ and $\gamma$ be the fiber curves for $Z \to {\rm pt}$ and $X \to Z$ respectively. Denote by 
$$
q_1 =q^{\ell} e^{t^1}, \qquad q_2 = q^{\gamma} e^{t^2}
$$ 
the Novikov variables coupled with the ``small parameters'', and $\partial_i =\partial/\partial t^i$. 

The $I$-function is given by 
\begin{equation} \label{e:IX}
\begin{split}
& I = I^X = e^{D/z} \times \\
 &\sum_{d_1, d_2} q_1^{d_1} q_2^{d_2} 
  \frac{1}{\prod_1^{d_1} (h+mz)^{r+1} \prod_1^{d_2-d_1} (\xi-h +mz)^{r'+1} \prod_1^{d_2} (\xi+mz)}
\end{split}
\end{equation}
where $z$ is a formal parameter, and the middle factor goes up as 
\begin{equation} \label{e:up}
(\xi - h)^{r' + 1} \prod_{m = d_2 - d_1 + 1}^{-1} (\xi - h + mz)^{r' + 1} = (-1)^{(r' + 1)(d_1 - d_2 - 1)} k_0 \prod_{m = 1}^{d_1 - d_2 - 1}(h + mz)^{r' + 1}
\end{equation}
when $d_2 - d_1 < 0$. It is annihilated by the following Picard--Fuchs (box) operators
\begin{equation} \label{e:2}
\begin{split}
\Box_{\ell} &:= (\zp_1)^{r+1} - q_1 (\zp_2 -\zp_1)^{r'+1}, \\
\Box_{\gamma} &:= (\zp_2) (\zp_2 -\zp_1)^{r'+1} - q_2,
\end{split}
\end{equation}
The Novikov variables $q^\beta$'s can now be ignored since there is no convergence issue in dealing with equations \eqref{e:2}. Hence we may treat $(q_1, q_2) \in \Bbb C^2$ as variables and identify $\p_i = q_i \p/\p q_i$. 

Since $r > r'$, the PF system for $X$
\[
 \Box_{\ell} I = 0, \qquad \Box_{\gamma} I = 0
\]
is regular holonomic on $\Bbb C^2$ of rank $R$.

On the $X'$ side we have similar notions of $D' = s^1 h' + s^2 \xi'$, $\ell'$, $\gamma'$, 
$$
q_1' = q^{\ell'} e^{s^1}, \qquad q_2' = q^{\gamma'} e^{s^2}
$$ 
and $\p_{i'} = \p/\p{s^i}$. The $I$-function is given by
\begin{equation} \label{e:IX'}
\begin{split}
&I' = I^{X'} = e^{D'/z} \times \\
 &\sum_{d_1', d_2'} q_1'^{d_1} q_2'^{d_2'} 
  \frac{1}{\prod_1^{d_1'} (h' + mz)^{r' + 1} \prod_1^{d_2' - d_1'} (\xi' - h' + mz)^{r + 1} \prod_1^{d_2'} (\xi' + mz)},
\end{split}
\end{equation}
where a similar rule as in \eqref{e:up} applies to the case $d_2' - d_1' < 0$. 

The Picard--Fuchs operators for $X'$ which annihilates $I'$ are  
\begin{equation} \label{e:2'}
\begin{split}
\Box_{\ell'} &:=  (\zp_{1'})^{r' + 1} - q_1' (\zp_{2'} - z\p_{1'})^{r + 1}, \\
\Box_{\gamma'} &:= (\zp_{2'}) (\zp_{2'} - a\p_{1'})^{r + 1} - q_2',
\end{split}
\end{equation}
and the PF system for $X'$ is
\[
 \Box_{\ell'} I' =0, \qquad \Box_{\gamma'} I' =0.
\]

Since $\T(t^1 h + t^2 \xi) = -t^1 h' + (t^1 + t^2) \xi'$, we have 
$$
s^1 = -t^1 \quad \mbox{and}\quad s^2 = t^1 + t^2.
$$ 
Then $\zp_{1'} = \zp_2 - \zp_1$ and $\zp_{2'} = \zp_2$. Also
\begin{equation} \label{e:2.2}
 \T(q_1) = 1/q'_1, \qquad \T(q_2) = q'_1 q'_2.
\end{equation}

\begin{lemma} \label{l:1.2}
The $\mathscr{D}^z$-module defined by the Picard--Fuchs ideal of $X$ is isomorphic to that of $X'$ over
 $\mathbb{C}[q_1, q_1^{-1}, q_2] \cong \mathbb{C}[q'_1, {q'_1}^{-1}, q'_2] $.
\end{lemma}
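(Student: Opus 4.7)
The plan is to verify directly, via the change of variables in Lemma's hypotheses, that the two left ideals $(\Box_\ell, \Box_\gamma)$ and $(\Box_{\ell'}, \Box_{\gamma'})$ inside $\mathscr{D}^z$ (with coefficients in $\mathbb{C}[q_1, q_1^{-1}, q_2]$, equivalently $\mathbb{C}[q'_1, q'^{-1}_1, q'_2]$) coincide. Since the PF ideal determines the $\mathscr{D}^z$-module as $\mathscr{D}^z/(\text{PF ideal})$, this gives the asserted isomorphism.

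First I would record the dictionary. From $s^1 = -t^1$ and $s^2 = t^1 + t^2$ one gets immediately the operator identities
\[
z\p_{1'} = z\p_2 - z\p_1, \qquad z\p_{2'} = z\p_2,
\]
while \eqref{e:2.2} gives $q'_1 = 1/q_1$, $q'_2 = q_1 q_2$. The crucial commutation relations (following from $\p_i = q_i \,\p/\p q_i$) are $[z\p_2, q_1] = 0$ and $[z\p_1, q_2] = 0$.

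Next I would substitute into $\Box_{\ell'}$. Using the dictionary,
\[
\Box_{\ell'} = (z\p_2 - z\p_1)^{r'+1} - q'_1 (z\p_1)^{r+1} = -q'_1\bigl[(z\p_1)^{r+1} - q_1 (z\p_2 - z\p_1)^{r'+1}\bigr] = -q'_1\, \Box_\ell,
\]
so $\Box_{\ell'}$ and $\Box_\ell$ differ only by the unit $-q'_1 = -1/q_1$, placing each in the ideal of the other.

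Then I would handle $\Box_{\gamma'}$. Substitution gives $\Box_{\gamma'} = (z\p_2)(z\p_1)^{r+1} - q'_2$. The key is to rewrite $(z\p_1)^{r+1} = \Box_\ell + q_1(z\p_2 - z\p_1)^{r'+1}$ and pull $q_1$ past $z\p_2$ (using $[z\p_2, q_1] = 0$):
\begin{align*}
\Box_{\gamma'} &= (z\p_2)\Box_\ell + q_1 (z\p_2)(z\p_2 - z\p_1)^{r'+1} - q'_2 \\
&= (z\p_2)\Box_\ell + q_1\bigl[\Box_\gamma + q_2\bigr] - q'_2 \\
&= (z\p_2)\Box_\ell + q_1 \Box_\gamma,
\end{align*}
since $q_1 q_2 = q'_2$. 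Thus $\Box_{\gamma'} \in \mathscr{D}^z \Box_\ell + \mathscr{D}^z \Box_\gamma$. Inverting (using $q_1$ as a unit) gives the reverse inclusion symmetrically: $\Box_\gamma = q'_1 \Box_{\gamma'} + (z\p_2)\Box_{\ell'}$ (after a short computation parallel to the above). This establishes equality of the two PF ideals, and hence the claimed $\mathscr{D}^z$-module isomorphism.

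There is no real obstacle here beyond bookkeeping; the only subtle point is why the localization at $q_1$ (equivalently at $q'_1$) is necessary and sufficient, and this is visible directly from the formula $\Box_{\ell'} = -q'_1\, \Box_\ell$: without inverting $q_1$ one cannot pass from $\Box_{\ell'}$ back to $\Box_\ell$ inside the ideal. All other manipulations are polynomial in $z\p_i$ with coefficients that are integral over $\mathbb{C}[q_1, q_2]$, so no further localization is needed.
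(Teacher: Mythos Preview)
Your proof is correct and follows essentially the same route as the paper: rewrite $\Box_{\ell'}$ and $\Box_{\gamma'}$ in the $X$-variables and express each as a left $\mathscr{D}^z$-combination of $\Box_\ell$ and $\Box_\gamma$. The paper's proof is the one-line version of yours, recording only the two identities (in the paper's notation, $\Box_{\ell'} = q_1^{-1}\Box_\ell$ and $\Box_{\gamma'} = z\p_2\,\Box_\ell - q_1\,\Box_\gamma$, with harmless sign slips relative to your $\Box_{\ell'} = -q_1'\,\Box_\ell$ and $\Box_{\gamma'} = (z\p_2)\,\Box_\ell + q_1\,\Box_\gamma$); your added remarks on the reverse inclusion and on why inverting $q_1$ is exactly what is needed are correct and make the argument self-contained.
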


\begin{proof}
The operators in \eqref{e:2'} for $X'$, written in the variables on $X$, are  
\begin{equation} \label{e:2''}
\begin{split}
\Box_{\ell'} &:=  (\zp_2 -\zp_1)^{r' + 1} - q_1' (\zp_1)^{r + 1}, \\
\Box_{\gamma'} &:= (\zp_2) (\zp_1)^{r + 1} - q_2'.
\end{split}
\end{equation}
Hence $\Box_{\ell'} = q_1^{-1} \Box_{\ell}$ and $\Box_{\gamma'} = z\p_2 \Box_{\ell} - q_1 \Box_{\gamma}$. 
\end{proof}

However, the behavior of the PF system on $X'$ is bad. Since $r > r'$, the expression of $\Box_{\ell'}$ in \eqref{e:2'} shows that it has an irregular singularity at $q_1' = 0$. This is also reflected by the analytic behavior of the $I$ functions:

\begin{lemma}
On the $X$ side, the function $I^X$ is an entire function in $q_1$, while on the $X'$ side the function $I^{X'}$ is divergent, hence only formal, in $q'_1$.
\end{lemma}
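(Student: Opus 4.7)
The plan is to estimate the asymptotic size of the coefficient of $q_1^{d_1}q_2^{d_2}$ in \eqref{e:IX} (resp.~of $q_1'^{d_1'}q_2'^{d_2'}$ in \eqref{e:IX'}) as $d_1\to\infty$ with $d_2$ fixed (resp.~$d_1'\to\infty$ with $d_2'$ fixed), showing factorial decay in the $X$ case and factorial growth in the $X'$ case. Since the $I$-function takes values in the finite-dimensional ring $H(X)$ (resp.~$H(X')$), convergence is tested component by component in a fixed basis.

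First I would observe that the classes $h, \xi-h, \xi$ are nilpotent, so each factor $(h+mz)^{-(r+1)}$ is a finite geometric-series expansion whose \emph{leading coefficient} in $1/m$ is $(mz)^{-(r+1)}$, and similarly for the other two factors; the sub-leading terms are of strictly lower order in $m$. Hence, for $X$ with $d_2$ fixed and $d_1\gg d_2$, the leading magnitude of the $X$-coefficient is controlled by
\[
\frac{1}{(d_1!)^{r+1}\,d_2!}\,\cdot\,(d_1-d_2-1)!^{\,r'+1}
\;\sim\;\frac{1}{(d_1!)^{\,r-r'}}=\frac{1}{(d_1!)^d},
\]
where the numerator factor $(d_1-d_2-1)!^{r'+1}$ comes from the flipping rule \eqref{e:up} applied to $\prod_1^{d_2-d_1}(\xi-h+mz)^{r'+1}$ when $d_2-d_1<0$. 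Since $d=r-r'>0$, this is factorial decay in $d_1$, uniformly in the (finitely many) basis components. A parallel analysis as $d_2\to\infty$ with $d_1$ fixed gives decay $\sim (d_2!)^{-(r'+2)}$. Hence $I^X$ converges on all of $\Bbb C^2$ and is entire in $q_1$ (and in $q_2$).

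For $X'$, the roles of $r$ and $r'$ in the denominator exponents of \eqref{e:IX'} are exchanged. Repeating the same leading-term count for $d_2'$ fixed and $d_1'\to\infty$ now yields
\[
\frac{(d_1'-d_2'-1)!^{\,r+1}}{(d_1'!)^{\,r'+1}\,d_2'!}\;\sim\;(d_1'!)^{\,r-r'}=(d_1'!)^d,
\]
which is factorial growth in $d_1'$. The series therefore diverges for every $q_1'\neq 0$ and $I^{X'}$ is only formal in $q_1'$. (Convergence in $q_2'$ is unaffected and can be checked by the same estimate for $d_2'\to\infty$ with $d_1'$ fixed.)

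The only real subtlety is the book-keeping of the flipping rule \eqref{e:up} when $d_2-d_1<0$: one has to track that the leading $1/m$ behavior of each inverted factor becomes a leading $m$ behavior after flipping, and that the finitely many nilpotent correction terms coming from expanding $(h+mz)^{-1}$ etc.~in $H(X)$ produce only sub-leading perturbations which do not affect the factorial magnitude. Once this is formalized, the lemma follows at once from comparing the exponents $r-r'$ and $-(r-r')$ appearing in the two leading asymptotics.
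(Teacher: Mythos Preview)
Your proposal is correct and takes essentially the same approach as the paper: both argue by estimating the factorial behavior of the coefficient of $q_1^{d_1}$ (resp.~$q_1'^{d_1'}$) as $d_1\to\infty$ with $d_2$ fixed, using the flipping rule \eqref{e:up} to track the numerator versus denominator contributions, and then invoke the ratio test. Your version is simply more explicit, tracking the precise exponent $(d_1!)^{\pm d}$ and spelling out why the nilpotent corrections from expanding $(h+mz)^{-1}$ etc.\ do not affect the leading asymptotics, whereas the paper merely observes that a $(d_1)!$ factor sits in the denominator for $X$ and in the numerator for $X'$.
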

\begin{proof}
The convergence radii can be easily deduced from the explicit formulae above. For $X$, \eqref{e:IX} shows that when $d_1$ is large (with $d_2$ fixed) there is a $(d_1)!$ factor appearing in the denominator of the coefficient of $q_1^{d_1}$. On the other hand, for $X'$ with large $d_1'$ the $(d_1')!$ factor appears in the numerator. The lemma then follows from the ratio test of convergence.
\end{proof}

\begin{remark} \label{r:formal}
In principle we may still go from the PF system for $X'$ to get a \emph{formally regular system} with coefficients in formal series by working on the completion of $\Bbb C [\![NE(X')]\!]$ and by applying \eqref{e:2'} inductively. 
\end{remark}

\subsection{The first order linear system} \label{ss:1st}

In general, a \emph{quantized version} of the basis given in Lemma~\ref{l:1} allows us to rewrite the higher order PDEs (PF system on $X$) in terms of systems of first order PDE's (cf.~\cite{Guest, LLW-II})
\begin{equation} \label{e:4}
\begin{split}
 (\zp_i - C_i (q_1, q_2, z))S = 0, \qquad i = 1, 2.
\end{split}
\end{equation}
such that the matrices $C_i$'s are power-series in $q_1, q_2$ and $z$. Here $S$ is the $R \times R$ fundamental solution matrix. In the one variable case this is the standard process to transform an $n$-th order scalar ODE to a first order system.

In the current local case, we have $c_1(X) = (r - r') h + (r'+2)\xi$ and $X$ is a Fano manifold. Then $C_i$'s are indeed polynomials in $q_1, q_2$ and $z$. 

\begin{remark}
From \eqref{e:IX}, and \eqref{e:up}, the $z$ degree for $\beta = d_1 \ell + d_2 \gamma \in NE(X)$ is given by
$$
\begin{cases}
-((r - r') d_1 + (r'+2)d_2) & \mbox{if $d_2 \ge d_1$},\\ -((r - r') d_1 + (r'+2)d_2) - (r' + 1) & \mbox{if $d_2 < d_1$},
\end{cases}
$$
which is $\le -3$ in all cases. Hence $I = J_{small}$, the small $J$ function on $X$, and no mirror transform is needed. However, for the full matrix system \eqref{e:4} there could still be non-trivial $z$-dependence if the frame (quantized basis) is chosen incorrectly. 
\end{remark}

For $X'$, $c_1(X') = -(r - r') h' + (r+2)\xi'$ contains both positive and negative directions and the situation is necessarily complicated as explained in Remark \ref{r:formal}. 

The precise determination of $C_i$'s will be achieved in the next sections. Here we list only the basic properties of them on $X$.

\begin{lemma} \label{l:1.5}
As a scalar function in $q_1$ and $q_2$, each entry of $C_i$ is sub-linear.
Indeed, the entries of $C_i$ can be written as linear combinations of $1$, $q_i$ and $q_1 q_2$.
\end{lemma}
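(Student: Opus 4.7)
The plan is to work in the classical monomial basis $\{h^i \xi^j : 0 \le i \le r,\ 0 \le j \le r'+1\}$ of $H(X) = \mathbb{Z}[h,\xi]/(h^{r+1},\xi(\xi-h)^{r'+1})$, which is a flat frame for the small Dubrovin connection. In this frame the matrices $C_1$ and $C_2$ coincide with the matrices of small-quantum multiplication by $h$ and $\xi$, respectively, and the Batyrev presentation of small quantum cohomology furnishes the two defining relations
\[
h^{r+1} = q_1 (\xi - h)^{r'+1}, \qquad \xi(\xi - h)^{r'+1} = q_2.
\]
Computing $C_1$ and $C_2$ entry-by-entry thus reduces to repeated application of these two relations and careful bookkeeping of the $q$-monomials produced. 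Any additional $z$-dependence arising in an alternative ``quantized'' frame is a Birkhoff-factorization artifact and does not affect the set of monomials $\{1,q_i,q_1q_2\}$ that one needs to span.

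For $C_1$, the classical product $h \ast (h^i \xi^j) = h^{i+1}\xi^j$ is already a basis element when $i < r$, contributing an entry equal to $1$. When $i = r$ and $j = 0$, the first relation together with the binomial expansion of $(\xi - h)^{r'+1}$ writes the product as a $\mathbb{C}$-linear combination of basis elements with overall factor $q_1$. When $i = r$ and $j \ge 1$, commuting $\xi^j$ through and then using the second relation collapses the computation to a single basis element:
\[
h^{r+1}\xi^j \;=\; q_1(\xi-h)^{r'+1}\xi^j \;=\; q_1 \,\xi^{j-1}\bigl[\xi(\xi-h)^{r'+1}\bigr] \;=\; q_1 q_2\, \xi^{j-1}.
\]
Hence every entry of $C_1$ lies in the $\mathbb{C}$-span of $\{1, q_1, q_1 q_2\}$.

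For $C_2$, the product $\xi \ast (h^i\xi^j) = h^i \xi^{j+1}$ is a basis element when $j < r'+1$. For $j = r'+1$, I solve the second relation for $\xi^{r'+2}$ and expand: this yields $q_2$ plus integer combinations of the basis monomials $h^{r'+1-k}\xi^{k+1}$. Multiplying by $h^i$ may push the $h$-degree above $r$; whenever it does, the identity $h^{r+1}\xi^{k+1} = q_1 q_2\, \xi^k$, derived exactly as in the $C_1$ computation, substitutes the offending factor without introducing any new $q$-monomial. Every entry of $C_2$ therefore lies in the $\mathbb{C}$-span of $\{1, q_2, q_1 q_2\}$.

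The main obstacle is excluding higher-order monomials such as $q_1^2$, $q_2^2$, or $q_1 q_2^2$, which could in principle arise if the Batyrev relations needed to be iterated inside a single quantum multiplication. The Fano hypothesis $r > r'$ is precisely what prevents this: the first relation trades a degree-$(r+1)$ monomial in $h$ for one of $h$-degree at most $r'+1 \le r$, so no second application of it is triggered by multiplying by a divisor, and the second relation is used at most once because it reduces $\xi^{r'+2}$ in a single step. This combinatorial sub-linearity is exactly the content of the lemma, and it prefigures the explicit closed-form presentation of $C_i$ developed in the subsequent sections.
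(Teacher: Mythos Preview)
Your argument is correct in substance and reaches the same conclusion, but it takes a different route from the paper and glosses over one point that deserves care.

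The paper's proof works directly with the Picard--Fuchs operators \eqref{e:2} and the \emph{quantized} frame $\{\hat h^i(\hat\xi-\hat h)^j I\}$: it observes that a factor of $q_k$ can only enter when the $k$-th box relation is invoked in the course of reducing $z\partial_i$ applied to a frame element, and then argues that in a single divisor multiplication $\Box_\ell$ can fire at most once (since it trades $h$-degree $r+1$ for $h$-degree $\le r'+1 < r+1$), after which at most one application of $\Box_\gamma$ is triggered. This yields the monomial list $\{1, q_i, q_1 q_2\}$ with no appeal to Batyrev's theorem.

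You instead invoke the Batyrev presentation of the small quantum ring and compute the multiplication matrices $h*$ and $\xi*$ in the constant frame $\{h^i\xi^j\}$. That computation is clean and your reductions are correct (in particular the identity $h^{r+1}\xi^j = q_1 q_2\,\xi^{j-1}$ for $j\ge 1$ is the key shortcut). What this literally establishes is the lemma for the Dubrovin matrices $A_i$, which are $z$-free by construction. The paper's $C_i$, however, are the connection matrices of the first-order PF system in a quantized frame and are a priori $z$-dependent; your one-line dismissal of this as a ``Birkhoff-factorization artifact'' is the weak link. The honest bridge is that the only source of $z$ in $C_i$ is the commutation $(z\partial_i)^j q_i = q_i(z\partial_i+z)^j$, which reshuffles $z$-coefficients but never produces a new $q$-monomial; equivalently, the Batyrev relations are the symbol ($z\to 0$) of the PF relations, and the counting of how many times each relation is used is identical in both pictures. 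Once that is said, your approach and the paper's are two sides of the same coin: yours is more concrete and ring-theoretic, the paper's is more self-contained and stays within the $\mathscr{D}^z$-module framework that the surrounding sections rely on.
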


\begin{proof}
The only time $q_i$ occurs is when one uses the $i$-th equation in \eqref{e:2}. The first relation, which involves $(\zp_1)^{r + 1}$, can be used at most once. For $q_2$, the worst case is when one uses the first equation in computing
$$
z\p_1 \Big( (z\p_1)^r (z\p_2 - z\p_1)^j I\Big), \qquad j \in [0, r' + 1],
$$
which gives a factor of $q_1 (\zp_2-\zp_1)^{r' + 1}$ to the right of $(\zp_2 - \zp_1)^j$.
One moment's thought concludes that each final resulting monomial can be at most linear in $q_1 q_2$.
\end{proof}

\begin{corollary} 
The  system \eqref{e:4} is regular singular at $q_1=0$ and irregular singular of Poincar\'e rank 1 at $q_1 = \infty$. It is ordinary at any other value of $q_1$.
\end{corollary}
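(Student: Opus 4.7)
The plan is to unwind the definitions of regular/irregular singularity for the one-parameter family obtained from \eqref{e:4} by freezing $q_2$ (and $z$), and to read off the type of each singularity directly from the explicit polynomial form of $C_1$ granted by Lemma \ref{l:1.5}.

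First I would rewrite the $i=1$ equation of \eqref{e:4} in the variable $q_1$: since $\zp_1 = q_1\,\partial/\partial q_1$, the system becomes
\[
\frac{\partial S}{\partial q_1} \;=\; \frac{1}{q_1}\,C_1(q_1, q_2, z)\,S.
\]
By Lemma \ref{l:1.5}, we may write
\[
C_1(q_1, q_2, z) \;=\; A_0(q_2, z) \,+\, q_1\,A_1(q_2, z) \,+\, q_1 q_2\, A_2(q_2, z),
\]
for matrices $A_0, A_1, A_2$ independent of $q_1$. Thus $q_1^{-1} C_1 = q_1^{-1} A_0 + A_1 + q_2 A_2$ has at worst a simple pole at $q_1 = 0$, so the singularity there is regular singular (of Poincar\'e rank $0$), and is holomorphic at every $q_1 \in \mathbb{C}^\times$, which gives the ordinary points.

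For the behavior at $q_1 = \infty$, I would change variables to $w = 1/q_1$, so that $\zp_1 = -\,w\,\partial/\partial w = -\,\zp_w$. Substituting and rearranging, the equation $(\zp_1 - C_1)S = 0$ transforms into
\[
\zp_w S \;=\; -\bigl(A_0 + w^{-1}(A_1 + q_2 A_2)\bigr)\,S,
\]
and hence
\[
\frac{\partial S}{\partial w} \;=\; -\Bigl(w^{-1} A_0 + w^{-2}(A_1 + q_2 A_2)\Bigr)\,S.
\]
The coefficient matrix has a pole of order exactly $2$ at $w = 0$ (i.e.~$q_1 = \infty$), since by \eqref{e:2} the operator $\Box_\ell$ contains the nontrivial quantum correction term $-q_1(\zp_2-\zp_1)^{r'+1}$, which forces $A_1 + q_2 A_2$ to be nonzero. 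By the standard definition this is an irregular singularity of Poincar\'e rank $1$.

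The only step requiring care is confirming the actual pole order at infinity, namely that the coefficient of $w^{-2}$ does not vanish identically. This is where the structural information from \eqref{e:2} is used, as the quantum piece $q_1(\zp_2-\zp_1)^{r'+1}$ in $\Box_\ell$ ensures a genuine contribution at the highest order in $q_1$ after one rewrites the PF system as a first order one. All remaining assertions are immediate from the polynomial shape of $C_1$.
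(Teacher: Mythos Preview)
Your proof is correct and follows essentially the same line as the paper's: both use the logarithmic form $\partial_i = q_i\,\partial_{q_i}$ together with Lemma~\ref{l:1.5}'s sublinearity to read off the pole orders at $q_1=0$ and $q_1=\infty$. The only cosmetic difference is that the paper passes to the natural coordinates $(q_1',q_2')=(1/q_1,\,q_1q_2)$ from \eqref{e:2.2}, in which the monomial $q_1q_2$ becomes the regular variable $q_2'$ and only the bare $q_1$ term survives as the source of irregularity, whereas you freeze $q_2$ and change only $w=1/q_1$; you are also a bit more explicit than the paper in verifying that the $w^{-2}$ coefficient is genuinely nonzero.
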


\begin{proof}
Note that $\partial_i = q_i \partial_{q_i}$. Therefore \eqref{e:4} is regular singular at $q_1 = 0$. From \eqref{e:2.2}, $\T (q_1 q_2) = q_2'$ and the (sub)-linearity guarantee that it is no worse than rank 1 irregular singularity at $q_1' = 0$ (that is $q_1 = \infty$).
\end{proof}

Note that 
\begin{equation} \label{e:5}
 \deg q_1 = r - r' = d, \quad \deg q_2 = r' + 2, \quad \deg z =1.
\end{equation}
Each entry of $C_i$ is then homogeneous in the following sense. Consider the $(i, j)$-th entry of $C_k$. Let $d_i$ be the degree of the $i$-th basis element.
Then 
\begin{lemma} [Homogeneity] \label{l:weight}
\[
 \deg (C_k)_{ij} = d_j - d_i + 1.
\]
Consequently, the highest degree of any entry is $r+r'+2$. In fact only the $(1,R)$-th one has this degree.
\end{lemma}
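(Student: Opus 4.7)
The plan is to introduce a $\mathbb{C}^\times$-grading compatible with every ingredient in the construction of \eqref{e:4} and to extract the weight formula from it. Assign weights $\deg h = \deg \xi = \deg z = 1$, $\deg q_1 = d = r - r'$ and $\deg q_2 = r' + 2$, chosen so that $\deg q_i = c_1(X) \cdot \beta_i$ for $\beta_1 = \ell$, $\beta_2 = \gamma$. A direct monomial check on \eqref{e:IX} shows that each $(d_1, d_2)$-summand of $I$ is weight-homogeneous of weight $0$, since
$$
d \cdot d_1 + (r'+2)\, d_2 - \bigl[(r+1)\, d_1 + (r'+1)(d_2 - d_1) + d_2\bigr] = (d - r + r')\, d_1 = 0;
$$
and the replacement \eqref{e:up} used in the regime $d_2 < d_1$ is itself weight-preserving, so the same vanishing holds there. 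The prefactor $e^{D/z}$ is weight-zero because $\deg D = \deg z$. The Picard--Fuchs operators $\Box_\ell$ and $\Box_\gamma$ in \eqref{e:2} are manifestly weight-homogeneous of weights $r+1$ and $r'+2$, so the Picard--Fuchs ideal $\mathscr{D}^z(\Box_\ell, \Box_\gamma)$ is a graded ideal.

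The first-order system \eqref{e:4} arises from a quantized frame $\{\tilde T_\alpha = T_\alpha + O(q)\}$ obtained by reducing differential expressions $P_\alpha(z\partial_1, z\partial_2; q_1, q_2)\, I$ modulo the PF ideal. Because the ideal is graded and the classical limit $T_\alpha$ is weight-homogeneous of weight $d_\alpha = \deg T_\alpha$ (its cohomological degree), the reduction can be performed while preserving weights, so each $\tilde T_\alpha$ is weight-homogeneous of weight $d_\alpha$. Equivalently, the $\alpha$-th column of the fundamental solution $S$ carries weight $d_\alpha$ and its $\beta$-th row contributes $-d_\beta$. Reading off $z\partial_k S = C_k S$ at the $(i,j)$-entry: the weight of the LHS is $(d_j - d_i) + 1$, with the $+1$ coming from $z\partial_k$, while each summand $(C_k)_{ic}\, S_{cj}$ on the RHS has weight $\deg(C_k)_{ic} + (d_j - d_c)$. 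Matching weights yields $\deg(C_k)_{ij} = d_j - d_i + 1$.

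For the consequence, since $d_\alpha \in [0, r + r' + 1] = [0, \dim X]$, the maximum of $d_j - d_i + 1$ is $r + r' + 2$, attained exactly when $d_j = \dim X$ and $d_i = 0$. The unit $T_1 = 1 \in H^0(X)$ is the unique basis element of degree $0$ in the basis of Lemma~\ref{l:1}, and the top class $h^r(\xi - h)^{r'+1} = T_R \in H^{2\dim X}(X)$ is the unique one of top degree, so the maximal weight $r+r'+2$ is attainable only at the $(1,R)$-entry.

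The main obstacle is the middle paragraph: justifying that the quantized frame can be chosen weight-compatibly. This will be transparent once the explicit frame is written down in the subsequent sections, where each $P_\alpha$ is naturally built as a weight-$d_\alpha$ operator on the graded PF ideal quotient; the present lemma records the clean abstract payoff of that compatibility.
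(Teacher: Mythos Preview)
Your proposal is correct and follows essentially the same approach as the paper: the paper's entire proof is the single sentence ``$z\partial_k$ increases degree by $1$ and $(C_k)_{ij}$ sends the $j$-th element to the $i$-th element,'' which is exactly the weight-matching you carry out, only stated telegraphically. Your explicit verification that $I$ and the Picard--Fuchs ideal are graded, and your honest acknowledgement that the homogeneity of the quantized frame is only fully justified once the explicit frame is written down (Definition~\ref{d:q-frame} and \S\ref{ss:explicit}), are fair elaborations of what the paper leaves implicit. One small caution on the consequence: your identification of the $(1,R)$-entry as the unique maximal one tacitly assumes the basis is ordered with the unit first and the top class last; this is the intended convention at this point in the paper (cf.~\eqref{e:1}), though later orderings (e.g.\ \eqref{basis-21}, where $K_1$ is placed last) differ.
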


\begin{proof}
$\zp_k$ increases degree by $1$ and $(C_k)_{ij}$ sends the $j$-th element to the $i$-th element.
\end{proof}

\begin{definition} [Hopf--M\"obius stripe]
By \eqref{e:2.2}, the parameter space $M$ where $(q_1,q_2)$ lies is identified with the total space of $\so_{P^1}(-1)$, which will be called the \emph{Hopf--M\"obius strip}, or the \emph{$q^\ell$-compactified K\"ahler moduli}. 
\end{definition}

We rephrase Lemma~\ref{l:1.5} as follows.

\begin{corollary} \label{c:1.8}
The Picard--Fuchs system defines a meromorphic connection, with parameter $z$, on a trivial rank $R$ vector bundle over the Hopf--M\"obius stripe $M = \so_{P^1}(-1) \to P^1$, with $q_1 = 1/q'_1$ being the coordinate of the base $P^1$.

The connection is regular singular along the divisor $q_1=0$ and irregular singular of Poincar\'e rank 1 along $q'_1=0$.
Furthermore, the irregular singularity does not occur in the differentiation in the fiber direction $q_2$.
\end{corollary}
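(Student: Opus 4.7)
The plan is to read off the three claimed singular behaviors directly from the monomial description of the connection matrices in Lemma \ref{l:1.5}, by comparing the two standard charts of the Hopf--M\"obius strip $M = \mathscr{O}_{P^1}(-1) \to P^1$ glued by the transition \eqref{e:2.2}. On the chart with coordinate $(q_1, q_2)$, I would view \eqref{e:4} as furnishing the meromorphic connection
$$
\nabla = d - \frac{1}{z}\bigl( C_1\, dt^1 + C_2 \, dt^2 \bigr)
$$
on the trivial rank $R$ bundle, where $dt^i = dq_i/q_i$. Since Lemma \ref{l:1.5} shows each entry of $C_i$ is a polynomial in $(q_1, q_2)$, the only poles of $\nabla$ on this chart are the logarithmic ones contributed by $dt^1$ and $dt^2$. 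This gives the regular singularity along $q_1 = 0$ and ordinariness wherever $q_1 \ne 0$, handling two of the three assertions.

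For the behavior along $q_1' = 0$, I would pass to the other chart via the divisorial change $t^{1'} = -t^1$, $t^{2'} = t^1 + t^2$, so that $dt^1 = -dt^{1'}$ and $dt^2 = dt^{1'} + dt^{2'}$. The connection 1-form rearranges into
$$
-\frac{1}{z}\Bigl( (C_2 - C_1)\, dt^{1'} + C_2 \, dt^{2'} \Bigr),
$$
so the new connection matrices are $C_{1'} = C_2 - C_1$ and $C_{2'} = C_2$. Plugging $q_1 = 1/q_1'$, $q_2 = q_1' q_2'$, and $q_1 q_2 = q_2'$ into the distinguished basis $\{1, q_i, q_1 q_2\}$ of Lemma \ref{l:1.5}, I would observe that the entries of $C_{2'} = C_2$ become $\mathbb{C}$-combinations of $\{1,\, q_1' q_2',\, q_2'\}$, all of which are holomorphic at $q_1' = 0$; so the fiber direction contributes only the log pole along $q_2' = 0$ and stays ordinary at $q_1' = 0$. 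The entries of $C_{1'}$, on the other hand, inherit a $1/q_1'$ contribution from the $q_1$ monomial appearing in $C_1$, and this produces a coefficient of order $1/q_1'^2$ against $dq_1'$ --- precisely a Poincar\'e rank one irregular singularity.

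The real bookkeeping obstacle is the asymmetric monomial content between $C_1$ and $C_2$ in Lemma \ref{l:1.5}: that $C_2$ is a linear combination of $\{1, q_2, q_1 q_2\}$ and crucially \emph{does not} contain the pure monomial $q_1$. This absence is exactly what prevents $C_{2'} = C_2$ from developing a $1/q_1'$ pole, and thus what makes the irregularity purely a base-direction phenomenon. Once this asymmetry is confirmed from \eqref{e:2} by tracking which box operator introduces which variable (as in the proof of Lemma \ref{l:1.5}), the corollary follows at once from the two coordinate computations above.
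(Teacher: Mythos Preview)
Your proof is correct and follows essentially the same approach as the paper: the corollary is presented there as a direct rephrasing of Lemma~\ref{l:1.5} (combined with the preceding short corollary on the singularity type), and your argument simply spells out in detail the coordinate change \eqref{e:2.2} and the asymmetry in the monomial basis $\{1, q_i, q_1 q_2\}$ that the paper leaves implicit. In particular, your observation that $C_2$ contains no pure $q_1$ monomial---so that $C_{2'} = C_2$ remains holomorphic at $q_1' = 0$---is exactly the content of the subscript ``$i$'' in Lemma~\ref{l:1.5}'s statement.
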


%The following is a proposal????
%Take $C_1/q_1$ and get its asymptotic expansion at $q_1 = \infty$ (i.e., $q'_1=0$).
%This should define the system whose solutions includes $\nabla I_{X'}$.

%\begin{question}
%Is Birkhoff-factorization (and generalized mirror transformation if necessary) compatible with the above procedure?
%\end{question}

\section{The GW system for $Q_0 H$} \label{s:GW}

The (small) Dubrovin connection of $X$, which is a toric Fano manifold, can be written down directly by choosing the quantum frame carefully (cf.~Definition \ref{d:q-frame}). This gives the Gromov--Witten invariants for two-point primary invariants without starting at the one-point descendent $J$ function. 

Since the explicit form of the Dubrovin connection is not strictly necessary, we choose to work in a slightly more theoretic manner which is precise enough to study the eigenvalue functions of $h*_{small}$ and $\xi*_{small}$ and to identify the bundle directions leading to irregular singularities near $q^\ell = \infty$, namely the kernel space $K$ (cf.~Lemma \ref{l:K}).

\subsection{Abstract structures of $QH$} \label{ss:abs}

In order to deal with Dubrovin connection in a non-constant frame, which is essential in our proof, we recall some standard structures attached to the quantum cohomology rings. 

The following is well-known

\begin{lemma} \label{metrical}
The Dubrovin connection $\nabla^z := d - z^{-1}\sum_i dt^i \otimes T_i*$ is compatible with the polarized (or Hermitian) pairing: for $H$-valued Laurent series $a(z), b(z)$, 
\begin{equation} \label{e:pairing}
\Lb a(z), b(z) \Rb := (a(z), \overline{b(z)}) := (a(z), b(-z)).
\end{equation}
\end{lemma}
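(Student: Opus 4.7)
The plan is to verify the Leibniz-type identity
$$
d \,\Lb a(z), b(z) \Rb = \Lb \nabla^z a, b \Rb + \Lb a, \nabla^z b \Rb
$$
by unwinding the definition of the pairing and reducing to the associativity of the Poincar\'e pairing with respect to the quantum product. I decompose $\nabla^z = d - z^{-1} A$ with $A = \sum_\mu dt^\mu \otimes T_\mu*\,$, recalling from \S\ref{ss:str-QH} that each $T_\mu *$ is $z$-independent. Applying the ordinary Leibniz rule for $d$ to the Poincar\'e pairing $(a(z), b(-z))$ already produces $(da,\overline{b}) + (a, d\overline{b})$, so the exterior-derivative parts of the two sides automatically match.

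Next I handle the $z^{-1}$-terms. The overline is $\overline{b}(z) := b(-z)$, so in $\nabla^z_\mu b = \partial_\mu b - z^{-1}T_\mu*b$ the substitution $z \mapsto -z$ produces $\overline{\nabla^z_\mu b} = \partial_\mu \overline{b} + z^{-1}T_\mu * \overline{b}$; the sign flip here is what gets balanced against the $-z^{-1}$ term already present in $\nabla^z_\mu a$. Collecting all $z^{-1}$-terms in $\Lb \nabla^z_\mu a, b \Rb + \Lb a, \nabla^z_\mu b \Rb$ yields
$$
z^{-1}\bigl[-(T_\mu * a,\overline{b}) + (a, T_\mu * \overline{b})\bigr],
$$
so compatibility reduces to the identity $(T_\mu * u, v) = (u, T_\mu * v)$ for arbitrary $u, v \in H_\mathscr{R}$.

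Finally, I would invoke the Frobenius (associativity) property of the quantum product: since
$$
(T_\mu *_\bt T_\nu, T_\kappa) = \partial_\mu \partial_\nu \partial_\kappa F(\bt)
$$
by the definition \eqref{e:q-prod}, the 3-tensor is totally symmetric in its indices, which is exactly the statement $(T_\mu * u, v) = (u, T_\mu * v)$ after extending linearly. This kills the bracketed term above, completing the verification. There is no real obstacle; the only subtlety worth flagging is the careful bookkeeping of the overline: because $\overline{\,\cdot\,}$ reverses the sign of $z$, the sign inherited by $\overline{-z^{-1}T_\mu * b}$ is exactly the one that makes the Frobenius symmetry usable.
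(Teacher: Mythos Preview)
Your proof is correct and follows essentially the same approach as the paper's: both expand $\Lb \nabla^z_\mu a, b \Rb + \Lb a, \nabla^z_\mu b \Rb$, match the derivative terms via Leibniz, observe that the sign flip $z \mapsto -z$ in the overline turns the second $-z^{-1}$ into $+z^{-1}$, and cancel the remaining bracket using the Frobenius property $(T_\mu * u, v) = (u, T_\mu * v)$. Your write-up is slightly more explicit in deriving that Frobenius identity from the total symmetry of $\partial_\mu\partial_\nu\partial_\kappa F$, which the paper simply cites.
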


\begin{proof}
On one hand
$$
\theta := \p_i (\sum a_j T_j, \sum \bar b_k T_k) = \sum ((\p_i a_j) \bar b_k + a_j (\p_i \bar b_k)) (T_j, T_k)
$$
On the other hand,
$$
(\nabla^z_i a, \bar b) + (a, \overline{\nabla^z_i b}) = \theta + \sum  a_j\bar b_k \Big(\frac{-1}{z} (T_i*T_j, T_k) + \frac{1}{z} (T_j, T_i*T_k) \Big) = \theta,
$$
where the Frobenius property is used.
\end{proof}

The Dubrovin connection $z\nabla^z$ on any constant frame $T_i$'s (cohomology basis) has its 0-th order operator the matrix $A_k$ of $T_k*$, and has the quantum differential equation $z\p_k z\p_j J = \sum_i (A_k)_j^i z\p_i J$. Hence the fundamental solution matrix $S = (S_j) = (S_j^i)$ with $z\p_k S = A_k S$ is determined by the adjoint relation
$$
(\sum\nolimits_i S_j^i T_i, T_k) = (T_j, z\p_k J).
$$ 
That is,
$$
S^i_j = (T_j, \sum\nolimits_k g^{ik} z\p_k J) = \sum\nolimits_{k, l} g^{ik} g_{jl}\, z\p_k J^l.
$$

In terms of a non-constant frame $\tilde T_j = \sum_i T_i\, p_j^i (q, \bt, z)$ as power series in $q, \bt$ and $z$, the corresponding fundamental solution matrix $Z$ satisfies $S = PZ$ with $P = (p_j^i)$, and the equation becomes $z\p_k Z = \tilde A_k(q, \bt, z) Z$ with
\begin{equation} \label{e:tAk}
\tilde A_k = -zP^{-1}\p_k P + P^{-1} A_k P.
\end{equation}

\begin{remark}
Even if $P$, or equivalently $\tilde T_i$'s, is independent of $z$, the connection matrices $\tilde A_k$ might still be $z$-dependent if $P(q, \bt)$ is not constant in $(q, \bt)$. On the other hand, for a (formal) change of variables $(q, \bt) \mapsto (\tilde q, \tilde \bt)$ we get a linear change 
\begin{equation} \label{e:CV}
A_k \mapsto \tilde A_l = \sum_k A_k (\p t^k/\p \tilde t^l)
\end{equation} 
which is $z$-independent if $A_k$'s are. When both operations are performed the connection matrices $\tilde A_k(\tilde q, \tilde \bt, z)$'s are usually complicated.
\begin{itemize}
\item[(1)]
A typical case for this to occur is the connection matrix obtained from the $I$ function. In that case one uses Birkhoff factorization (BF) to recover the frame $\tilde T_j$ to get $z$-independent connection matrices and use generalized mirror transform (GMT) to recover the change of coordinates if any. This is discussed in \S \ref{s:small}.
\item[(2)]
The block diagonalization/decomposition of connections gives another instance of this construction. This is discussed in \S \ref{ss:decomp}.
\end{itemize}
We will study non-constant frames arising from combinations of these two ``gauge transformations''.
\end{remark}

As a linear map, the matrix of $T_k*$ in the basis (non-constant frame) $\tilde T_j$'s is given by $P^{-1} A_k P$ instead of $\tilde A_k$. Hence 
\begin{equation} \label{e:z}
T_k * \tilde T_j = \sum_i \tilde T_i (\tilde A_k)_j^i  +  \sum_i \tilde T_i (P^{-1}\,z\p_k P)_j^i.
\end{equation}
In particular, on the $\deg z = 0$ component we get
\begin{equation} \label{e:z=0}
T_k * \tilde T_j(0) = \sum_i  \tilde T_i(0) \tilde A_{kj}^i (0).
\end{equation}
In terms of GW invariants we have
$$
\tilde A_{kj}^i (0) = \sum_l \tilde g^{i l}(0) \La T_k, \tilde T_j(0), \tilde T_l(0) \Ra = \La T_k, \tilde T_j(0), \tilde T^i(0) \Ra,
$$ 
where 
\begin{equation} \label{e:dual}
\tilde T^{i} :=  \tilde g^{i l}\, \tilde{T}_l
\end{equation} 
is the dual frame with respect to the polarized pairing 
$$
\tilde g_{i j} := \Lb \tilde T_i, \tilde T_j \Rb.
$$ 
The pairing becomes symmetric when we restrict to $z = 0$.

%If we use the vector field $\tilde T_k = \sum T_l p^l_k$ and denote $\hat T_k Z = \tilde A_k Z$, then 
%$$
%\tilde A_{kj}^i = -(P^{-1})^i_m p^l_k \widehat{T_l} p^m_j + (P^{-1})^i_l A_{n m}^l p^m_j p^n_k.
%$$

\begin{remark} \label{r:z}
While \eqref{e:z=0} holds for any frame $\{\tilde T_j(q, \bt, z)\}$, a special frame such that $\tilde A_k(q, \bt, z)$ is $z$-independent is of fundamental importance. In fact it is unique up to a constant transformation matching the constant basis $\tilde T_j \pmod{NE(X)}$ with the original one. Nevertheless, as we shall see later (cf.~\eqref{e:gauge}), non-trivial non-constant frames with $z$-independent $\tilde A_k$ do exist when we consider analytic continuations in certain $q$ variables toward infinities. Of course in that case $\tilde T_j$ is not defined near $q = 0$.   
\end{remark}

\subsection{Dubrovin connection on $Q_0 H(X)$}

The system defined by the (small) Dubrovin connection of $X$ is
\begin{equation} \label{e:6}
\begin{split}
 (\zp_i - A_i (q_1, q_2))S &= 0, \qquad i = 1, 2,
\end{split}
\end{equation}
where $A_1 = h *_{small}$ and $A_2 = \xi *_{small}$ are the matrices defined by the (small) quantum product. Notice the characteristic feature that $A_i$'s are independent of $z$ by definition.

\begin{lemma} \label{l:Fano}
For Fano $X$, $A_i$'s are polynomial functions in $q_1$ and $q_2$.
\end{lemma}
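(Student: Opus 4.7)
The plan is to combine the virtual dimension constraint for three-point genus-zero Gromov--Witten invariants with the positivity of $c_1(X)$ on the Mori cone, which is precisely the Fano hypothesis. By the defining formula of the small quantum product, each matrix entry of $A_k$ takes the form
$$
(A_k)_\nu^\kappa \;=\; \sum_{\beta \in NE(X)} q^\beta \, \langle T_k, T_\nu, T^\kappa \rangle^X_{0,3,\beta},
$$
with $T_k \in \{h, \xi\}$ and $\{T_\nu\}$ a fixed homogeneous basis of $H(X)$. So it suffices to argue that for each fixed triple $(k, \nu, \kappa)$ only finitely many effective classes $\beta$ contribute.

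First I would identify $NE(X)$. Since $X$ is a projective bundle over $P^r$, its Mori cone is generated by the two extremal classes $\ell$ and $\gamma$, so every effective curve class has the form $\beta = d_1 \ell + d_2 \gamma$ with $d_1, d_2 \in \mathbb{Z}_{\ge 0}$. Next, I would apply the dimension axiom, which together with $\vdim \overline{M}_{0,3}(X, \beta) = \dim X + c_1(X)\cdot\beta$ forces nonvanishing of the invariant only when
$$
c_1(X)\cdot \beta \;=\; |T_k| + |T_\nu| + |T^\kappa| - \dim X,
$$
where $|\cdot|$ denotes complex codimension. The right-hand side is bounded in absolute value by $\dim X + 1$ and is constant once $(k, \nu, \kappa)$ is fixed.

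To finish, I would invoke the Fano hypothesis, which via $c_1(X) = (r - r')h + (r'+2)\xi$ amounts to $c_1(X)\cdot \ell = d = r - r' > 0$ and $c_1(X)\cdot \gamma = r' + 2 > 0$ (cf.~\eqref{e:5}). The linear equation $d\, d_1 + (r'+2)\, d_2 = \text{constant}$ in non-negative integers therefore has only finitely many solutions, so each entry of $A_k$ is a polynomial in $q_1, q_2$. The same counting simultaneously pins down the weighted degree of $(A_k)_\nu^\kappa$ to $|T^\kappa| - |T_\nu| + 1$ in the grading \eqref{e:5}, consistent with Lemma~\ref{l:weight}. The only mild subtlety is the explicit determination of $NE(X)$ from the projective bundle structure; beyond this, positivity of $c_1$ on the Mori cone does all the work and no serious obstacle arises.
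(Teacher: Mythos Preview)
Your proof is correct and follows essentially the same approach as the paper: the paper's two-line proof simply cites \eqref{e:5} (positivity of $\deg q_1$ and $\deg q_2$) and concludes polynomiality, which is exactly the dimension-axiom-plus-Fano argument you have spelled out in detail. Your version makes explicit the Mori cone generators and the virtual-dimension equation that the paper leaves implicit, but there is no substantive difference in strategy.
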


\begin{proof}
By \eqref{e:5} we know that degrees of $q_1$ and $q_2$ are both strictly positive. 
Therefore, we have the polynomiality in $q_j$.
\end{proof}

The (small) Dubrovin connection on $X$ extends meromorphically over the parameter space $M$, with regular singularity on the fiber divisor $q_1 = 0$ and possibly irregular singularity on the fiber $q_1' = 0$. A detailed determination is given in this subsection. 

Before doing so, we first describe the eigenvalue functions $\lambda(q_1, q_2)$ of $h*_{small}$ and $\mu(q_1, q_2)$ of $\xi*_{small}$ in terms of the Picard--Fuchs system \eqref{e:2}. It is important to notice that, since $h$ and $\xi$ are of degree one, by definition the eigenvalue functions are also of degree one. 

Since $X$ is toric Fano, no mirror transformation is needed for small quantum cohomology and we get 
\begin{equation} \label{e:eigen}
\lambda^{r + 1} = q_1 (\mu - \lambda)^{r' + 1}, \qquad \mu (\mu - \lambda)^{r' + 1} = q_2.
\end{equation}
Then we clearly have a simple relation
\begin{equation} \label{e:eigen1}
\mu \lambda^{r + 1} = q_1 q_2,
\end{equation}
and we may use \eqref{e:eigen1} to eliminate $\mu$ in \eqref{e:eigen} to get the equation for $\lambda$:
\begin{equation} \label{e:lambda}
\lambda^{(r + 1)(r' + 2)} = q_1 (q_1 q_2 - \lambda^{r + 2})^{r' + 1}.
\end{equation}
Since $(r + 1)(r' + 2) - (r + 2)(r' + 1) = R - R' = r - r' = d > 0$, all the solutions $\lambda(q_1, q_2)$ are analytic in $q_1, q_2$ as expected. 

It is also clear from \eqref{e:eigen1} and \eqref{e:lambda} that the small quantum product on $X$ is generically semi-simple. Since the semi-simplicity is an open condition, we conclude also the generic semi-simplicity for big quantum product. 

\begin{remark} \label{r:toric}
In \cite{Iritani}, Iritani proved that the big quantum cohomology of any smooth projective toric variety is convergent and generically semi-simple.
\end{remark}

However, under the analytic continuation $x = q_1' = 1/q_1$, $y = q_2' = q_1 q_2$ to the locus $x = 0$, equation \eqref{e:lambda} becomes
\begin{equation} \label{e:lambda-x}
\begin{split}
0 &= x\lambda^{(r + 1)(r' + 2)} - (y - \lambda^{r + 2})^{r' + 1} \\
& = x \lambda^R - (-1)^{r' + 1}\lambda^{R'} - \sum\nolimits_{j = 1}^{r' + 1} (-1)^{r' + 1 - j} C^{r' + 1}_j y^j \lambda^{R' - (r + 2)j}.
\end{split}
\end{equation}
The leading terms $\lambda^{R'}(x \lambda^{d} - (-1)^{r' + 1})$ lead to the following. 

\begin{lemma} \label{l:irr-root}
Near $x = 0$, there are $d = r - r'$ singular eigenvalue functions 
$$
\lambda_i(x^{1/d}, y) = \omega^i x^{-1/d} + \ldots\ 
$$ 
of $h*_{small}$, where $\omega^d = (-1)^{r' + 1}$. The corresponding eigenvalue for $\xi *_{small}$ is 
$$
\mu_i(x^{1/d}, y) = \omega^{-(r + 1)j} x^{(r + 1)/d} y + \ldots.
$$
\end{lemma}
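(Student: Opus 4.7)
The plan is to apply a Newton polygon / dominant-balance argument to the polynomial equation \eqref{e:lambda-x} for $\lambda$ as $x \to 0$, then upgrade the leading asymptotics to a convergent Puiseux expansion via the implicit function theorem.

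Equation \eqref{e:lambda-x} can be rewritten compactly as $x\lambda^R = (y - \lambda^{r+2})^{r'+1}$. Since $R - R' = d > 0$, specializing $x = 0$ yields $\lambda^{r+2} = y$, which for generic $y$ has $r+2$ simple roots, each appearing with multiplicity $r'+1$; these account for $(r+2)(r'+1) = R'$ of the $R$ roots and stay bounded as $x \to 0$. The remaining $d = R - R'$ roots must blow up. To locate their leading behavior, I balance the two highest-degree-in-$\lambda$ monomials on either side, $x\lambda^R$ and $(-\lambda^{r+2})^{r'+1} = (-1)^{r'+1}\lambda^{R'}$; all remaining terms in the expansion of $(y - \lambda^{r+2})^{r'+1}$ carry a positive power of $y$ and a strictly smaller power of $\lambda$, hence are subleading. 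The balance gives $\lambda^d \sim (-1)^{r'+1}/x$, i.e.\ $\lambda \sim \omega^i x^{-1/d}$ with $\omega^d = (-1)^{r'+1}$ and $i = 0, 1, \dots, d-1$.

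To convert this asymptotic into a rigorous expansion, set $u = x^{1/d}$ and substitute $\lambda = \omega^i u^{-1}\rho$ into the equation. Multiplying through by $u^{R'}$ (using $d - R = -R'$ and $(r+2)(r'+1) = R'$) the equation becomes
\[
G(\rho,u,y) \;:=\; \omega^{iR}\rho^R - \bigl(u^{r+2}y - \omega^{i(r+2)}\rho^{r+2}\bigr)^{r'+1} \;=\; 0.
\]
Using $\omega^{id} = (-1)^{r'+1}$ a direct computation gives $G(\rho,0,y) = (-1)^{r'+1}\omega^{iR'}\rho^{R'}(\rho^d - 1)$, independent of $y$. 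Thus $G(1,0,y) = 0$ and $\partial_\rho G(1,0,y)$ is a nonzero multiple of $d$. The implicit function theorem produces a unique analytic $\rho(u,y)$ near the origin with $\rho(0,0) = 1$, and the formula $\lambda_i = \omega^i x^{-1/d}\,\rho(x^{1/d},y)$ yields the desired expansion, with the $d$ distinct branches indexed by the choice of $d$-th root $\omega^i$.

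For the companion eigenvalue $\mu_i$ of $\xi *_{small}$, I multiply the two relations in \eqref{e:eigen} to get $\mu\lambda^{r+1} = q_1 q_2 = y$, see \eqref{e:eigen1}. Hence $\mu_i = y/\lambda_i^{r+1} = \omega^{-i(r+1)} x^{(r+1)/d} y + \cdots$, matching the stated formula (with the understanding that the exponent of $\omega$ in the statement should read $-(r+1)i$). The main obstacle is purely bookkeeping — tracking the exponents of $\omega$, $x^{1/d}$ and $y$ through the substitution — while the conceptual content is just the dominant balance plus one application of the implicit function theorem.
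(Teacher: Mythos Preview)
Your proof is correct and follows the same dominant-balance idea the paper uses: the paper simply notes that the leading terms of \eqref{e:lambda-x} factor as $\lambda^{R'}(x\lambda^d - (-1)^{r'+1})$ and states the lemma without further argument. Your implicit-function-theorem step with the substitution $\lambda = \omega^i u^{-1}\rho$ is a genuine addition of rigor, converting the heuristic balance into a convergent Puiseux expansion; this is more than the paper provides, but entirely in the same spirit.
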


We will see that they correspond to the space of vanishing cycles $K$.

\begin{definition} [Naive quantization frame] (cf.~\cite{LLW-II})
We use the notations of naive quantizations when a cohomology class is represented by a product of divisors in a canonical manner. Namely for any divisor $D$ we set $\hat D = z\p_D$ as a directional derivative, and for a class $a = \prod_i D_i^{e_i}$ under the fixed canonical presentation we set $\hat a = \prod_i \hat D_i^{e_i}$ as a higher order derivative. 

It is easy to see that $\hat a e^{D/z} = a e^{D/z}$ where $D = \sum t^i D_i \in H^2(X)$ is a general divisor. In particular $\hat a I \equiv a e^{D/z} \pmod{NE(X)}$.
\end{definition}

\begin{definition} [The $\Tp$-corrected quantization frame] \label{d:q-frame}
The quantized basis corresponding to the kernel of $\T$ is chosen to be the naive ones
\begin{equation}\label{k:1}
\kappa_i :=\hat k_i I = \hat h^i(\hat \xi - \hat h)^{r'+1} I,
\end{equation}
where $i \in [0, d - 1]$.

For classes in the image of $\Psi$, a correction term will be inserted as follows: for $\mathbf{e} = (e_1, e_2)$ with $e_1 \in [0, r + 1]$ and $e_2 \in [0, r']$, we define
\begin{equation} \label{e:3}
\begin{split}
v_{\mathbf{e}} &:=  \hat h^{e_1} (\hat\xi - \hat h)^{e_2} I + {\delta}_{(e_1,\, e_2)}  (-1)^{r'-e_2} \hat k_{e_1 + e_2 - (r' + 1)} I,
\end{split}
\end{equation}
where 
\begin{equation*}
\begin{cases}
\delta_{(e_1,\, e_2)} = 0 &\mbox{if $e_1 + e_2 \le r'$, and} \\
\delta_{(e_1,\, e_2)} =1 &\mbox{otherwise}. 
\end{cases}
\end{equation*}
The frame is called $\Psi$-corrected since \eqref{e:3} is equivalent to 
$$
v_{\mathbf{e}} = \hat T_\mathbf{e} I + \delta_{e_1, r + 1} (\Psi \hat T'_{\mathbf{e}}) I.
$$
%
%The added term comes from $\ker \T$ if and only if $e_1 + e_2 \in [r' + 1, r]$. But classes from $H^{2j}(X')$ with $j \ge r + 1$ are also corrected accordingly.

When modulo $q_1$, $q_2$, this frame $\{v_\mathbf{e}, \kappa_i\}$ reduces to the constant frame $\{T_\mathbf{e}, k_i\}$ which is consistent with the one given in Lemma \ref{l:1}.
\end{definition}

We investigate the structure on the kernel part. It is clear that 
$$
\zp_1\kappa_j  = \kappa_{j + 1} \quad \mbox{for} \quad 0 \leq j \leq d - 2,
$$ 
and by \eqref{e:2},
\begin{equation} \label{q1-appear}
\begin{split}
\zp_1\kappa_{d - 1} &= {\hat h}^{r-r'}(\hat \xi - \hat h)^{r'+1} I\\
&= \big({\hat h}^{r-r'}(\hat \xi - \hat h)^{r'+1} I - (-1)^{r' + 1} {\hat h}^{r + 1} I\big) + (-1)^{r' + 1} {\hat h}^{r + 1} I \\
& = (-1)^{r'}v_{(r+1,\, 0)} + (-1)^{r' + 1}q_1\, \kappa_0.
\end{split}
\end{equation}

Similar calculations lead to the matrices $C_j(q_1,q_2)$, $j = 1, 2$, explicitly. The miracle is that there is no $z$-dependence under the $\Psi$-corrected quantization frame in \eqref{k:1} and \eqref{e:3}, hence we have $A_j = C_j$ for $j = 1, 2$. To be explicit, we write the connection matrices $C_j$, $i = 1, 2$, in the block form with respect to the decomposition $H(X) = \Tp H(X') \oplus^\perp K$:
\[
C_j = 
\begin{bmatrix} C_j^{11} & C_j^{12} \\ C_j^{21} & C_j^{22} \end{bmatrix}.
\]
We emphasize that \eqref{q1-appear} is the only place where the monomial $q_1$ appears in $C_1(q_1, q_2)$. Namely it is the $(R' + 1, R)$-th entry. In all the other entries the non-trivial monomials appeared are $1$, $q_2$ and $q_1 q_2$:

\begin{lemma} \label{l:K}
For $C_1$, the block corresponding to the kernel subspace is given by
\begin{equation} \label{K-block}
C_1^{22} = \begin{bmatrix}
&&& (-1)^{r' + 1} q_1 \\
1\\
&\ddots\\
&& 1
\end{bmatrix},
\end{equation}
where all blank entries are zero. It has characteristic polynomial $\lambda^d - (-1)^{r' + 1} q_1$.

All the other entries in $C_1$ are either $0, 1$, or $q_1 q_2$ up to sign. 

Moreover, for $C_1^{21}$ the constant terms appear only in the first row whose column has degree $r'$. All other entries are zero.
\end{lemma}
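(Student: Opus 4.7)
My plan is to read off $C_1$, the matrix of $\hat h = z\partial_1$ in the $\Psi$-corrected frame $\{v_\mathbf{e}\}\cup\{\kappa_j\}$, by applying $\hat h$ to each basis element and re-expressing the result. The only nontrivial inputs are the two Picard--Fuchs relations
\[
\hat h^{r+1}I = q_1(\hat\xi - \hat h)^{r'+1}I, \qquad \hat\xi(\hat\xi - \hat h)^{r'+1}I = q_2 I,
\]
coming from $\Box_\ell I = 0$ and $\Box_\gamma I = 0$, together with the combinatorics of the $\delta$-corrections in \eqref{e:3}.

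For the kernel block $C_1^{22}$: since $\kappa_j = \hat h^j\kappa_0$, one immediately has $\hat h\kappa_j = \kappa_{j+1}$ for $0 \le j \le d-2$, giving the subdiagonal of ones. The only nontrivial entry comes from $\hat h\kappa_{d-1}$, handled by the ``add and subtract $(-1)^{r'+1}\hat h^{r+1}I$'' trick already carried out in \eqref{q1-appear}: $\Box_\ell$ converts one copy of $\hat h^{r+1}I$ into $q_1\kappa_0$, and the remaining combination is recognized as $(-1)^{r'}v_{(r+1,0)}$. This identifies $C_1^{22}$ with the standard $d\times d$ companion matrix of $\lambda^d - (-1)^{r'+1}q_1$.

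For the image-to-kernel block $C_1^{21}$: in the generic range $e_1 \le r$, substituting the definitions of $v_\mathbf{e}$ and $v_{(e_1+1, e_2)}$ and using $\hat h\hat k_{j}I = \hat k_{j+1}I$ yields
\[
\hat h v_\mathbf{e} = v_{(e_1+1, e_2)} + (\delta_\mathbf{e} - \delta_{(e_1+1, e_2)})(-1)^{r'-e_2}\hat k_{e_1+e_2-r'}I.
\]
The two $\delta$-corrections agree except precisely when $e_1+e_2 = r'$ (where $\delta_\mathbf{e}=0$ but $\delta_{(e_1+1, e_2)}=1$), so the $K$-projection is $-(-1)^{r'-e_2}\kappa_0$ in that single case and vanishes otherwise. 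This accounts exactly for the claim that in the $e_1 \le r$ columns, the only nonzero entries of $C_1^{21}$ are the constants $\pm 1$ sitting in the first row and in the degree-$r'$ columns. For the boundary case $e_1 = r+1$, one must rewrite $\hat h^{r+2}(\hat\xi - \hat h)^{e_2}I$ as $q_1\hat h(\hat\xi - \hat h)^{r'+1+e_2}I$ via $\Box_\ell$ and then reduce the higher powers of $\hat\xi - \hat h$ using $\Box_\gamma$; the $\Psi$-correction term $(-1)^{r'-e_2}\hat k_{r+e_2-r'}I$ built into $v_{(r+1, e_2)}$ is chosen precisely so that every residual $q_1$-factor and every $\kappa_j$ ($j \ge 1$) contribution cancels, leaving only constant and $q_1 q_2$ parts in the image block.

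The main obstacle is this $e_1 = r+1$ case, where the two Picard--Fuchs relations must conspire with the $\Psi$-correction to leave behind only the single $q_1$-entry at $(R'+1, R)$ and otherwise entries of the form $0$, $\pm 1$ or $\pm q_1 q_2$ consistent with the sublinearity of Lemma~\ref{l:1.5} and the homogeneity of Lemma~\ref{l:weight}. Once this cancellation is verified, the companion form of $C_1^{22}$, its characteristic polynomial, and the asserted structure of $C_1^{21}$ all follow at once.
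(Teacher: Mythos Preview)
Your treatment of $C_1^{22}$ via \eqref{q1-appear} and of $C_1^{21}$ in the range $e_1 \le r$ via the $\delta$-bookkeeping is correct and is exactly what the paper does. The gap is in the boundary case $e_1 = r+1$.

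The identity you write, $\hat h^{\,r+2}(\hat\xi - \hat h)^{e_2}I = q_1\hat h(\hat\xi - \hat h)^{r'+1+e_2}I$, is false: applying $\Box_\ell$ leaves $q_1$ sitting to the \emph{right} of the remaining operator $\hat h(\hat\xi-\hat h)^{e_2}$, and commuting it to the left costs $z$-shifts (one has $[\hat h,q_1]=zq_1$ and $[\hat\xi-\hat h,q_1]=-zq_1$). So your reduction introduces exactly the $z$-dependence whose absence in $C_1$ is the point at stake, and there is no reason for the $\Psi$-correction alone to absorb it. The paper's explicit remark in \S\ref{ss:explicit} is that one may only commute operators past the combination $q_1q_2$, never past $q_1$ alone.

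The paper's maneuver is different: it first \emph{adds} the $\Psi$-correction term $(-1)^{r'-e_2}\hat h^{\,r-r'+1+e_2}(\hat\xi-\hat h)^{r'+1}I$ to the leading term, pulls out the common factor $\hat h^{\,r-r'+1+e_2}(\hat\xi-\hat h)^{e_2}$, and then applies the elementary identity
\[
a^{n}+(-1)^{n-1}b^{n}=(a+b)\sum_{j=0}^{n-1}(-1)^{j}a^{n-1-j}b^{j}
\]
with $a=\hat h$, $b=\hat\xi-\hat h$, $n=r'+1-e_2$, extracting a factor of $a+b=\hat\xi$. Only now are the Picard--Fuchs relations used: in each summand $\hat h^{\,r+1}\hat\xi(\hat\xi-\hat h)^{i}I$ one applies $\Box_\ell$ and then immediately $\Box_\gamma$ to obtain $(\hat\xi-\hat h)^{i}\,q_1q_2\,I$, and since $[\hat\xi-\hat h,q_1q_2]=0$ this equals $q_1q_2\,v_{(0,i)}$ with no $z$-contamination. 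The alternating sum telescopes to $(-1)^{e_2}\bigl(q_1q_2\,v_{(0,e_2)}-v_{(r+1,e_2+1)}\bigr)$ (and to $q_1q_2\,v_{(0,r')}$ when $e_2=r'$), landing entirely in the image block with entries $\pm 1$ or $\pm q_1q_2$ and contributing nothing further to $C_1^{21}$. Your outline does not supply this $\hat\xi$-extraction step, and without it the asserted cancellation does not go through.
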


\begin{lemma} \label{l:C2}
In $C_2(q_1, q_2)$, the non-trivial entries consist of monomials only. The monomials appeared in the entries are $0$, $1$, $q_2$ and $q_1 q_2$ up to sign. 

Moreover, the only non-zero entries in $C_2^{21}$ are $q_2$ up to sign. 
\end{lemma}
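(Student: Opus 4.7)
The plan is to compute each column of $C_2$ directly by applying $\zp_2 = \hat\xi$ to the frame elements $\kappa_j$ and $v_{\mathbf{e}}$ from \eqref{k:1} and \eqref{e:3} and reducing back to the $\Tp$-corrected frame using the Picard--Fuchs relations \eqref{e:2}, the commutativity $[\hat h, \hat\xi] = 0$, and the commutation rules with the Novikov variables: $\hat h\, q_1 = q_1(\hat h + z)$, $\hat\xi\, q_2 = q_2(\hat\xi + z)$, while $\hat\xi\, q_1 = q_1 \hat\xi$ and $\hat h\, q_2 = q_2 \hat h$. Since $X$ is Fano toric so that no mirror transform is needed, $C_2$ is $z$-independent in the $\Tp$-corrected frame, so any $z$-contribution produced during the reductions must cancel; restricting to the $z = 0$ part at the end is justified.

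For the kernel columns, commutativity and the second PF relation $\hat\xi(\hat\xi - \hat h)^{r'+1} I = q_2 I$ give
\begin{equation*}
\zp_2 \kappa_j \;=\; \hat h^j\, \hat\xi(\hat\xi - \hat h)^{r'+1}\, I \;=\; q_2\, \hat h^j I,
\end{equation*}
and rewriting $\hat h^j I$ in the frame (as $v_{(j, 0)}$ for $j \le r'$, or $v_{(j, 0)} + (-1)^{r'+1}\kappa_{j - r' - 1}$ for $j \in [r'+1, d-1]$) immediately puts the $\kappa$-columns of $C_2^{12}$ and $C_2^{22}$ entirely in $\{0, \pm q_2\}$. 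For the image columns, expanding $\hat\xi = (\hat\xi - \hat h) + \hat h$ yields
\begin{equation*}
\zp_2 v_{\mathbf{e}} \;=\; \hat h^{e_1}(\hat\xi - \hat h)^{e_2+1}\, I \;+\; \hat h^{e_1+1}(\hat\xi - \hat h)^{e_2}\, I \;+\; \delta_{(e_1, e_2)} (-1)^{r' - e_2}\, \zp_2 \kappa_{e_1 + e_2 - r' - 1}.
\end{equation*}
In the interior sub-case ($e_1 + 1 \le r$ and $e_2 + 1 \le r'$) no PF is triggered; writing the first two terms in the frame produces two $\kappa$-corrections whose signs differ by $(-1)^{r' - e_2 - 1} = -(-1)^{r' - e_2}$, so they cancel, leaving only $v_{(e_1, e_2+1)} + v_{(e_1+1, e_2)}$ together with the $\pm q_2$-contribution of the third term. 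In the boundary sub-cases ($e_2 = r'$ brings in $(\hat\xi - \hat h)^{r'+1} I = \kappa_0$; $e_1 \in \{r, r+1\}$ triggers $\hat h^{r+1} I = q_1 \kappa_0$ via PF1), the commutation $(\hat\xi - \hat h)^k q_1 = q_1(\hat\xi - \hat h - z)^k$ moves $q_1$ out and iterated PF2 reduces $(\hat\xi - \hat h)^{r'+1+m} I$ to produce $q_2$-factors. The key miracle, already displayed in \eqref{q1-appear} for $C_1$, is that the $\Tp$-correction $\kappa_{e_1+e_2-r'-1}$ built into $v_{\mathbf{e}}$ (Definition~\ref{d:q-frame}) conspires with the iterated reduction of $\kappa_j$ for $j \ge d$ to cancel every $q_1$-only term produced by PF1, leaving only $q_1 q_2$ as the $q_1$-bearing survivor. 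A sample computation $\zp_2 v_{(r+1, 0)} = \hat\xi \hat h^{r+1} I + (-1)^{r'}\hat\xi \hat h^d(\hat\xi - \hat h)^{r'+1} I = q_1 q_2\, v_{(0, 0)} + (-1)^{r'} q_2\, \hat h^d I$ illustrates the pattern: PF1 applied to $\hat h^{r+1} I$ is immediately followed by PF2 via the outer $\hat\xi$, so the $q_1$-factor always comes paired with a $q_2$.

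The main obstacle is organising this boundary-case bookkeeping column by column: matching the $z$-contributions from $(\hat\xi - \hat h)^k q_1$ against those from expressing $\hat h^a(\hat\xi - \hat h)^b I$ in the frame via the $\Tp$-correction $\delta_{(\cdot, \cdot)}\kappa_\cdot$, and verifying that no standalone $q_1$ survives in any column. Granting this, the sharp statement on $C_2^{21}$ follows: a kernel row $\kappa_i$ in an image column $v_{\mathbf{e}}$ receives contributions only from (i) the third term $\delta_{(e_1, e_2)}(-1)^{r'-e_2}\zp_2 \kappa_{e_1+e_2-r'-1}$, which by the kernel-column analysis contributes at most $\pm q_2$ at row $\kappa_{e_1+e_2-2r'-2}$, and (ii) residual $\kappa$-parts arising when expanding $\hat h^a(\hat\xi - \hat h)^b I$ in the frame. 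In (ii), every $q_1$-factor generated by PF1 is either absorbed by a $\Tp$-correction cancellation or directed (via a subsequent PF2-step producing $q_1 q_2$) into an image row $v_{(\cdot, 0)}$ rather than a kernel row, as demonstrated in the sample computation. Hence $C_2^{21}$ has entries only in $\{0, \pm q_2\}$, the sharp form claimed.
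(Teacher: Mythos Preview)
Your approach is essentially the same direct computation via the Picard--Fuchs relations, but organized differently: you act by $\zp_2=\hat\xi$ on each frame element and expand $\hat\xi=(\hat\xi-\hat h)+\hat h$, whereas the paper instead computes the matrix for $\zp_2-\zp_1$ and obtains $C_2$ as $C_1+(C_2-C_1)$.

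There is, however, a genuine gap in your handling of the $z$-dependence. You write that ``since $X$ is Fano toric so that no mirror transform is needed, $C_2$ is $z$-independent in the $\Tp$-corrected frame,'' and then restrict to $z=0$. This deduction is not valid. The equality $I=J$ only says the cyclic vector needs no correction; it does \emph{not} say that the connection matrix in the particular non-constant quantized frame $\{\hat T_{\mathbf e}I,\hat k_iI\}$ is $z$-free. That $z$-freeness is exactly the ``miracle'' the paper refers to, and it is part of what Lemma~\ref{l:C2} (together with Theorem~\ref{t:main1}) asserts; you cannot assume it. And your algorithm does manufacture $z$-terms: in the boundary case $e_1=r+1$ your expansion produces $\hat h^{r+2}(\hat\xi-\hat h)^{e_2}I$, and your stated rule $(\hat\xi-\hat h)^k q_1=q_1(\hat\xi-\hat h-z)^k$ then contaminates the reduction with $z$.

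The paper's organization is designed precisely to avoid this. Acting by $\zp_2-\zp_1$, the only commutation ever needed is either $(\zp_1)^j q_2=q_2(\zp_1)^j$ or $(\zp_2-\zp_1)^j\,q_1q_2=q_1q_2\,(\zp_2-\zp_1)^j$, both of which are $z$-free; this is the content of the Remark preceding \S\ref{ss:zp1}. With that device every reduction stays $z$-free, and the monomial list $\{0,1,q_2,q_1q_2\}$ as well as the $C_2^{21}$ claim drop out of the explicit case analysis in \S\ref{ss:zp2}. Your argument is easily repaired along the same lines: in the $e_1=r+1$ case, do not split $\hat\xi$ first. Keep $\hat\xi$ intact so that PF1 and PF2 fire in sequence,
\[
\hat\xi\,\hat h^{r+1}(\hat\xi-\hat h)^{e_2}I=(\hat\xi-\hat h)^{e_2}\,\hat\xi\,q_1(\hat\xi-\hat h)^{r'+1}I=(\hat\xi-\hat h)^{e_2}\,q_1q_2\,I=q_1q_2\,(\hat\xi-\hat h)^{e_2}I,
\]
and no $z$ ever appears. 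Once you close this gap, the remaining boundary bookkeeping you outlined goes through and matches the paper's computation.
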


The proof of the remaining part of Lemma \ref{l:K} as well as a complete proof of Lemma \ref{l:C2} are straightforward computations based on the Picard--Fuchs equations \eqref{e:2}, similar to the one in \eqref{q1-appear}. They are written in \S \ref{ss:explicit}. 

Here we emphasize that the difference between $C_1^{21}$ and $C_2^{21}$ on the constant terms is due to the fact that elements in $K$ can not contain the $\xi$ factor.

\section{Decomposition of $Q_0 H$ via block-diagonalization} \label{ss:decomp}

We learned from Lemma \ref{l:K} and Lemma \ref{l:C2} that for simple $(r, r')$ flips (with $r > r'$) the Dubrovin connection is irregular of Poincar\'e rank 1 at $q^\ell = \infty$. Over the Hopf--M\"obius strip $M$, the Dubrovin connection is a system of first order PDE's of two variables $x = q_1' = 1/q_1$ and $y = q_2' = q_1 q_2$. Recall that $R = \operatorname{rank} H(X)$, $R' = \operatorname{rank} H(X')$, and $d = \operatorname{rank} K = r - r'$ (so that $R = R' + d$). In the $q_1'$ direction, it takes the form
\[
z q_1' \frac{\p}{\p q_1'} S = A S,
\]
where $S$ is the fundamental solution matrix and $A$ is the connection matrix of size $R \times R$. $A$ is entire in $q_1$ but has a simple pole at $q_1' = 0$.

The solution of ODE of this type was developed by Sibuya, Malgrange, Wasow etc.\ (cf.~\cite{Wasow, Sibuya}), and completed in early 1970's. One key step is to block-diagonalize the matrix $A$, starting from the worst singularity. It turns out that this ``classical'' procedure produces an ideal of quantum multiplication generated by $K$, which is however NOT an ideal in $H(X)$! By the flatness of the Dubrovin connection we may simultaneously block-diagonalize all quantum multiplication matrices.

\subsection{Block diagonalization} \label{s:5.2}
%
%Write the connection matrices $A_j = C_j$ in the block form with respect to the decomposition $H(X) = \Tp H(X') \oplus^\perp K$:
%\marginnote{My edit starts here}
%\[
%C_j = 
%\begin{bmatrix} C_j^{11} & C_j^{12} \\ C_j^{21} & C_j^{22} \end{bmatrix},
%\]

We have $A_j = C_j$, $j = 1, 2$. From Lemma \ref{l:K}, \small
\[
C_1^{22} = \begin{bmatrix} 0 & 0 &\cdots & (-1)^{r' + 1} q_1 \\
1 & 0 & \cdots & 0 \\
&\ddots \\
0 & \cdots &1 &0 
\end{bmatrix}
= \frac{1}{x} \begin{bmatrix} 0 & 0 &\cdots & (-1)^{r' + 1} \\
x & 0 & \cdots & 0 \\
&\ddots \\
0 & \cdots &x &0 
\end{bmatrix}.
\] \normalsize
We will now work on the irregular system of partial differential equations in variables $(x, y)$ with a parameter $z$.
The irregularity comes only from $x$, and it is thus necessary to keep track of the lowest order entries in $x$ in the connection matrix.
By \S \ref{ss:zp1}, the only non-zero row in $C_1^{21}$ where the lowest (constant) order entry occurs comes from the first row given in \eqref{e:edge}. 
%For convenience, write $\tilde{C_1} := x C_1$ so that $\tilde{C}$ is now holomorphic and the equation becomes
%\begin{equation} \label{e:5.2}
% z x \frac{d}{dx} Y(x) = \frac{\tilde{C}_1(x)}{x}  Y(x),
%\end{equation}
For convenience, we \emph{drop the explicit dependence on $y$} from the notations below when no confusion will likely arise.
%We have also that
%\[
% \tilde{C}_1^{11}(x=0) = \tilde{C}_1^{12} (0)= \tilde{C}_1^{21}(0) =0,
%\]
%and hence can write $\tilde{C}_1^{ij} (x) =: x \bar{C}_1^{ij}$, where $(ij) \neq (22)$ for notational convenience.
%
%With $C_1^{22}$ of this form, we can now perform the shearing transformation. 
%Let $d$ be the size of the square matrix $C_1^{22}$ and let $t =x^{1/d}$.

A transformation is needed to bring $C^{22}_1$ into its ``semisimple'' form: let $u = x^{1/d}$, we modify the constant frame in Definition \ref{d:q-frame} to $\{T_i\}$ with
\begin{equation} \label{e:q-sf}
\{T_i\}_{i = 0}^{R' - 1} = \{T_{\bf e}\}, \qquad \{T_{R' + i}\}_{i = 0}^{d - 1} = \{u^{i} k_{i} \}_{i = 0}^{d - 1}.
\end{equation}

\begin{lemma} [Shearing] \label{l:5.1}
Let 
\[
 Y(x) =\operatorname{diag}( 1^{R'}, u^0, u^1, \cdots, u^{d-1}). 
\]
The equation
\begin{equation} \label{e:5.2}
zx \frac{\p}{\p x} S = C_1 S,
\end{equation}
after the substitutions $S = Y W$ and $x=u^d$, becomes
\begin{equation} \label{e:5.3}
  z u \frac{\p}{\p u} W = D_1(u) W,
\end{equation}
where $D_1$ can be written in the block form as
\begin{equation} \label{e:5.4}
 \begin{split}
 D_1^{11} &= d \cdot {C}_1^{11},  \\
 D_1^{12} &= d \cdot {C}_1^{12} \cdot \operatorname{diag} (u^0, u^1, \cdots,  u^{d-1}), \\
 D_1^{21} &= d \cdot \operatorname{diag} (u^0, u^{-1}, \ldots,  u^{-d+1}) \cdot   {C}_1^{21}, \\
  D_1^{22} &= \frac{d}{u} \cdot \begin{bmatrix}
  0 & 0 &\cdots &(-1)^{r'+1} \\
 1 & -z\frac{1}{d} u & \cdots & 0 \\
& \ddots &\ddots \\
  0 & \cdots &1 & -z\frac{d-1}{d} u
   \end{bmatrix} .
 \end{split} 
\end{equation}

Furthermore, $D_1^{21}$ is a power series in $u$.
Thus, \eqref{e:5.3} is irregular of Poincar\'e rank $1$ in $u$,  and the irregular part only appears in the $(2,2)$ block.
\end{lemma}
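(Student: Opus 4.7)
The approach is a direct gauge-transformation calculation, followed by a careful reading-off of the block structure using the explicit description of $C_1$ supplied by Lemma~\ref{l:K}. Setting $S = YW$ in $zx\partial_x S = C_1 S$ gives
\[
zx\,\partial_x W \;=\; \bigl( Y^{-1}C_1Y - zx\,Y^{-1}\partial_x Y \bigr)W,
\]
and the substitution $x=u^d$, i.e.\ $x\partial_x = (u/d)\partial_u$, produces $zu\,\partial_u W = D_1 W$ with $D_1 = d\bigl(Y^{-1}C_1Y - zx\,Y^{-1}\partial_x Y\bigr)$. Since $Y$ is the identity on the $\Tp H(X')$-summand and $\operatorname{diag}(u^0,\ldots,u^{d-1})$ on the $K$-summand, the three off-kernel blocks $D_1^{11}, D_1^{12}, D_1^{21}$ are immediate by block multiplication---the only arithmetic is the insertion of the appropriate $u^{\pm i}$ factors into the columns of $C_1^{12}$ and the rows of $C_1^{21}$.

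The key local calculation is in the $(2,2)$-block. Conjugation by $\operatorname{diag}(u^0,\ldots,u^{d-1})$ rescales the $(i,j)$-entry of $C_1^{22}$ by $u^{j-i}$, turning each subdiagonal $1$ into $u^{-1}$ and the top-right corner $(-1)^{r'+1}q_1 = (-1)^{r'+1}u^{-d}$ into $(-1)^{r'+1}u^{-1}$; this yields the companion-like factor $d/u$ in the displayed form of $D_1^{22}$. The shift $-dzx\,Y^{-1}\partial_x Y$ contributes only to the $(2,2)$-block, via $\partial_x u^i = (i/d)u^i/x$, producing the diagonal correction $-\operatorname{diag}(0,z,2z,\ldots,(d-1)z) = -(d/u)\operatorname{diag}(0,zu/d,\ldots,(d-1)zu/d)$. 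This matches \eqref{e:5.4}.

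What remains is the assertion that $D_1^{21}$ is an honest power series in $u$, as $D_1^{21} = d\,\operatorname{diag}(u^0,u^{-1},\ldots,u^{-(d-1)})\cdot C_1^{21}$ prima facie contains negative powers in rows $\geq 1$. The main obstacle here is purely structural, and I would dispense with it by invoking the last part of Lemma~\ref{l:K}: since the only nonzero entries of $C_1^{21}$ sit in its first row, the rows indexed $1,\ldots,d-1$ contribute trivially, and the first row is scaled by $u^0=1$. Hence $D_1^{21}$ is in fact a polynomial in $u^d$. Combined with the polynomiality of $D_1^{11}$ in $x=u^d$ and the nonnegative $u$-powers in $D_1^{12}$, only $D_1^{22}$ carries a singularity, and it is a simple pole; this is exactly Poincar\'e rank $1$ with the irregular direction confined to the kernel block, as claimed.
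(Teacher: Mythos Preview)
Your proof is correct and follows essentially the same approach as the paper: a direct gauge-transformation computation for the block formulas in \eqref{e:5.4}, with the regularity of $D_1^{21}$ deduced from the structural fact in Lemma~\ref{l:K} that the nonzero entries of $C_1^{21}$ lie only in its first row. The paper's own proof is terser (it declares the block computation ``straightforward'' and cites Lemma~\ref{l:K} for the last assertion), but your added detail---especially the explicit tracking of the $u^{j-i}$ rescaling in the $(2,2)$-block and the diagonal shift from $-dzx\,Y^{-1}\partial_x Y$---is accurate and matches the stated $D_1^{22}$ exactly.
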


\begin{proof}
The computation of the sheared connection matrix $D$ is straightforward.
The last assertion about the regularity of $D_1^{21}$ follows from Lemma \ref{l:K} that the constant term of $C_1^{21}$ only appears in the first row, with other entries being zero. This concludes the proof.
\end{proof}

\begin{remark} \label{r:5.2} 
For the equation related to $C_2$, $ z y\, \p_y S = C_2 S$,
we note that $C_2$ is holomorphic in $x$ and $y$. 
After the shearing the equation becomes
\begin{equation} \label{e:Dy}
  z y \frac{\p}{\p y} W = D_2 W,
\end{equation}
such that $D_2^{21} = d \operatorname{diag} (u^0, u^{-1}, \cdots,  u^{-(d - 1)}) {C}_2^{21}$. 
By Lemma \ref{l:C2} the non-trivial entries in $C_2^{21}$ must divide $q_2 = xy$. Therefore, $D_2$ is still holomorphic in $u$ and $y$. 
\end{remark}

We note that the lowest degree term of $D_1$ in $u$ is of the (block) form
\[
\begin{bmatrix}
  0 & 0 \\
  0 & D_1^{22}(0)
 \end{bmatrix}  \frac{du}{u} 
 \]
such that 
\begin{equation} \label{e:D221}
 D_1^{22} (0)= \begin{bmatrix}
0 & 0 & \ldots & (-1)^{r'+1} \\
1 & 0 & \ldots & 0 \\
&\ddots \\
0 & \ldots &1 & 0
\end{bmatrix}.
\end{equation}
Therefore, $D_1(0)$ has $R$ eigenvalues, including $0$ with multiplicity $R'$ 
and $d$ \emph{distinct nonzero} eigenvalues coming from $D_1^{22}(0)$.
The latter group consists of $d$ distinct solutions of $\omega^d = (-1)^{r' + 1}$. (As we have seen in Lemma \ref{l:irr-root}.)

Following the classical procedure as in \cite{Wasow}, together with the flatness of the Dubrovin connection, we conclude that

\begin{proposition} \label{p:5.3}
The connection matrices $C_1$ and $C_2$ can be simultaneously block diagonalized, such that the $(2, 2)$ blocks is completely diagonalized. 

Furthermore, the block-diagonalization frame $\{\tilde T_i\}_{i = 0}^{R - 1}$ can be chosen so that $\tilde T_i$ has the initial term $T_i$ in $u$. Consequently the bundle $\mathscr{T}$ spanned by $\tilde T_i$ with $i \in [0, R' - 1]$ and $\mathscr{K}$ spanned by $\tilde T_j$ with $j \in [R', R - 1]$ are orthogonal to each other.\end{proposition}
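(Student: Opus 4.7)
My plan has three main steps: apply the classical Sibuya--Wasow block-splitting theorem to the single-variable system \eqref{e:5.3} in the direction $u$; transfer the decomposition to the $y$-direction using the flatness $[\nabla_1,\nabla_2]=0$ of the Dubrovin connection; and deduce the orthogonality of $\mathscr{T}$ and $\mathscr{K}$ from the Frobenius property of the Poincar\'e pairing.

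For the first step, view \eqref{e:5.3} as a meromorphic ODE in $u$ with $y$ as a parameter. The leading term $D_1(0,y)$ decomposes into two blocks with disjoint spectra: the $(1,1)$ block $d\cdot C_1^{11}(0,y)$ is nilpotent, since $h$ acts nilpotently on $\Tp H(X')$ in the classical ring recovered at $q_1=0$, while the $(2,2)$ block \eqref{e:D221} has as eigenvalues the $d$ distinct roots of $\omega^d=(-1)^{r'+1}$. The classical block-splitting theorem \cite{Wasow} then produces a formal gauge $P(u,y)=I+u\,P_1(y)+\cdots$ with $P(0,y)=I$ such that $\tilde D_1:=P^{-1}D_1P-z u\,P^{-1}\p_uP$ is block-diagonal; a recursive Hukuhara--Turrittin refinement within the $(2,2)$ block then diagonalizes it entirely, using that its leading eigenvalues are pairwise distinct.

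For the second step, the flatness condition $z u\,\p_uD_2-zy\,\p_yD_1+[D_1,D_2]=0$ is gauge-invariant, so in the new frame we have $zu\,\p_u\tilde D_2-zy\,\p_y\tilde D_1+[\tilde D_1,\tilde D_2]=0$. Since $\tilde D_1$ is block-diagonal, the off-diagonal block $\tilde D_2^{12}$ satisfies a linear Sylvester-type equation $zu\,\p_u\tilde D_2^{12}=\tilde D_1^{11}\tilde D_2^{12}-\tilde D_2^{12}\tilde D_1^{22}$; its coefficient operator is invertible at $u=0$ by the spectral gap, so a formal induction in powers of $u$ forces $\tilde D_2^{12}\equiv\tilde D_2^{21}\equiv 0$. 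The same mechanism inside the diagonalized $(2,2)$ block forces $\tilde D_2^{22}$ to be diagonal: its $(i,j)$-entries for $i\ne j$ satisfy $zu\,\p_u(\tilde D_2^{22})_{ij}=-(\lambda_i-\lambda_j)(\tilde D_2^{22})_{ij}$ with $\lambda_i-\lambda_j\ne 0$ at $u=0$.

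For the third step, the normalization $P(0,y)=I$ guarantees that each $\tilde T_i$ has initial term $T_i$ from \eqref{e:q-sf}, so at $u=0$ the decomposition $\mathscr{T}\oplus\mathscr{K}$ specializes to the orthogonal decomposition $\Tp H(X')\oplus^\perp K$ of Lemma \ref{l:1}. Orthogonality then propagates globally via the Frobenius property: $h*_{small}$ is self-adjoint under the Poincar\'e pairing, so generalized eigenvectors with distinct eigenvalues are orthogonal, and the $\mathscr{T}$-eigenvalues remain bounded at $u=0$ whereas the $\mathscr{K}$-eigenvalues blow up like $\omega_i u^{-1}$ by Lemma \ref{l:irr-root}. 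The main delicate point I anticipate is verifying the joint $(u,y)$-regularity of $P$ and the uniformity in $y$ of the spectral gap required for the Sylvester argument, both of which ultimately follow from $D_1^{22}(0)$ being independent of $y$.
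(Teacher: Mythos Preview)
Your steps 1 and 2 are correct. Step 1 is essentially the paper's approach (Wasow's block-splitting via the spectral gap at the leading irregular term); note only a small inaccuracy: the leading singular coefficient of $D_1$ has \emph{vanishing} $(1,1)$ block, not $d\cdot C_1^{11}(0,y)$, since $C_1^{11}$ is regular in $u$---but this does not affect the spectral-gap argument. Your step 2 takes a genuinely different route from the paper: you deduce $\tilde D_2^{12}=\tilde D_2^{21}=0$ directly from the gauge-transformed flatness equation by a Sylvester-type induction in powers of $u$, whereas the paper checks the initial condition $\nabla_2 T_j\in\mathscr{T}$ at $u=0$ explicitly (using Lemma~\ref{l:C2}, that $C_2^{21}|_{q_2=0}=0$) and then invokes ODE uniqueness in $u$. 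Your argument is arguably cleaner in that it does not need the specific input from Lemma~\ref{l:C2}; the paper's argument, on the other hand, makes the geometric role of the initial condition along $u=0$ more visible.

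Step 3, however, has a genuine gap. The Frobenius property $(h*a,b)=(a,h*b)$ gives orthogonality of \emph{generalized eigenspaces of} $h*_{small}$ under the ordinary Poincar\'e pairing, and these eigenspaces are $z$-independent. But $\mathscr{T}$ and $\mathscr{K}$ are the $z$-dependent subbundles produced by block-diagonalizing the \emph{connection}, not the multiplication operator: by \eqref{e:z}, for $z\ne 0$ the operator $h*$ does not preserve the block structure because of the extra term $zP^{-1}\p_k P$. So your self-adjointness argument establishes orthogonality only at $z=0$, not for the full $z$-dependent frame, and it is with respect to the wrong pairing. The paper instead uses Lemma~\ref{metrical}: the Dubrovin connection is compatible with the \emph{polarized} pairing $\Lb a(z),b(z)\Rb=(a(z),b(-z))$. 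From this one sees that $\mathscr{T}^\perp$ (orthogonal under $\Lb\cdot,\cdot\Rb$) is $\nabla_i$-closed for $i=1,2$; since it agrees with $K$ at $u=0$, uniqueness of the $\nabla$-invariant complement forces $\mathscr{K}=\mathscr{T}^\perp$. You should replace your eigenvalue argument with this connection-compatibility argument.
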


\begin{proof}
Since the $(1, 1)$ block and $(2, 2)$ block do not share any eigenvalues,
the block diagonalization is possible.
The complete diagonalization of the $(2, 2)$ block follows from the fact that 
all eigenvalues of $D_1(0)$ are different in the $(2, 2)$ block.

As explained also in Remark~\ref{r:5.2}, we can use the same shearing
transformation matrix for $C_1$ and $C_2$.
We need to simultaneously diagonalize the sheared counterparts (i.e.~the $(2, 2)$ blocks) of $C_1$ and $C_2$. This is doable as they form part of the flat connection. The flatness together with suitable boundary condition makes the process possible.

To be precise, let $\tilde T_i(u, y, z)$ be the frame leading to block diagonalization for $\nabla_1$ such that $\tilde T_i$ has $T_i$ as the initial term. Then for $i \in [0, R' - 1]$, 
$$
\nabla_1 \tilde T_i = \sum_{j = 0}^{R' - 1} e_{1i}^j \tilde T_j
$$
for some power series $e_{1i}^j(u, y, z)$ and $\tilde T_i(0, y, z) = T_i$.

We claim that the sub-bundle $\mathscr{T}$ spanned by $\tilde T_i$, $i \in [0, R' - 1]$ is also closed under $\nabla_2$, i.e.~$\nabla_2 \tilde T_i \in \mathscr{T}$. For the initial value along $u = 0$ we have $\nabla_2 \tilde T_j (0, y, z) = \nabla_2 T_j$. By Lemma \ref{l:C2}, the block $C_2^{21}$ vanishes since $q_2 = xy = 0$ along $u = 0$. Hence $\nabla_2 T_j \in \mathscr{T}$. Now
$$
\nabla_1 (\nabla_2 \tilde T_i) = \nabla_2 \nabla_1 \tilde T_i = \sum_{j - 0}^{R' - 1} (\p_2 e_{1i}^j) \tilde T_j + \sum_{j - 0}^{R' - 1} e_{1i}^j (\nabla_2 \tilde T_j).
$$
The uniqueness theorem of ODE in $u$ then implies that $\nabla_2 \tilde T_i \in \mathscr{T}$.

The bundle $\mathscr{T}^\perp$ under the pairing \eqref{e:pairing} is closed under $\nabla_i$, a fact which follows from Lemma \ref{metrical} easily. Indeed for all $v \in \mathscr{T}$ and $w \in \mathscr{T}^\perp$, we have $0 = \p_i \Lb v, w \Rb = \Lb \nabla_i v, w\Rb + \Lb v, \nabla_i w\Rb = \Lb v, \nabla_i w\Rb$. Hence $\nabla_i w \in \mathscr{T}^\perp$. This in particular implies that $\mathscr{K} = \mathscr{T}^\perp$. 

The proof that $\nabla_2 \tilde T_j$ is proportional to $\tilde T_j$ for $j \in [R', R - 1]$ is similar and thus omitted.
\end{proof}

\begin{remark} \label{r:EV}
The initial term of the frame which leads to the block diagonalization is the starting frame in \eqref{e:q-sf}. The initial terms of the frame further diagonalizes the $(2, 2)$ block corresponds to the eigenvectors of $D^{22}_1(0)$ in \eqref{e:D221} under the starting frame. Let $K_j$ be the eigenvector with eigenvalue $\omega_j$ where $\lambda^d - (-1)^{r' + 1} = \prod_{j = 0}^{d - 1} (\lambda - \omega_j)$. Then it is easy to see that
\begin{equation} \label{e:EV}
K_j = \sum_{i = 0}^{d - 1} \omega_j^{-i} u^i k_i, \qquad j \in [0, d - 1].
\end{equation}
\end{remark}

\begin{proposition} \label{p:5.4}
After the block-diagonalization, the $(1, 1)$ block of equation \eqref{e:5.3}
can be written in terms of $x$, instead of $u =x^{1/d}$, and we get
\[
 z x \frac{\p}{\p x} Z = \tilde C_1^{11} Z, \qquad z y \frac{\p}{\p y} Z = \tilde C_2^{11} Z,
\]
where $\tilde C_j^{11}$'s are power series in $x$, $y$ and $z$. 
\end{proposition}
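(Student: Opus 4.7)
The plan is to exploit the $\mathbb{Z}/d$ Galois symmetry of the shearing cover $u = x^{1/d}$. Let $\sigma : u \mapsto \zeta u$ with $\zeta$ a primitive $d$-th root of unity generate this symmetry, and assign the Galois weight $w_i = 0$ for $0 \le i < R'$ and $w_{R' + i} = i$ for $0 \le i < d$. Since $C_1(x, y), C_2(x, y)$ are polynomial in $x, y$ by Lemma~\ref{l:Fano}, they are $\sigma$-invariant; combined with $Y(\zeta u) = Y(u) M$ for $M := \operatorname{diag}(1^{R'}, 1, \zeta, \ldots, \zeta^{d-1})$, this yields the covariance
\[
D_j(\zeta u, y) = M^{-1} D_j(u, y) M, \qquad j = 1, 2.
\]
Equivalently, the $(i, k)$ entry of $D_j$ as a formal power series in $u$ contains only monomials $u^m$ with $m \equiv w_k - w_i \pmod d$.

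The key step is to construct the block-diagonalizing gauge $G(u, y, z) = I + L$ of Proposition~\ref{p:5.3}, with $L$ strictly off-block and $G|_{u = 0} = I$, inductively in the order of $u$ so that $\sigma G = M^{-1} G M$. At order $u^m$, the standard Wasow recursion reduces to a Sylvester-type equation
\[
D_1^{11}(0, y)\, L_m - L_m\, D_1^{22}(0) - z m\, L_m = E_m(y, z)
\]
for the $(1, 2)$-block of $L_m$ (and symmetrically for the $(2, 1)$-block), with $E_m$ a polynomial expression in the lower-order coefficients of $D_1, D_2$ and $G^{(< m)}$. The spectral disjointness from Lemma~\ref{l:irr-root}---the $d$ eigenvalues of $D_1^{22}(0)$ being the distinct roots of $\lambda^d = (-1)^{r' + 1}$, disjoint from those of $D_1^{11}(0, 0)$---guarantees unique solvability as a formal power series in $z$. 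By the induction hypothesis on $G^{(< m)}$ and the Galois homogeneity of $D_1, D_2$, the $(i, k)$ entry of $E_m$ carries weight $w_k - w_i$ and vanishes unless $m \equiv w_k - w_i \pmod d$; since the Sylvester operator preserves this grading ($D_1^{11}(0, y)$ and $D_1^{22}(0)$ being Galois invariant), so does the unique solution $L_m$, closing the induction. Flatness of the Dubrovin connection then forces the same $G$ to block-diagonalize $D_2$ simultaneously, exactly as in the proof of Proposition~\ref{p:5.3}.

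With $G$ equivariant, the block-diagonalized connection $\tilde D_j = G^{-1} D_j G - z G^{-1} u \p_u G$ inherits the same covariance $\sigma \tilde D_j = M^{-1} \tilde D_j M$. On the $(1, 1)$ block the weight condition $w_k - w_i = 0$ forces $\tilde D_j^{11}$ to contain only powers $u^m$ with $m \equiv 0 \pmod d$, i.e., only integer powers of $x = u^d$. Substituting $u \p_u = d\, x \p_x$ in \eqref{e:5.3} and extracting the $(1, 1)$-block subsystem yields $z x \p_x Z = \tilde C_1^{11} Z$ with $\tilde C_1^{11} := \tfrac{1}{d}\tilde D_1^{11} \in \operatorname{Mat}_{R'}(\mathbb{C}[\![x, y, z]\!])$; the analogous reduction of the equation in Remark~\ref{r:5.2} gives $\tilde C_2^{11} := \tilde D_2^{11}$.

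The main technical obstacle is verifying that the $-z m L_m$ shift in the Sylvester operator, together with the $z$-dependence propagating through $E_m$, preserves both unique solvability and the Galois grading at every order. This should be handled order-by-order in $z$: the leading-in-$z$ problem is a pure Sylvester equation with the already-established spectral gap, and the grading survives because $z$ is Galois-invariant and commutes with $M$.
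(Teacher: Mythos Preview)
Your Galois-equivariance strategy is sound and, at its core, encodes exactly what the paper does: the paper tracks the explicit factored form $P^{21} = \operatorname{diag}(1, u^{-1}, \ldots, u^{-(d-1)}) \cdot (\text{function of }x)$ through the recursion and observes that the diagonal $u$-factors from $\bar D^{12}$ and $P^{21}$ cancel in the $(1,1)$ block. Your weight grading $m \equiv w_k - w_i \pmod d$ is precisely this factored form restated, so the two arguments are the same phenomenon in different dress. Your packaging is arguably cleaner, since it replaces the ad hoc bookkeeping by a symmetry principle.

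That said, your execution contains a genuine slip. The displayed Sylvester equation $D_1^{11}(0,y)\,L_m - L_m\,D_1^{22}(0) - zm\,L_m = E_m$ mixes $u$-orders: the term $L_m\,D_1^{22}(0)$, with $D_1^{22}(0)$ the residue $\bar D_0^{22}$, lives at order $u^{m-1}$ (because of the simple pole), while the other two terms live at order $u^m$. In the actual recursion (cf.\ the paper's $P_l^{12} = -H_l^{12}(\bar D_0^{22})^{-1}$) only the right-multiplication by $\bar D_0^{22}$ survives as the operator to invert, since $\bar D_0^{11}=0$. More importantly, your assertion that $D_1^{22}(0)$ is Galois invariant is false: one has $M^{-1}\bar D_0^{22} M = \zeta^{-1}\bar D_0^{22}$, so right-multiplication by $\bar D_0^{22}$ \emph{shifts} the column weight by $-1$ rather than preserving it. The recursion still closes, but for a different reason: inductively $H_l$ carries the \emph{shifted} constraint $l-1 \equiv w_k - w_i$, and right-multiplication by $(\bar D_0^{22})^{-1}$ shifts back by $+1$, restoring $l \equiv w_k - w_i$ for $P_l$.

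A cleaner route, which avoids chasing weights through the recursion, is to invoke uniqueness directly: the block-diagonalizing gauge $G$ with $G|_{u=0}=I$ and $G-I$ strictly off-block is unique (the Wasow recursion determines each $P_l$ uniquely). Since $D_j$ is equivariant, $\widehat G(u) := M\,G(\zeta u)\,M^{-1}$ is another gauge with the same normalization that block-diagonalizes the same connection; hence $\widehat G = G$, i.e.\ $G(\zeta u) = M^{-1}G(u)M$. Equivariance of $\tilde D_j$ and the weight-zero conclusion on the $(1,1)$ block then follow immediately.
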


\begin{proof}
We will concentrate on $\tilde C_1$ where most of the action happens. Then the question is essentially reduced to an ODE in variable $u$, with $y, z$ acting as parameters. In the following the dependence on $y$ is mostly suppressed since it does not participate in the formal process involving $u$ and $D_1$. The steps involved are to apply the algorithm described in \cite[\S11]{Wasow}.

For notational convenience we rewrite \eqref{e:5.3} as follows
\begin{equation} \label{e:5.5}
  zu^2 \frac{\p}{\p u} W = \bar{D}(u, z)  W,
\end{equation}
where 
$$
\bar{D} = \sum_{l=0}^{\infty} \bar{D}_l u^l
$$ 
as a matrix-valued power series in $u$.
In particular, the subscripts now stand for the exponent of power series for the duration of this proof
(and the $1$ and $2$ of $D$ is temporarily suppressed). 
Similarly, let 
$$
P(u) = \sum_{l=0}^{\infty} P_l u^l
$$ 
with $P_0 = \mathrm{I}$ and $P_l$ being off-block-diagonal for $l > 0$. 
Now we perform a gauge transformation 
$$
W = P Z
$$ 
with new frame 
\begin{equation} \label{e:newT}
(\tilde T_0, \ldots, \tilde T_{R - 1}) = (T_0, \ldots, T_{R - 1}) P
\end{equation} 
to equation \eqref{e:5.5}, aiming to get the connection matrix in the block diagonalized form. That is,
\begin{equation} \label{e:5.6}
  zu \frac{\p}{\p u} Z = \frac{E}{u}  Z, \quad \text{with} \quad  E^{12} =0 = E^{21}.
\end{equation}
By writing 
$$
E = \sum_{l = 0}^\infty E_l u^l,
$$ 
this is equivalent to solving the following system of algebraic equations inductively \cite{Wasow}: 
\[
\begin{split}
 {E}_l^{11} &= - H_l^{11},  \\
 {E}_l^{22} &= - H_l^{22}, \\
 P_l^{12} &= - H_l^{12} (\bar{D}_0^{22})^{-1}, \\
 P_l^{21} &= (\bar{D}_0^{22})^{-1} H_l^{21} ,
\end{split}
\]
where
\[
H_l : = \sum_{s = 1}^{l - 1} P_s {E}_{l - s} - \sum_{s = 0}^{l - 1} \bar{D}_{l - s} P_s - z (l - 1) P_{l - 1}
\]
is determined by $P_s$ and ${E}_s$ for $s \leq l - 1$.
Note that $\bar{D}_0^{ij} =0$ unless $(ij) =(22)$.

Now we can use these equations and the facts that the off-diagonal blocks of $\bar{D}$ have the specific form to perform the induction. 
It is enough to show that $H_l^{11}$ is in powers of $x = u^d$ only. 
Note that ${\bar{E}}$ has vanishing off-diagonal blocks and $P$ has vanishing diagonal blocks (except for $P_0 = I$). 
We see from the above that, for $l \geq 1$,
\[
H_l^{11} = - \bar{D}^{11}_l - \sum_{s = 0}^{l - 1} \bar{D}_{l - s}^{12} P_s^{21} = - ( \bar{D}^{11} +\bar{D}^{12} P^{21})_l,
\]
since $E^{21}=0=\bar{D}_0^{12}$ and $P^{11}_{>0} =0$. Now note that 
\[
\begin{split}
u^{-1} \bar{D}^{12} &=  d \cdot {C}_1^{12}(x) \cdot 
     \operatorname{diag} (1, u^{1}, \cdots,  u^{d - 1}), \\
u^{-1}\bar{D}^{21} &=  d \cdot \operatorname{diag} (1, u^{-1}, \cdots,  u^{-(d - 1)})
   \cdot {C}_1^{21}(x). \\
\end{split}
\]
Inductively, it can be shown that 
\[
{P}^{21} =  \operatorname{diag} (1, u^{-1}, \cdots,  u^{-(d - 1)}) . (\text{matrix function in $x$}).
\]
Thus, $\operatorname{diag} (1, u^{1}, \ldots,  u^{d-1})$ is always cancelled
by its inverse in the $(11)$ block.
Since $\bar{D}^{11}/u$ depends only in $x$, we conclude that $\tilde C^{11}_1$ is a power series in $x$ (and $y, z$).

The proof for $C_2$ is simpler and hence omitted.
\end{proof}

\subsection{Decomposition of small quantum rings}

The quantum product $*$ in this subsection is assumed to be the small quantum product on $H(X)$. 

The Dubrovin connection is flat and the connection matrices $C_1$ and $C_2$ are simultaneously block-diagonalized to $\tilde C_1$ and $\tilde C_2$ respectively. Since $h*$ and $(\xi-h)*$ generate the quantum ring, which is commutative, we conclude that $C_1$, $C_2$ generate the matrix $C_\mu$ for $T_\mu *$ and $\tilde C_1$, $\tilde C_2$ induce block-diagonalization of all $C_\mu$'s, i.e.~ the entire small quantum ring, to $\tilde C_\mu$'s. 

Indeed for $a, b \in H(X)$ we have $ab = a*b + \sum_\beta q^\beta c_\beta$ for some $c_\beta \in H(X)$. Hence by induction on the Mori cone we conclude that $h*$ and $\xi*$ generate the small quantum algebra over the Novikov ring. Namely
\begin{equation} \label{e:q-div}
T_\mu * = \sum_{\beta \in NE(X)} q^\beta P_\beta (h*, \xi*)
\end{equation}
where $P_\beta$ is a polynomial. Since $X$ is Fano (cf.~Lemma \ref{l:Fano}), \eqref{e:q-div} is actually a finite sum. The top degree term $P_0$ is the cup product expression for $T_\mu$.

In particular the block diagonalization under variables $u = x^{1/d}, y, z$ extends to all $T_\mu *$. Moreover, it follows from Proposition \ref{p:5.4} that all the corresponding $(1, 1)$ blocks are still expressible in terms of $x, y$ and $z$. 

Nevertheless, two issues needs to be taken care in details:
\begin{itemize}
\item[(i)] Remove the $z$-dependence introduced in the block-diagonalization to interpret the product structure correctly. 
\item[(ii)] Identify the ground ring where the construction works. Since $T_\mu *$ is generated by $h*$ and $\xi*$ over $NE(X)$ instead of over $NE(X')$, the $(1, 1)$ block of $\tilde C_\mu$ might contains negative powers in $x$ even if $T_\mu$ is in the image of $\Psi$.
\end{itemize}

Denote the frame leading to the block diagonalization by
$$
\mathscr{F} = \{ \tilde T_0, \ldots , \tilde T_{R' - 1}, \tilde K_0, \ldots, \tilde K_{d - 1} \}
$$ 
which further diagonalizes the $(2, 2)$ blocks. Let $\mathscr{K}$ be the sub-bundle generated by $\{ \tilde K_0, \ldots, \tilde K_{d - 1} \}$. The frame $\{ \tilde T_0, \ldots , \tilde T_{R' - 1} \}$ is also a frame of 
$$
\mathscr{T} = \mathscr{K}^{\perp},
$$ 
the orthogonal sub-bundle with respect to the polarized pairing (cf.~Lemma \ref{metrical}). By \S \ref{s:5.2}, $\mathscr{F}$ is defined in variables $u = x^{1/d}$ and $y, z$. For convenience we denote the corresponding cyclic extension of the Novikov ring $\mathscr{R}'$ by 
$$
\widetilde{\mathscr{R}'} := \mathscr{R}' [u]/(u^d - x).
$$
Our constructions above are  over the ring $\widetilde{\mathscr{R}'}[\![z]\!]$. 

Denoted by $\mathscr{T}_0$ and $\mathscr{K}_0$ the restriction of $\mathscr{T}$ and $\mathscr{K}$ at $z = 0$ respectively. As in \S \ref{ss:abs}, for an element $f \in \widetilde{\mathscr{R}'}[\![z]\!]$ we write $f(0) = f(u, y, z = 0)$. 

By \eqref{e:z=0}, a simple solution to issue (i) is to restrict to the $z = 0$ slice which we will take in this subsection. A more sophisticated and complete solution needs the machinery of BF/GMT which will be done in the next section.

Issue (ii) is more subtle: let $\{T_\mu\}_{\mu = 0}^{R - 1} = \{T_{\mathbf{e}}, k_i\}$ be the constant frame constructed in Lemma \ref{l:1}. For a class $T_m$ and a divisor $D$, we have
\begin{equation} \label{e:ind}
\begin{split}
D* T_m &= D.T_m + \sum_{\mu = 0}^{R - 1} \langle D, T_m, T^\mu \rangle_+ T_\mu,
\end{split}
\end{equation} 
where $+$ stands for the invariants with non-trivial curve classes. By Lemma \ref{l:K} and \ref{l:C2}, $\langle D, T_m, T^\mu \rangle_+ \ne 0$ only in the following two cases: 
\begin{itemize}
\item[(1)] If $\mu \le R' - 1$ then $\deg T_\mu < \deg T_m$ (cf.~Lemma \ref{l:weight}). In this case the invariant is a scalar multiple of $xy$ or $y$. 
\item[(2)] If $\mu \in [R', R - 1]$ then $T_\mu = T_{R'} = k_0 = (\xi - h)^{r' + 1}$ and $T_m = T_{R - 1} = k_{d - 1} = h^{d - 1}(\xi - h)^{r' + 1}$. The invariant is $(D.\ell) (-1)^{r' + 1}/x$.
\end{itemize}
With (1) and (2), \eqref{e:ind} becomes
\begin{equation} \label{e:c-q}
\begin{split}
D* T_m &= D.T_m \\
&+ \sum_{\mu = 0}^{R' - 1} \langle D, T_m, T^\mu \rangle_+ T_\mu + \delta_{m, R - 1} (-1)^{r' + 1}\frac{(D.\ell) }{x} k_0,
\end{split}
\end{equation} 
where the sum can be restricted to the range $\deg T_\mu < \deg T_m$. 

Equation \eqref{e:c-q} leads to a recursive formula for $(D.T_m)*$, hence the polynomial expression of $T_\mu*$ in $h*$ and $\xi*$ as in \eqref{e:q-div}. For example, we have

\begin{lemma} \label{l:sing}
For $j \ge 1$, in the polynomial expression of $(h^j k_{d - 1})*$ in $h*$ and $\xi *$, the terms with singular coefficient arise from  
$$
\frac{(-1)^{r' + 1}}{x} (h*)^{j - 1} * k_0 * = \frac{(-1)^{r' + 1}}{x} (h*)^{j - 1} * ((\xi - h)*)^{r' + 1} + O(y).
$$

For $k_i \in K$, no singular coefficients occur for $k_i*$.
\end{lemma}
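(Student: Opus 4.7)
The approach is to iterate the recursion \eqref{e:c-q} so as to express $T*$ as a polynomial in $h*$ and $\xi*$ for the classes $T = k_i$ and $T = h^j k_{d-1}$, while tracking where a negative power of $x$ (equivalently, a positive power of $q_1$) can enter. The key observation, already visible in \eqref{e:c-q}, is that all the regular correction coefficients $\langle D, T_m, T^\mu\rangle_+$ belong by Lemmas \ref{l:K} and \ref{l:C2} to $\{1, q_2, q_1 q_2\} = \{1, xy, y\}$, and hence are regular in $x$; the \emph{only} source of a $1/x$ factor is the $\delta_{m,R-1}$ contribution, which is triggered exactly when the input basis class equals $k_{d-1}$.

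For the second assertion, I would build $k_i*$ iteratively from the factorization $k_i = (\xi-h)^{r'+1} h^i$: apply the recursion $r'+1$ times with $D = \xi-h$ to assemble $k_0*$, and then $i$ more times with $D = h$ to pass from $k_0*$ to $k_i*$. The intermediate classes $T_m$ encountered are $(\xi-h)^a$ for $a < r'+1$, followed by $k_b$ for $b < i \le d-1$. None of these equals $k_{d-1}$ as a $\Psi$-corrected basis element, so the $\delta_{m,R-1}$-term never fires, and no $1/x$ enters the polynomial expression of $k_i*$.

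For the first assertion, I would induct on $j$. The base case $j = 1$ is immediate: the recursion applied to $h \cdot k_{d-1}$ has input $T_m = k_{d-1}$, so the $\delta$-term activates and contributes exactly one singular piece proportional to $\frac{1}{x} k_0*$; all other terms are regular by the key observation above. For $j \ge 2$, write $h^j k_{d-1} = h \cdot (h^{j-1}k_{d-1})$; expand $h^{j-1}k_{d-1}$ in the $\Psi$-corrected basis via \eqref{e:basis} and verify that the coefficient of $k_{d-1}$ in this expansion vanishes. Hence the direct $\delta$-term is inactive at this step, and new singular contributions arise only from composing $h*$ with the singular part of $(h^{j-1}k_{d-1})*$, which by induction equals $\frac{(-1)^{r'+1}}{x}(h*)^{j-2} k_0*$. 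Composing with $h*$ then produces $\frac{(-1)^{r'+1}}{x}(h*)^{j-1} k_0*$, as asserted. The trailing identity $k_0* = ((\xi-h)*)^{r'+1} + O(y)$ is obtained from the same iteration: the correction $((\xi-h)*)^{r'+1} - k_0*$ is an accumulation of GW terms, and a dimensional count using $c_1(X)\cdot\ell = d$ rules out non-trivial contributions from pure $\ell$-class curves, forcing every correction to carry a factor of $q_2 = xy$, hence to be $O(y)$.

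The main obstacle I anticipate is verifying that applying $h*$ to the \emph{regular} part of $(h^{j-1}k_{d-1})*$ does not secretly create a new $1/x$ singularity through the $k_{d-1}\to k_0$ entry of $h*$. One must show that the $k_{d-1}$-output component of the regular part is itself divisible by $y$, so that any contribution produced this way is absorbed into $O(y)$ rather than spoiling the identification of the leading singular term. This should follow from the sparsity pattern recorded in Lemmas \ref{l:K} and \ref{l:C2}, but the bookkeeping must be executed with some care.
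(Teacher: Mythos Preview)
Your proposal follows the same approach the paper intends: the lemma is stated without proof and is meant to follow directly from iterating the recursion \eqref{e:c-q}. Your argument for the second assertion (degree induction along the chain $1 \to (\xi-h) \to \cdots \to k_0 \to \cdots \to k_i$) is correct, and the base case $j=1$ of the first assertion is handled exactly as you say.

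The ``obstacle'' you flag at the end, however, is a confusion between two viewpoints. You are worried that when forming $(h*)\circ(\text{regular part of }(h^{j-1}k_{d-1})*)$, the singular entry of the \emph{matrix} $h*$ at position $(k_0,k_{d-1})$ might produce a new $1/x$. But the lemma is about coefficients in the \emph{polynomial expression} in the commuting operators $h*,\xi*$: if $Q$ is a polynomial in $h*,\xi*$ with regular coefficients, then $(h*)\circ Q$ is again such a polynomial, with the same coefficients shifted by one factor of $h*$. No matrix entry of $h*$ ever enters this computation. So this obstacle does not exist.

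The genuine (and easy) point you should check instead is that the correction terms $c_\mu T_\mu*$ arising in the step $(h*)\cdot T_{(r+1,j-2)}$ do not themselves carry singular polynomial expressions. From the explicit computation in \S\ref{ss:zp1} (the formula for $z\partial_1 v_{(r+1,i)}$), the only nontrivial quantum correction for $T_m = T_{(r+1,j-2)}$ lands on $T_\mu = T_{(0,j-2)}$ with coefficient $\pm q_1q_2 = \pm y$; since $\deg T_{(0,j-2)} = j-2 \le r' < r$, the expression for $T_{(0,j-2)}*$ is regular by the degree induction already established for the second assertion. Hence the inductive step is clean: the singular part of $(h^j k_{d-1})*$ is exactly $(h*)$ applied to the singular part of $(h^{j-1}k_{d-1})*$, with no further contribution.
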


\begin{lemma} \label{l:reg}
For any $\alpha \in H(X)$, the matrix for $(\xi.\alpha)*$ has no singular entries in $x, y$. Also the $(1, 1)$ block of the matrix $\tilde C_\mu$ for $T_\mu *$, $\mu \in [0, R' - 1]$, has no singular entries in $x, y$. This resolves issue (ii).
\end{lemma}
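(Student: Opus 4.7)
The plan is to prove part (a) first using the divisor axiom for $\xi$, then to deduce part (b) by an induction that reduces to part (a) and Proposition~\ref{p:5.4}.

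The key input for part (a) is $\xi \cdot \ell = 0$, which follows from $[Z] = (\xi - h)^{r'+1}$ together with the classical relation $\xi(\xi - h)^{r'+1} = 0$ in $H(X)$: these force $\xi|_Z = 0$, and since $\ell \subset Z$ the pairing $\xi \cdot \ell$ vanishes. The divisor axiom then gives, for $\beta = d_1 \ell + d_2 \gamma$,
\[
\langle \xi, \alpha, T^\mu \rangle_{3, \beta} \,=\, d_2 \, \langle \alpha, T^\mu \rangle_{2, \beta},
\]
so the quantum deviation $\Delta := \xi * \alpha - \xi \cdot \alpha$ carries an $xy$-factor, i.e., $\Delta \in q_2 \cdot H(X)[\![q_1, q_2]\!]$. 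By associativity,
\[
(\xi \cdot \alpha) * \,=\, \xi * (\alpha *) \,-\, \Delta *.
\]
Lemma~\ref{l:C2} gives that $\xi *$ is regular in $(x, y)$. Lemmas~\ref{l:K} and~\ref{l:sing} tell us that the singular part of $\alpha *$ consists of operators of the form $(1/x)\,(h*)^{j-1} (k_0*)$ for $j \geq 1$, all arising from the single $q_1$-entry of $h*$ at position $(R', R-1)$ and factoring through $k_0 *$. Since $*$ is commutative, the singular piece of $\xi * (\alpha *)$ can be rewritten as $(1/x)\,(h*)^{j-1}\,(\xi * k_0) *$; applying the divisor argument again to $\xi * k_0$ (using $\xi \cdot k_0 = \xi(\xi - h)^{r'+1} = 0$) yields $\xi * k_0 \in q_2 \cdot H(X)[\![q_1, q_2]\!]$, and the factor $(1/x)(xy) = y$ decreases the singularity by one order. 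A uniform bookkeeping of this cancellation together with the $xy$-absorption in $\Delta *$ will show $(\xi \cdot \alpha)*$ regular in $(x, y)$.

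For part (b), I induct on $e_2$ with $T_\mu = T_{(e_1, e_2)} \in \Tp H(X')$. The recursion
\[
T_{(e_1, e_2)} \,\equiv\, \xi \cdot T_{(e_1, e_2 - 1)} - T_{(e_1 + 1, e_2 - 1)} \pmod{K},
\]
obtained from $\xi = h + (\xi - h)$ in $\xi \cdot h^{e_1}(\xi - h)^{e_2 - 1}$, reduces the inductive step to three contributions. The first, $(\xi \cdot T_{(e_1, e_2 - 1)}) *$, has regular matrix by part (a); by the same block-decoupling argument as in Proposition~\ref{p:5.4}, the shearing $Y$ of Lemma~\ref{l:5.1} and the block-diagonalization $P$ act trivially on the $(1,1)$ block for matrices with regular $(1,1)$, $(1,2)$, and $(2,1)$ blocks, so this contribution's $(1,1)$ block remains regular in $(x, y)$. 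The second contribution $T_{(e_1 + 1, e_2 - 1)} *$ is handled by induction. The $K$-correction from the basis identity in Lemma~\ref{l:1} contributes via $k_i *$, which is regular by Lemma~\ref{l:sing}. The base case $e_2 = 0$ is $T_{(e_1, 0)} = h^{e_1}$: after block-diagonalization $(h*)^{e_1}$ has $(1,1)$ block $(\tilde C_1^{11})^{e_1}$, a power series in $(x, y, z)$ by Proposition~\ref{p:5.4}; the Novikov correction $h^{e_1} * - (h *)^{e_1}$ is regular by the divisor axiom for $h$ combined with the Fano finiteness of the Novikov expansion (Lemma~\ref{l:Fano}).

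The main obstacle is the detailed singular bookkeeping in part (a): Lemma~\ref{l:sing}'s description of the singular part of $\alpha *$ as factoring through $k_0 *$ is exactly what makes the divisor-axiom cancellation close, since $\xi * k_0$ then inherits an additional $q_2$ factor that absorbs one order of $1/x$ per step. Verifying that the iteration terminates at a regular operator requires explicit use of the cyclic form of $C_1^{22}$ from Lemma~\ref{l:K}, in particular that $q_1$-entries are confined to a single position. A secondary subtlety in part (b) is verifying the ``trivial action on the $(1,1)$ block'' of the block-diagonalization procedure for general $T_\mu *$, which relies on the regular-entry structure of $C_\mu^{11}$, $C_\mu^{12}$, $C_\mu^{21}$ obtained inductively from part (a).
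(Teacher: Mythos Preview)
Your approach to part (a) shares the paper's key ingredient ($\xi \cdot \ell = 0$) and the recursive structure from \eqref{e:c-q}, though the paper's argument is considerably more terse (``follows from \eqref{e:c-q} and induction since $(\xi.\ell) = 0$''). Your reliance on Lemma~\ref{l:sing} for the claim that ``the singular part of $\alpha*$ factors through $k_0*$'' for \emph{general} $\alpha$ is a gap: that lemma is stated only for $\alpha = h^j k_{d-1}$ and $\alpha = k_i$. The statement you want does follow from iterating \eqref{e:c-q} (the only source of a singular coefficient is the $\delta_{m,R-1}\,\frac{(D.\ell)}{x}\, k_0$ term), but this deserves its own line of argument rather than a citation.

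For part (b), your induction on $e_2$ is a genuinely different route from the paper's, and it is more involved than necessary. The paper's proof is a one-line observation: for $\mu \in [0, R'-1]$, whenever $T_\mu = T_{(e_1,e_2)}$ carries a $k$-correction (i.e., $e_1+e_2 \ge r'+1$), the class $T_\mu$ is already a multiple of $\xi$ in $H(X)$. This is visible from the proof of Lemma~\ref{l:1}: expanding $(\xi'-h')^{j-i} h'^i$ in $H(X')$ using $h'^{r'+1}=0$ leaves every surviving monomial with a $\xi'$-factor, and $\Psi$ carries $\xi'$ to $\xi$. Part (a) then gives regularity of the full matrix $T_\mu*$, hence of its $(1,1)$ block. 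When there is no correction ($e_1+e_2 \le r'$), the recursion \eqref{e:c-q} never reaches $T_{R-1}=k_{d-1}$ by degree count, so no singular coefficient arises.

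There is also a genuine error in your base case. You claim that ``the Novikov correction $h^{e_1}* - (h*)^{e_1}$ is regular by the divisor axiom for $h$,'' but the divisor axiom for $h$ gives $h \cdot \ell = 1$, not $0$, so this does \emph{not} suppress pure-$q_1$ contributions. In fact, for $e_1 \ge r'+1$ the recursion producing $h^{e_1}*$ from $(h*)^{e_1}$ can and does hit $k_{d-1}$ and pick up a $\frac{1}{x}$ coefficient. The reason $h^{e_1}*$ is nonetheless regular is precisely the paper's observation: writing $h^{e_1} = T_{(e_1,0)} - (-1)^{r'}k_{e_1-r'-1}$, the first summand is $\xi \cdot \beta$ (hence regular by part (a)) and the second is regular by Lemma~\ref{l:sing}. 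So your base case, once repaired, already contains the whole of the paper's argument for (b), making the induction on $e_2$ superfluous.
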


\begin{proof}
The first statement follows from \eqref{e:c-q} and induction since $(\xi.\ell) = 0$. For the second statement, notice that the constant frame $T_\mu$ in Lemma \ref{l:1} has the property that whenever there is a correction term by $k_{j - (r' + 1)}$ given in \eqref{e:basis}, then $T_\mu$ contains the factor $\xi$. The result follows.
\end{proof}

Now we may derive the splitting of small quantum rings: 

\begin{proposition} \label{p:split}
Let $\mathscr{K}_0$ be the sub-bundle generated by $\{ \tilde K_0(0), \ldots, \tilde K_{d - 1}(0) \}$. Then both $\mathscr{K}_0$ and $\mathscr{T}_0 = \mathscr{K}_0^\perp$ are ideals of $Q_0 H(X)\otimes \mathscr{R}'$ and
\begin{equation}
\begin{split}
Q_0 H(X)\otimes \mathscr{R}' \cong \mathscr{T}_0 \times \mathscr{K}_0 \cong_{\widetilde{\mathscr{R}'}} \langle \tilde T_0(0), \ldots, \tilde T_{R' - 1}(0) \rangle \times \Bbb C^{d},
\end{split}
\end{equation}
where $\mathbb{C}^{d}$ is the trivial ring consisting of $d$ idempotents. 

The second isomorphism is valid only over the extension $\widetilde{\mathscr{R}'}$ of $\mathscr{R}'$.
\end{proposition}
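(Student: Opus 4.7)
The plan is to combine the block-diagonalization of $h*$ and $\xi*$ provided by Propositions~\ref{p:5.3}--\ref{p:5.4} with the generation statement \eqref{e:q-div} and the commutativity of the quantum product. First, since every $T_\mu *$ is a polynomial in $h*$ and $\xi *$ over the Novikov ring (a finite sum by Lemma~\ref{l:Fano}), the matrix $\tilde C_\mu$ of $T_\mu *$ in the frame $\mathscr{F}$ inherits the block-diagonal form of $\tilde C_1$ and $\tilde C_2$. Restricting to $z=0$ via \eqref{e:z=0}, multiplication by each constant basis element $T_\mu$ preserves both $\mathscr{T}_0$ and $\mathscr{K}_0$; expanding $\tilde T_j(0) = \sum_\mu P_j^\mu(0)\, T_\mu$ (and likewise for $\tilde K_j(0)$) then shows that the same stability holds for multiplication by an arbitrary element of $\mathscr{T}_0$ or $\mathscr{K}_0$.

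The crucial step is commutativity: for $v \in \mathscr{T}_0$ and $w \in \mathscr{K}_0$, the product $v*w$ lies in $\mathscr{K}_0$ because $v*$ preserves $\mathscr{K}_0$, while $v*w = w*v$ lies in $\mathscr{T}_0$ because $w*$ preserves $\mathscr{T}_0$; hence it lies in $\mathscr{T}_0 \cap \mathscr{K}_0 = 0$, the triviality of this intersection coming from the orthogonal complementary splitting established in Proposition~\ref{p:5.3}. This simultaneously yields the ideal property for both subbundles and the direct-product decomposition of rings.

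For the identification $\mathscr{K}_0 \cong \mathbb{C}^d$, the complete diagonalization of the $(2,2)$ block in $\{\tilde K_0(0),\ldots,\tilde K_{d-1}(0)\}$ makes every multiplication operator on $\mathscr{K}_0$ diagonal in this basis. Thus $\tilde K_j(0) * \tilde K_k(0)$ is a scalar multiple of $\tilde K_k(0)$, and by commutativity also a scalar multiple of $\tilde K_j(0)$; linear independence forces $\tilde K_j(0)*\tilde K_k(0) = \delta_{jk}\, \mu_j\, \tilde K_j(0)$ for some scalars $\mu_j$. The $d$ distinct leading terms of the eigenvalues from Lemma~\ref{l:irr-root} together with the explicit eigenvector form in Remark~\ref{r:EV} will be used to verify that each $\mu_j$ is a unit in $\widetilde{\mathscr{R}'}$, after which the rescaled elements $\tilde K_j(0)/\mu_j$ are a complete set of orthogonal idempotents, yielding the trivial algebra on $d$ idempotents.

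Finally, to see that the first isomorphism is already valid over $\mathscr{R}'$, Proposition~\ref{p:5.4} guarantees that the $(1,1)$ block of every $\tilde C_\mu$ is a power series in $x$ and $y$, so the multiplication table of $\mathscr{T}_0$ lives over $\mathscr{R}'$; equivalently $\mathscr{T}_0$ and $\mathscr{K}_0$ can be characterized as the sums of generalized eigenspaces of $h*$ with regular resp.\ irregular eigenvalues in $x$, a criterion invariant under the Galois action $u \mapsto \zeta u$ with $\zeta^d = 1$. The individual idempotents and the eigenvectors $\tilde K_j(0)$, however, involve the factors $u^i$ and only become well-defined after base change to $\widetilde{\mathscr{R}'}$, which accounts for the second isomorphism being stated only over the extension. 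The main technical point I expect will be the unitality of each $\mu_j$; once that is in hand, everything else is a formal consequence of block-diagonalization and the commutativity of the quantum product.
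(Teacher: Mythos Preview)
Your proposal is correct and parallels the paper's argument closely. The main steps---block-diagonalization propagates from $h*$, $\xi*$ to all $T_\mu*$ via \eqref{e:q-div}, restriction to $z=0$, the idempotent structure on $\mathscr{K}_0$, and the unit check on the $\mu_j$ using Lemma~\ref{l:irr-root} and Remark~\ref{r:EV}---are the same. One minor difference: to show $\mathscr{T}_0 * \mathscr{K}_0 = 0$, the paper invokes the Frobenius identity $(\tilde T_\mu * \tilde K_i, \tilde K_j)(0) = (\tilde T_\mu, \tilde K_i * \tilde K_j)(0) = 0$ together with nondegeneracy of the pairing on $\mathscr{K}_0$, whereas you argue via commutativity and $\mathscr{T}_0 \cap \mathscr{K}_0 = 0$. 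Both are valid and essentially equivalent.

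One caution: your appeal to Proposition~\ref{p:5.4} overstates what it proves. That proposition only shows $\tilde C_1^{11}$ and $\tilde C_2^{11}$ are power series in $x$; for general $\tilde C_\mu^{11}$ the coefficients in \eqref{e:q-div} live in $NE(X)$ rather than $NE(X')$ and can introduce negative powers of $x$. This is exactly the paper's ``issue (ii)'', which it resolves through Lemmas~\ref{l:sing} and~\ref{l:reg}. However, for the first isomorphism as stated you do not actually need the structure constants in the frame $\{\tilde T_\mu(0)\}$ to lie in $\mathscr{R}'$; you only need the subbundles $\mathscr{T}_0$ and $\mathscr{K}_0$ themselves to descend, and your Galois-invariance argument (invariance under $u \mapsto \zeta u$, $\zeta^d = 1$) handles that correctly. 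So the slip is harmless for the proposition, but be aware of it if you later need regularity of the full multiplication table in the $\tilde T_\mu(0)$ frame.
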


%\begin{remark}
%In analogy with derived categories of coherent sheaves, this is regarded as some sort of ``semi-orthogonal'' decomposition.
%\end{remark}

\begin{proof} 
There exist $C_{\mu \nu}^{\rho}$ and
distinct eigenvalues $\Lambda_{\mu i}$ and $\Lambda_i \neq 0$ such that
\begin{equation} \label{e:str}
\begin{split}
 \tilde T_{\mu}(0) * \tilde K_i(0) &= \Lambda_{\mu i} \tilde K_i(0), \qquad \forall \mu = 0, \ldots, R' - 1,\\
 \tilde K_i(0) * \tilde K_j(0) &=  \delta_{ij} \Lambda_i \tilde K_i(0) , \qquad \forall i = 0, \ldots, d - 1,\\
 \tilde T_{\mu}(0) * \tilde T_{\nu}(0) &=\sum_{\rho=1}^{R'}  C_{\mu \nu}^{\rho} \tilde T_{\rho}(0).
\end{split}
\end{equation}
In fact, $\Lambda_{\mu i} =0$ due to the self-duality of $\mathscr{K}$ and the Frobenius property
\[
 0= (\tilde T_{\mu}, \tilde K_i * \tilde K_j)(0) = (\tilde T_{\mu} * \tilde K_i, \tilde K_j)(0) = \Lambda_{\mu i} (\tilde K_i, \tilde K_j)(0)
\]
for all $i, j, \mu$. The second equality in \eqref{e:str} follows from 
$$
\Lambda_{ij} \tilde K_j(0) = \tilde K_i(0) * \tilde K_j(0) = \tilde K_j(0) * \tilde K_i(0) = \Lambda_{ji} \tilde K_i(0)
$$ 
and hence $\Lambda_{ij} = \delta_{ij} \Lambda_i$. It also follows that $\epsilon_i := \tilde K_i(0)/\Lambda_i$ is an idempotent for each $i$ since $\epsilon_i * \epsilon_j = \delta_{ij} \epsilon_j$. 

We need to show that $C_{\mu \nu}^\rho \in \tilde{\mathscr{R}}'$: the block diagonalization gives
$$
\tilde T_\mu(0) * = T_\mu * + \sum f_\mu^i(u, y) \tilde K_i(0) *
$$   
for some $f_\mu^i \in \tilde{\mathscr{R}}'$. By Lemma \ref{l:sing} and \eqref{e:EV} in Remark \ref{e:EV}, the matrix for the last term has entries in $\tilde{\mathscr{R}}'$. And by Lemma \ref{l:reg}, the same holds for $T_\mu *$.

We also need to show that $\epsilon_j \in \mathscr{K}_0$. By Lemma \ref{l:irr-root}, the eigenvalue function for $(\xi - h)*$ on $\tilde K_j(0)$, with $K_j = \sum_{l = 0}^{d - 1} \omega^{-jl} u^l k_l$ being given by \eqref{e:EV}, has its leading terms being
$$
\mu_j - \lambda_j = \omega^{-j(r + 1)}  u^{r + 1} y - \frac{\omega^j}{u} = - \frac{\omega^j}{u} (1 - \omega^{-j(r + 2)}  u^{r + 2} y).
$$ 
By \eqref{e:str} and \eqref{e:EV} again, the leading terms of $\Lambda_j(u, y)$ is then given by 
$$
\Big( \sum_{l = 0}^{d - 1} \omega^{-jl} u^l \frac{\omega^{jl}}{u^l} \Big) (-1)^{r' + 1} \frac{\omega^{j(r' + 1)}}{u^{r' + 1}} = d (-1)^{r' + 1} \frac{\omega^{j(r' + 1)}}{u^{r' + 1}}.
$$
Hence $1/\Lambda_j \in \tilde{\mathscr{R}}'$ and $\epsilon_j = \tilde K_j(0)/\Lambda_j$ is a regular vector field over $\tilde{\mathscr{R}}'$. This shows the splitting of quantum product at $z = 0$:
$$
Q_0 H(X) \otimes \tilde{\mathscr{R}}' \cong \langle \tilde T_0(0), \ldots, \tilde T_{R' - 1}(0) \rangle \times \Bbb C^{r - r'}.
$$

It remains to observe that while the frame $\{\tilde T_\mu, \tilde K_i \}$ is defined over $\widehat{\mathscr{R}}'$, the bundles $\mathscr{T}$ and $\mathscr{K}$ are actually defined over $\mathscr{R}'$. the proof is complete.
\end{proof}

We note that $\epsilon_j$ vanishes along the divisor $u = 0$. 

%In the next section we will strengthen the statement in Proposition \ref{p:split} without setting $z = 0$.

\section{Existence of $\hat \Psi$ as an $F$-embedding of $QH$} \label{s:small}

In the above the quantum product $*$ is performed in $H(X)$. To get the quantum product $*'$ on $H(X')$ we apply the BF/GMT procedure on the $(1, 1)$ blocks $\tilde C_\mu^{11}$'s. By Proposition \ref{p:5.6} below, this produces $QH(X')$ along certain locus $\sigma(\hat \bs)$ which is a non-linear map over the small parameter space $\hat \bs \in H^{\le 2}(X')$. In particular we get an isomorphism
\begin{equation} \label{e:qiso}
\langle \tilde T_0(0), \ldots, \tilde T_{R' - 1}(0) \rangle \cong \sigma^* QH(X')
\end{equation}
in a suitable sense---it is not a ring isomorphism since $\tilde T_0(0)$ is not the identity element in $\mathscr{T}$. Efforts will be paid to modify this isomorphism, or rather the frame $\tilde T_i$'s, to achieve a ``ring isomorphism'' (cf.~Theorem \ref{t:small}). 

It turns out that the correct category to state this isomorphism for the full (big) quantum cohomology is the category of $F$-manifolds. This is worked out in \S \ref{ss:F}. (See in particular Proposition \ref{p:int-mfd} and equation \eqref{e:Psi}.)

\subsection{Birkhoff factorization and generalized mirror transform}

\begin{proposition} \label{p:5.6}
After the Birkhoff factorization and generalized mirror transformation $\sigma(\hat \bs) \in H(X') \otimes \mathscr{R}'$ with $\hat \bs \in H^{\le 2}(X')$, $\tilde C_1^{11}$ and $\tilde C_2^{11}$ become the corresponding connection matrices for the quantum cohomology on $X'$ along $\sigma(\hat \bs)$.
\end{proposition}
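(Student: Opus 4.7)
The plan is to apply the standard Birkhoff factorization (BF) and generalized mirror transform (GMT) machinery (cf.\ \cite{LLW-II}) to the flat $z$-formal connection $(\tilde C_1^{11}, \tilde C_2^{11})$ on $\mathscr{T}$, and then to identify the resulting Frobenius structure with $QH(X')$ along the locus $\sigma(\hat \bs)$ by invoking the Picard--Fuchs ideal isomorphism of Lemma~\ref{l:1.2}. The required input is already in place: by Proposition~\ref{p:5.4} the matrices $\tilde C_i^{11}$ are regular power series in $x,y$ (and formal in $z$); by the block-diagonalization procedure they inherit flatness from the full Dubrovin connection; and by Lemma~\ref{metrical} together with $\mathscr{T} = \mathscr{K}^\perp$ they preserve a non-degenerate Poincar\'e pairing on $\mathscr{T}$.

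First I would carry out the BF step. Decompose $\tilde C_\mu^{11} = \sum_{k\ge 0} z^k A_{\mu,k}^{11}$ and construct iteratively a gauge $P(x,y,z) = I + zP_1 + z^2 P_2 + \cdots$, with each $P_k$ a matrix of power series in $x,y$, such that
\[
\bar C_\mu^{11} := -z P^{-1}\p_\mu P + P^{-1}\tilde C_\mu^{11} P
\]
is $z$-free. The obstructions at each order in $z$ vanish thanks to the Frobenius/flatness structure, exactly as in the reconstruction of the Dubrovin connection from the $I$-function in the toric Fano setting; the output is a $z$-independent flat connection written in the new $z=0$ frame $\tilde T_\mu(0) \cdot P|_{z=0}$, which is still aligned with the constant frame $T'_\mu$ of $H(X')$ modulo $q^{\ell'}$ by Proposition~\ref{p:5.3} and Lemma~\ref{l:1}.

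Next comes the GMT and the identification. By Lemma~\ref{l:1.2}, the $\mathscr{D}^z$-modules generated by $I^X$ and $I^{X'}$ are isomorphic over the localization $\mathscr{R}'[1/q^{\ell'}]$, and the decomposition of \S \ref{ss:decomp} extracts from the $X$-side module exactly the sub-$\mathscr{D}^z$-module corresponding to $\mathscr{T}$. After BF, the fundamental solution carried by $\mathscr{T}$ therefore becomes an element of $\mathscr{D}^z J^{X'}$; matching it against the frame $\{z\p_\mu J^{X'}\}$ determines uniquely an $\mathscr{R}'$-point $\sigma(\hat \bs) \in H(X') \otimes \mathscr{R}'$ such that the BF'd solution equals $J^{X'}(\sigma(\hat \bs), z^{-1})$. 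The QDE for $X'$ then identifies $\bar C_\mu^{11}$ with the Dubrovin connection matrices of $QH(X')$ at $\sigma(\hat \bs)$, and the modulo-$q^{\ell'}$ analysis of the previous paragraph gives $\sigma(\hat \bs) \equiv \hat \bs$ modulo $q^{\ell'}$, as expected of a mirror transform.

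The hardest part will be the solvability of the BF recursion with power-series (rather than Laurent) coefficients in $x,y$, together with showing that $\sigma(\hat \bs)$ actually lands on a locus parametrized by $\hat \bs \in H^{\le 2}(X')$ rather than escaping into higher-degree directions of $H(X')$. Both points are standard consequences of Fano positivity on the $X$-side and the $z$-weight bookkeeping inherited from the naive-quantization frame of Definition~\ref{d:q-frame}; they are handled within the general reconstruction framework of \cite{LLW-II}, and the only input specific to the flip situation is feeding that machinery the sub-$\mathscr{D}^z$-module extracted by the block-diagonalization of \S \ref{ss:decomp}.
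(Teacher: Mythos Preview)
Your proposal is correct and follows essentially the same approach as the paper: both arguments hinge on Lemma~\ref{l:1.2} (the Picard--Fuchs ideals of $X$ and $X'$ coincide over $\mathbb{C}[q_1,q_1^{-1},q_2]$), observe that the block-diagonalization extracts the regular sub-$\mathscr{D}^z$-module, and then invoke the general BF/GMT machinery (the paper cites Iritani and Coates--Givental \cite{CG}, you cite \cite{LLW-II}) to identify the $z$-free gauge with the Dubrovin connection of $X'$ at a mirror point $\sigma(\hat\bs)$. One small clarification: your phrase about $\sigma(\hat\bs)$ ``escaping into higher-degree directions of $H(X')$'' reads as a worry, but that is exactly what the GMT does and should do---$\sigma(\hat\bs)$ is a genuine point of $H(X')\otimes\mathscr{R}'$ with nontrivial higher-degree components (see the paper's Corollary~\ref{c:GMT}), and the only constraint is $\sigma(\hat\bs)\equiv\hat\bs\pmod{NE(X')}$.
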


\begin{proof}
The Picard--Fuchs (higher order) equations on $X$ have coefficients
as polynomials in $q_1$ and $q_2$, and similarly for $X'$. When restricting the variables to $P^1_{q_1} \setminus \{0, \infty \}$, by Lemma \ref{l:1.2}, these two systems are equivalent.

The Picard--Fuchs (first order) system on $X$ is entire, with irregular singularity of order 1
at $q_1 = \infty$.
What we have done to the Picard--Fuchs system of $X$ is to perform
gauge transformations and then block diagonalization to remove
the irregular part at $q_1 =\infty$.
The regular singular part still satisfies the same PF equations up to gauge transformation.

Since the flat connection of the quantum cohomology of $X'$ is equivalent to
the above PF system along the small parameter space $\hat \bs \in H^{\le 2}(X')$ up to Birkhoff factorization (gauge transformation) and generalized mirror transformation $\hat \bs \mapsto \sigma(\hat \bs) \in H(X')$ matching the initial conditions (due to Iritani and Coates--Givental \cite{CG}), the resulting system must be equivalent up to BF and GMT. 

After the BF (and GMT), the connection matrices are independent of $z$.
However, the \emph{frame} (in terms of constant vectors in $H(X)$) might still
have apparent $z$ dependence. The new frame is to be identified with the constant frames in $H(X')$, which establishes the desired correspondence. 
\end{proof}

Below we review the process of BF/GMT in the current situation aiming at a better understanding of Proposition \ref{p:5.6} (and the isomorphism \eqref{e:qiso}). \smallskip

%\emph{\bf Convention}: from now on we start the indices at $0$ instead of $1$.
%\smallskip

Since the original $C_0 = {\rm Id}$ (corresponding to $T_0 *$) on $H(X)$, we have also $\tilde C_0^{11} = {\rm Id}$ on $H(X')$. So in practice it is sufficient to perform the BF/GMT only on $\tilde C_a^{11}$ for $a = 1, 2$. 

Let $B_1 = B_1(x, y, z)$ be the BF matrix and set $B_1(0) = B_1(x, y, 0)$. Consider the small parameter
$$
\hat \bs = s^0 T_0' + s^1 h' + s^2 \xi' \in H^0(X') \oplus H^2(X').
$$ 
From $\hat \bs = \T \hat \bt = t^0 + t^1(\xi' - h') + t^2 \xi'$ we have identifications 
$$
s^0 = t^0,\quad s^1 = -t^1,\quad s^2 = t^1 + t^2.
$$ 
Then under the substitution $x = q^{\ell'}e^{s^1}$, $y = q^{\gamma'} e^{s^2}$, the ``$z$-free'' matrix
\begin{equation} \label{e:z-free}
\begin{split}
C_a'(\hat \bs) &= -(z \p_a B_1) B_1^{-1} + B_1 \tilde C_a^{11} B_1^{-1} \\
&= B_1(0) \tilde C_{a; 0}^{11} B_1(0)^{-1}(x, y), \qquad a \in \{0, 1, 2\}
\end{split}
\end{equation}
is related to the matrices $A_\mu'(\sigma)$ for $T_\mu' *'$ at the generalized mirror point $\sigma = \sigma(\hat \bs) \in H(X')[\![x, y]\!]$ via 
\begin{equation} \label{gmt'}
C_a'(\hat \bs) = \sum_\mu A_\mu'(\sigma(\hat \bs)) \frac{\p\sigma^\mu}{\p s^a}(\hat \bs), \qquad a = 0, 1, 2.
\end{equation}
In terms of the connection one form $A' = \sum_\mu A'_\mu \, d\sigma^\mu$, this is simply $\sigma^* A'$.

To proceed, it is convenient to consider the weight zero variables 
\begin{equation} \label{e:wt0}
s: = zu, \qquad t := u^{r + 2} y.
\end{equation}
This is not to be confused with the above flat coordinates $s^i$ and $t^i$.

\begin{lemma}
For the $\tilde C_a^{22}$ diagonalized block, the ``Birkhoff factorization $B_2$'' can be found for each $1 \times 1$ block by elementary integrations. 

More precisely, there is a weight zero power series $\phi_j(zu, u^{r + 2}y)$ for each $j \in [0, d - 1]$ such that the frame $\bK_j := \phi_j \tilde K_j$ satisfies
\begin{equation} \label{e:BFK}
zu \frac{\p}{\p u} \widehat{\bK}_j I = \frac{a^\circ_j(t)}{u} \widehat{\bK}_j I, \qquad zy \frac{\p}{\p y} \widehat{\bK}_j I = \frac{b^\circ_j(t)}{u} \widehat{\bK}_j I, 
\end{equation}  
where $a^\circ_j$ and $b^\circ_j$ are analytic in $t$ with $a^\circ_j(0) = \omega^j$ and $b^\circ_j(t) = \omega^{-j(r + 1)} t + \ldots$. 

Indeed, $\lambda_j = a^\circ_j/u$ (resp.~$\mu_j = b^\circ_j/u$) is the eigenvalue function of $h*_{small}$ (resp.~$\xi *_{small}$) with eigenvector $\bK_j(u, y, z = 0)$.
\end{lemma}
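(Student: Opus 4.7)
The strategy is to reduce to a rank-one Birkhoff factorization on each already-diagonalized 1-dimensional block produced by Proposition~\ref{p:5.3}, and to carry it out by elementary (Euler) integration in the weight-zero variables $s=zu$ and $t=u^{r+2}y$. What makes the integration ``elementary'' is that on each block the gauge $\phi_j$ is a scalar function, so the BF equation is a single first-order linear PDE rather than a matrix equation.

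First I would record that on the $j$-th completely diagonalized block the flat Dubrovin connection descends to a commuting pair of scalar equations
\begin{equation*}
zu\,\p_u \tilde K_j \;=\; \tilde d_{1,j}(u,y,z)\,\tilde K_j, \qquad zy\,\p_y \tilde K_j \;=\; \tilde d_{2,j}(u,y,z)\,\tilde K_j,
\end{equation*}
with compatibility $zu\,\p_u\tilde d_{2,j}=zy\,\p_y\tilde d_{1,j}$. Under the homogeneity assignment $\deg u=-1$, $\deg y=r+2$, $\deg z=1$, which is respected by every transformation used so far (Definition~\ref{d:q-frame}, Lemma~\ref{l:weight}, the shearing of Lemma~\ref{l:5.1}, and the block-diagonalizing gauge of Proposition~\ref{p:5.3}), each $\tilde d_{a,j}$ has weight $1$ with at worst a simple pole at $u=0$. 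Since every such weight-$1$ expression can be written uniquely as $u^{-1}s^m t^n$, one obtains the canonical representation
\begin{equation*}
\tilde d_{a,j}(u,y,z) \;=\; \frac{F_{a,j}(s,t)}{u}
\end{equation*}
for unique formal power series $F_{1,j},F_{2,j}\in\mathbb{C}[\![s,t]\!]$.

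Next I would set $a_j^\circ(t):=F_{1,j}(0,t)$ and $b_j^\circ(t):=F_{2,j}(0,t)$, take the ansatz $\bK_j=\phi_j\,\tilde K_j$ with $\phi_j=\phi_j(s,t)$, and translate the requirement that the new connection coefficients be $a_j^\circ(t)/u$ and $b_j^\circ(t)/u$. The chain rule gives $zu\,\p_u\phi_j=z\,E\phi_j$ with Euler vector $E:=s\p_s+(r+2)t\p_t$, and $zy\,\p_y\phi_j=z\,(t\p_t)\phi_j$. After cancelling the overall factor $z$ the BF condition becomes the Euler-type PDE system
\begin{equation*}
E\log\phi_j \;=\; -\frac{F_{1,j}(s,t)-a_j^\circ(t)}{s}, \qquad (t\p_t)\log\phi_j \;=\; -\frac{F_{2,j}(s,t)-b_j^\circ(t)}{s}
\end{equation*}
in $\mathbb{C}[\![s,t]\!]$. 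By flatness the two equations are compatible, so it suffices to solve the first. Because $E$ multiplies the monomial $s^m t^n$ by $m+(r+2)n\ge 1$ for $(m,n)\ne(0,0)$, it is termwise invertible on the maximal ideal, and the unique normalized solution $\phi_j(0,0)=1$ is obtained by dividing monomial-by-monomial.

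The last assertion of the lemma is then tautological: $\bK_j$ differs from $\tilde K_j$ by a scalar, so at $z=0$ it is still an eigenvector of $h*_{small}$ and $\xi*_{small}$, and by construction the eigenvalues are precisely $a_j^\circ(t)/u$ and $b_j^\circ(t)/u$; the leading values $a_j^\circ(0)=\omega^j$ and $b_j^\circ(t)=\omega^{-j(r+1)}t+\cdots$ are read off from Lemma~\ref{l:irr-root}. The delicate part of the plan is verifying that the obstruction to the Euler integration, namely the $(s,t)=(0,0)$ coefficient of $(F_{1,j}-a_j^\circ)/s$, actually vanishes so that $\log\phi_j$ is a genuine power series rather than acquiring a logarithm in $u$. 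I expect this cancellation to follow by tracking the $-z\,\mathrm{diag}(0,1,\ldots,d-1)$ correction to $D_1^{22}$ from Lemma~\ref{l:5.1} together with the first-row contributions of $D_1^{21}$ and $D_2^{21}$ from Lemma~\ref{l:K} and Lemma~\ref{l:C2}, after they are mixed into the $(2,2)$ block by the block-diagonalizing gauge of Proposition~\ref{p:5.4}, and this is the main technical hurdle.
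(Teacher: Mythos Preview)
Your plan matches the paper's: on each diagonalized $1\times1$ block, reduce the Birkhoff factorization to a scalar first-order equation for $\log\phi_j$ and integrate. Your chain-rule bookkeeping is in fact sharper than the paper's---you correctly identify $zu\,\partial_u$ acting on a function of $(s,t)$ as $z$ times the full Euler field $E=s\,\partial_s+(r{+}2)\,t\,\partial_t$, whereas the paper writes the $u$-direction equation as $s\,\partial_s\phi_j+\alpha_j\phi_j=0$ (in effect the combination $u\partial_u-(r{+}2)y\partial_y$) and then uses the $t$-equation at $s=0$ to fix $\phi_j(0,t)$. Once both directions are used the two formulations agree, and your identification $a_j^\circ(t)=F_{1,j}(0,t)$, $b_j^\circ(t)=F_{2,j}(0,t)$ is forced exactly as the paper has it.

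Where you are more honest than the paper is at the obstruction you flag---but contrary to your expectation it does \emph{not} cancel for $d\ge2$. The sole $z$-dependence entering $D_1$ is the shearing correction $-z\,\mathrm{diag}(0,1,\dots,d{-}1)$ in $D_1^{22}$ (Lemma~\ref{l:5.1}); at order $u^0$ nothing else feeds into the $(2,2)$ block, and conjugation by the eigenvector matrix of $D_1^{22}(0)$ produces the common diagonal value $-z\cdot\tfrac{1}{d}\sum_{i=0}^{d-1}i=-z\,\tfrac{d-1}{2}$. Hence $\alpha_j(0,0)=-(d-1)/2$, while the analogous check (using that $C_2^{12},C_2^{21},C_2^{22}$ all vanish at $y{=}0$ by Lemma~\ref{l:C2}) gives $\gamma_j(0,0)=0$. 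Thus for $d\ge2$ your Euler equation $E\log\phi_j=-\alpha_j$ has no solution in $\Bbb C[\![s,t]\!]$; one is forced to take $\phi_j=s^{(d-1)/2}\psi_j$ with $\psi_j$ a unit power series. The paper's proof simply calls the scalar ODE ``regular'' and integrates, passing over exactly this point. For $d$ odd $\phi_j$ is then still a genuine (non-unit) power series and the lemma survives as stated; for $d$ even either the normalization of $\tilde K_j$ or the literal statement needs a mild adjustment. This does not affect the downstream use of $\bK_j$, but the ``main technical hurdle'' you identify will not clear in the way you anticipate.
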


\begin{proof}
Each $1 \times 1$ equation is irregular of the form 
\begin{equation} \label{e:1by1}
zu \frac{\p}{\p u} \widehat{\tilde K}_j I = \frac{1}{u} a_j(zu, u^{r + 2} y) \widehat{\tilde K}_j I, \qquad zy \frac{\p}{\p y} \widehat{\tilde K}_j I = \frac{1}{u} b_j(zu, u^{r + 2} y) \widehat{\tilde K}_j I.
\end{equation}
In the $(s, t)$ coordinates we write $a_j(s, t) = a_{j}(0, t) + s \alpha_j(s, t)$ and it is elementary to see that there is a series $\phi_j(s, t)$ such that the equation for $\bK_j := \phi_j \tilde K_j$ eliminates $s r_j(s, t)$. Indeed the equation becomes
$$
s \frac{\p}{\p s} \widehat{\bK}_j I = \frac{1}{s}  a_{j}(0, t) \widehat{\bK}_j I + \Big( s \frac{\p}{\p s} \phi_j + \alpha_j(s, t) \phi_j \Big) \widehat{\tilde K}_j I = 0
$$
and $\phi_j(s, t)$ is solved from the regular equation 
$$
s \frac{\p}{\p s} \phi_j + \alpha_j(s, t) \phi_j  = 0.
$$ 
The initial condition $\phi_j(0, t)$ is selected so that 
\begin{equation} \label{e:ini}
t \frac{\p}{\p t} \phi_j(0, t) + \beta_j(0, t) \phi_j(0, t) = 0,
\end{equation}
where $b_j(s, t) = b_j(0, t) + s \beta_j(s, t)$. 

The compatibility of the system then implies that equation \eqref{e:ini} holds without setting $s = 0$, which is what we want to prove. The last statement is a general statement about the small quantum product. 
\end{proof}

\begin{remark}
As in the proof of Proposition \ref{p:split}, we have $\bK_i(0) * \bK_j(0) = \delta_{ij} \mathbf{\Lambda}_j \bK_j(0)$ at $\hat \bs$ for $\mathbf{\Lambda}_j(u, y) = \phi_j \Lambda_j$. The idempotents are  
\begin{equation} \label{e:idem}
\epsilon_j(u, y) = \bK_j(0)/\mathbf{\Lambda}_j(u, y),
\end{equation} 
hence the additional information provided by $\bK_j$ lies in \eqref{e:BFK}.
\end{remark}

Denote $T_i = \Tp  T'_i$ as before. We combine the block diagonalization $P$ and Birkhoff factorization $B = B_1 \oplus B_2$ into a single gauge transformation 
\begin{equation} \label{e:gauge}
G = PB^{-1} = [\widetilde\bT_0, \ldots, \widetilde\bT_{R' - 1}, \bK_0, \ldots, \bK_{d - 1}]
\end{equation} 
with $\widetilde\bT_i$ (resp.~$\bK_j$) being the resulting frames on $\mathscr{T}$ (resp.~$\mathscr{K}$) such that 
$$
\widetilde\bT_i \cong T_i \mod NE(X').
$$ 
Let $\widetilde\bT^{i} = \sum_l {\bf g}^{il} \widetilde\bT_l$ be the dual frame with respect to the pairing in \eqref{e:pairing}:
$$
{\bf g}_{i\bar j} = \Lb \widetilde\bT_i, \widetilde\bT_j \Rb.
$$ 

Since the connection matrices $C'_a(\hat s)$'s in \eqref{e:z-free} are $z$-free, the $(i, j)$-th entry is precisely the GW invariant in the frame at $z = 0$:
\begin{equation} \label{d:z=0}
\bT_i := \widetilde\bT_i(0).
\end{equation}
Hence 
\begin{equation} \label{e:entry}
(C'_{a})_j^i (\hat \bs) = ( T_a * \bT_j, \bT^i )^X (\hat \bs) \equiv \La T_a, \bT_j, \bT^i \Ra^X (\hat \bs).
\end{equation} 

Since $H^2(X)$ also generates $H(X)$ via small quantum product, we thus have (by WDVV equations) a slightly stronger vanishing result:

\begin{lemma} \label{l:van}
For any $a \in H(X)$, $\La a, \bT^i, \bK_j\Ra^X (\Tp \hat \bs) = 0 = \La a, \bK^j, \bT_i\Ra^X (\Tp \hat \bs)$. 
\end{lemma}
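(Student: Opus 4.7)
The plan is to deduce the extended vanishing from the ring-theoretic splitting established in Proposition \ref{p:split}, using WDVV associativity to propagate the base case from $H^{\le 2}(X)$ to all of $H(X)$. Because $\Tp \hat \bs \in H^{\le 2}(X)$, the divisor axiom identifies the three-point primary invariant as a small quantum pairing at the coupled base point:
\[
\La a, \bT^i, \bK_j \Ra^X(\Tp \hat \bs) = \bigl( a *_{small} \bT^i,\, \bK_j \bigr)(\Tp \hat \bs),
\]
and similarly for the companion pairing. By the orthogonality $\mathscr{T}_0 \perp \mathscr{K}_0$ recorded in Proposition \ref{p:5.3} and Proposition \ref{p:split}, it will therefore suffice to show that for every $a \in H(X)$ the operator $a *_{small}$ preserves each of $\mathscr{T}_0$ and $\mathscr{K}_0$ at the point $\Tp \hat \bs$.

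For the generators $a \in \{1, h, \xi\}$ this preservation is precisely the block-diagonality in the frame $\{\bT_i, \bK_j\}$ at $z = 0$, combining \S \ref{ss:decomp} with the identification \eqref{e:entry}. To bootstrap to an arbitrary $a$, I would invoke the Fano polynomiality \eqref{e:q-div},
\[
T_\mu *_{small} = \sum_{\beta \in NE(X)} q^\beta P_\beta(h*, \xi*),
\]
which by Lemma \ref{l:Fano} is a finite sum. WDVV associativity then expresses $T_\mu *_{small} \bT^i$ as an iterated application of $h *$ and $\xi *$ to $\bT^i$; each intermediate vector remains in $\mathscr{T}_0$ by induction on the degree of $P_\beta$, since $\mathscr{T}_0$ is stable under $h *$ and $\xi *$ by the base case. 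The identical argument, with $\bT^i$ replaced by $\bK_j$, gives the analogous closure in $\mathscr{K}_0$.

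The only bookkeeping to verify is that the polarized dualization $\bT^i = \sum {\bf g}^{il} \bT_l$ and $\bK^j = \sum {\bf g}^{jl} \bK_l$ respects the splitting, i.e.\ that $\bT^i \in \mathscr{T}_0$ and $\bK^j \in \mathscr{K}_0$; this is immediate because the orthogonality of $\mathscr{T}_0$ and $\mathscr{K}_0$ makes the pairing ${\bf g}$ block-diagonal and non-degenerate on each factor. No genuine obstacle is anticipated: the entire proof is a direct combination of Proposition \ref{p:split} (orthogonal ideal splitting at $\hat \bs$), the block-diagonalization base case for $H^{\le 2}$, and a routine WDVV induction enabled by Fano polynomiality.
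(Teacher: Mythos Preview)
Your proposal is correct and follows essentially the same approach as the paper: the paper's entire proof is the one-line remark preceding the lemma, namely that since $H^2(X)$ generates $H(X)$ under the small quantum product, the block-diagonality recorded in \eqref{e:entry} propagates to all $a \in H(X)$ via WDVV. Your write-up is simply a careful unpacking of this sentence, invoking the Fano polynomiality \eqref{e:q-div} and the orthogonal ideal splitting of Proposition~\ref{p:split} to make the induction explicit.
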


In terms of their $(i, j)$-th entries, equation \eqref{e:z-free} becomes
\begin{equation} \label{e:gmt'}
\La T_a, \bT_j, \bT^i \Ra^X (\hat \bs) = \sum_\mu \frac{\p\sigma^\mu}{\p s^a}(\hat \bs) \La T'_\mu, T'_j, T'^i\Ra^{X'} (\sigma(\hat \bs)). 
\end{equation}
Since $(A_\mu')^{i}_{0} = \delta_{\mu}^{i}$, by comparing the first column we find 
\begin{equation} \label{e:dsigma}
(C_a')^{\mu}_{0}(\hat \bs) = \La T_a, \bT_0, \bT^\mu \Ra^X (\hat \bs) = \frac{\p \sigma^\mu}{\p s^a}(\hat \bs).
\end{equation}
Also we have $\sigma \equiv \hat \bs$ modulo $q^{\ell'}$, $q^{\gamma'}$, equation \eqref{e:dsigma} then determines $\sigma(\hat \bs)$. 

Notice that $\p \sigma^\mu/\p s^0 = \delta^{\mu}_{0}$, but $\sigma^0(\hat \bs)$ depends on $\hat \bs$ non-trivially and $\sigma(0) \ne 0$. (See Corollary \ref{c:GMT} for an explicit example on $\sigma(\hat \bs)$.)

We may also rewrite \eqref{e:gmt'} (or rather \eqref{e:z-free} and \eqref{gmt'}) into its intrinsic form in Dubrovin connections.

\begin{proposition} \label{p:bbgm}
Along $\hat \bs \in H^{\le 2}(X')$ we have a canonical isomorphism
\begin{equation} \label{e:bbgm}
(\mathscr{T}, \nabla^X \mid_{\mathscr{T}}) \cong (H(X') \otimes \mathscr{R}', \sigma^* \nabla^{X'})
\end{equation}
of connections, where $\sigma: H^{\le 2}(X') \to H(X') \otimes \mathscr{R}'$ is uniquely determined by \eqref{e:bbgm} and the constraint that $\sigma(\hat \bs) \equiv \hat \bs \mod{NE(X')}$.
\end{proposition}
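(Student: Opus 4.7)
The plan is to repackage Proposition~\ref{p:5.6} and equation \eqref{e:gmt'} in intrinsic language. I would proceed in four stages.

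First, I would use the frame $\{\bT_i := \widetilde\bT_i(0)\}_{i = 0}^{R' - 1}$ coming from the combined gauge transformation $G = P B^{-1}$ of \eqref{e:gauge} to identify $\mathscr{T}|_{\hat \bs} \cong H(X') \otimes \mathscr{R}'$ as vector bundles. By construction $\bT_i \equiv T_i = \Tp T'_i \pmod{NE(X')}$, so the assignment $\bT_i \leftrightarrow T'_i$ is the underlying vector-bundle isomorphism in \eqref{e:bbgm}.

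Second, I would define $\sigma$ directly from the connection. Equation \eqref{e:dsigma},
$$
\frac{\p \sigma^\mu}{\p s^a}(\hat \bs) \;=\; \La T_a, \bT_0, \bT^\mu \Ra^X (\hat \bs), \qquad a = 0, 1, 2,
$$
together with the normalization $\sigma(\hat \bs) \equiv \hat \bs \pmod{NE(X')}$, uniquely determines $\sigma$ as a formal power series in the Novikov variables. Integrability of this system across $\hat \bs \in H^{\le 2}(X')$ is ensured by the mutual commutativity of the $z$-free matrices $C'_a(\hat \bs)$, which in turn follows from the flat, commutative Frobenius structure on $\mathscr{T}|_{z = 0}$ inherited from $*_{small}$.

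Third, I would invoke Proposition~\ref{p:5.6} to see that the $z$-free connection matrices $C'_a(\hat \bs)$ of $\nabla^X|_{\mathscr{T}}$, after BF, coincide with the pullback $\sigma^* A'$ of the Dubrovin connection of $X'$ in the frame $\{\bT_i\}$ identified with $\{T'_i\}$. Written out in components this is precisely \eqref{e:gmt'}, which is the content of the connection isomorphism \eqref{e:bbgm}. The verification rests on the gauge transformation formula \eqref{e:z-free}, the identification of Picard--Fuchs systems across the flip in Lemma~\ref{l:1.2}, and the Coates--Givental / Iritani reconstruction theorem that characterizes the small Dubrovin connection of the Fano toric manifold $X'$ uniquely up to BF and GMT. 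For uniqueness, any other $\tilde\sigma$ realizing \eqref{e:bbgm} would have to satisfy the same first-column relation \eqref{e:dsigma}, and the initial-condition normalization then forces $\tilde\sigma = \sigma$.

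The main obstacle is the reconstruction used in stage three: one must be certain that a $z$-free flat connection along $H^{\le 2}(X')$ which satisfies the Picard--Fuchs equations of $X'$ and has the standard identity-column normalization is necessarily the pullback of the Dubrovin connection of $X'$ under a uniquely determined GMT. Granted this, the rest is formal bookkeeping driven by the construction of the frame $\{\bT_i\}$ in Section~\ref{s:small}.
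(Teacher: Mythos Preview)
Your proposal is correct and matches the paper's own treatment: the proposition is stated there as the intrinsic rewriting of \eqref{e:z-free} and \eqref{gmt'} (equivalently \eqref{e:gmt'}), with $\sigma$ pinned down by \eqref{e:dsigma} and the normalization modulo $NE(X')$, exactly as you outline. The only cosmetic point is that the bundle isomorphism should be phrased via the full BF frame $\{\widetilde\bT_i\}$ (so that the connection matrices are $z$-free by \eqref{e:z-free}) rather than its $z=0$ restriction $\{\bT_i\}$, but since the resulting $C'_a$ are $z$-independent this distinction is immaterial.
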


\subsection{Special quantum invariance under the normalized frame}

\subsubsection{Compatibility on quantum products via WDVV}

In order to deduce consequences on quantum products from Proposition \ref{p:bbgm}, the following lemma is the starting point. 

\begin{lemma} \label{l:D-mod}
The isomorphism in \eqref{e:bbgm} is compatible with the small quantum $D$-module structures. Equivalently the quantum products of divisor classes $T_a := \Tp T'_a$ on $X$ and of classes $\sigma_* T'_a$ on $X'$ are compatible along the small parameter $\hat s \in H^{\le 2}(X')$.
\end{lemma}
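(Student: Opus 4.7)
The plan is to deduce the lemma directly from the connection isomorphism \eqref{e:bbgm} of Proposition \ref{p:bbgm}, by extracting the $z^{-1}$ coefficient of the Dubrovin connection on each side. The point is that the small quantum $\mathscr{D}^z$-module structure is encoded by the pair (flat coordinates, connection 1-form), and the quantum product by a divisor class is literally the $z^{-1}$ part of the 1-form along the corresponding divisor coordinate.

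First I would write $\nabla^{X, z}_a = \partial_a - z^{-1}\, T_a *$ for $a = 1, 2$ and, via the chain rule,
\begin{equation*}
\sigma^*\nabla^{X', z}_a \;=\; \partial_a \;-\; z^{-1} \sum_\mu \frac{\partial \sigma^\mu}{\partial s^a}(\hat{\bs}) \, T'_\mu *'_{\sigma(\hat{\bs})}.
\end{equation*}
By Proposition \ref{p:bbgm}, these two connections are identified under the frame correspondence $\bT_i \leftrightarrow T'_i$ (which restricts to $\Tp$ modulo $NE(X')$). Matching the $z^{-1}$ coefficients in each divisor direction $a$ reproduces precisely the matrix identity \eqref{e:gmt'}, namely
\begin{equation*}
T_a * \bT_j \;=\; \sum_\mu \frac{\partial \sigma^\mu}{\partial s^a}(\hat{\bs})\, T'_\mu *'_{\sigma(\hat{\bs})} T'_j
\end{equation*}
under $\bT_i \leftrightarrow T'_i$, which is exactly the statement that $T_a *$ on $\mathscr{T}$ agrees with $\sigma_*(T'_a) *'$ on $H(X') \otimes \mathscr{R}'$ at the mirror point $\sigma(\hat{\bs})$. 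This establishes both halves of the equivalence claimed in the lemma, since the $z^{-1}$ term of a Dubrovin connection and its action by divisor classes determine one another.

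No real obstacle is expected: the lemma is essentially a reinterpretation of the content of Proposition \ref{p:bbgm} at the level of the leading polar term in $z$. The only subtlety worth spelling out is that this is a $\mathscr{D}^z$-module identification rather than a ring identification, the discrepancy being absorbed into the generalized mirror map $\sigma$ via the Jacobian factor $\partial \sigma^\mu / \partial s^a$ obtained from \eqref{e:dsigma}. The extension to all of $Q_0 H$ beyond divisor classes will then follow in later sections from the Fano generation property \eqref{e:q-div} together with associativity, but this is not needed for the present lemma, which is stated only for the divisor insertions $T_a$ with $a = 1, 2$.
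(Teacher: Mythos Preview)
Your reading of the divisor case is correct and essentially reproduces \eqref{e:gmt'}, which is already the content of Proposition~\ref{p:bbgm}; there is nothing wrong with this step. But you have misjudged the scope of the lemma. The paper's own proof does not stop at single divisor insertions: it goes on, via WDVV and Lemma~\ref{l:van}, to establish the identity \eqref{e:comp} for arbitrary iterated products $T_{a_1} * \cdots * T_{a_r}$ versus $\sigma_* T'_{a_1} *' \cdots *' \sigma_* T'_{a_r}$, and this iterated statement is precisely what is invoked immediately afterward in \eqref{e:Pk}. So your claim that the extension ``is not needed for the present lemma'' is mistaken; it is the main point.

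Your instinct that the extension is automatic from associativity is right in spirit, but the argument needs one ingredient you do not name. The operator $T_a *$ on $H(X)$ does \emph{not} a priori preserve the subspace spanned by the $\bT_i$'s: composing restrictions $T_a*|_{\mathscr{T}}$ and then comparing with $(T_a*T_b)|_{\mathscr{T}}$ requires knowing that the kernel components produced at the intermediate step do not feed back. This is exactly Lemma~\ref{l:van} (equivalently, the ideal decomposition of Proposition~\ref{p:split}). The paper makes this explicit by expanding $\La T_a * T_b, \bT^i, \bT_j\Ra^X$ with WDVV, invoking Lemma~\ref{l:van} to drop the $\bK$-indexed terms, and then applying \eqref{e:gmt'} to each surviving factor. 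Your operator-theoretic phrasing is cleaner, but to be complete you must either cite Lemma~\ref{l:van} or the ideal property of $\mathscr{T}_0$ before asserting that compositions match.
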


\begin{proof}
We first notice that 
\begin{equation} \label{e:T0'}
\sigma_* T'_0 = \sum_\mu \p \sigma^\mu/\p s^0 \, T'_\mu = \sum_\mu \delta^\mu_0 T'_\mu = T'_0.
\end{equation}

Take two divisor classes $T'_a, T'_b$. Then from the WDVV equations,
\begin{equation} \label{e:wdvv}
\La T_a * T_b, \bT^i, \bT_j \Ra^X = \sum_\lambda \La T_a, \bT^i, \bT_\lambda \Ra^X \La T_b, \bT^\lambda, \bT_j \Ra^X.
\end{equation}
Along the small parameters $\Tp \hat \bs$, by Lemma \ref{l:van}, the sum is non-zero only in the non-kernel indices ($T_\lambda \not\in K$). By \eqref{e:gmt'}, the sum then becomes
$$
\sum_\lambda \Big( \La \sigma_* T'_a, T'^i, T'_\lambda \Ra^{X'} \La \sigma_* T'_b, T'^\lambda, T'_j \Ra^{X'}\Big) (\sigma(\hat \bs)).
$$
By the WDVV equations on the $X'$ side we then conclude
\begin{equation} \label{e:comp}
\La T_a * T_b, \bT^i, \bT_j\Ra^X (\Tp \hat \bs) = \La \sigma_* T'_a *' \sigma_* T'_b, T'^i, T'_j\Ra^{X'}(\sigma(\hat \bs)),
\end{equation}
where the tangent map $\sigma_*$ is performed at $\hat s$ and the quantum product on the right-hand-side is on $X'$ at $\sigma(\hat \bs)$. 

By induction on $r \in \Bbb N$, the equation \eqref{e:comp} holds for $T_{a_1} * \ldots * T_{a_r}$ and $\sigma_* T'_{a_1} * \ldots * \sigma_* T'_{a_r}$. The proof is complete.
\end{proof}

\subsubsection{Pseudo identity and the normalized frame}

Recall that $\bT_0 \equiv T_0 = {\rm id} \mod NE(X')$. The next step is to transform $\bT_0$ to the identity element (section) $e$ in $\mathscr{T}$ and normalized $\bT_i$'s accordingly. 

For $T_k \in K^\perp$, by Lemma \ref{l:reg} we may represent 
\begin{equation} \label{e:Pk}
T_k *_{\Tp \hat \bs} = P_k(h*_{\Tp \hat \bs}, \xi*_{\Tp \hat \bs})
\end{equation}
where $P_k$ is a polynomial with coefficient in $x, y$. 

\begin{definition} \label{d:Jac}
We define the $\Bbb C[\![x, y]\!]$-valued $R' \times R'$ matrix $(\mathscr{J}_k^\mu)$ by
\begin{equation} \label{e:Pkmu}
\sum_{\mu = 0}^{R' - 1}\mathscr{J}_k^\mu  T'_\mu := P_k(\sigma_* (\xi' - h')*', \sigma_*\xi' *') *' T'_0,
\end{equation}
where the quantum product $*'$ is taken at $\sigma(\hat \bs)$. Note that $\mathscr{J}_a^\mu = \p \sigma^\mu/\p s^a$ for $a \in \{0, 1, 2\}$ and $\mathscr{J}_k^\mu \cong \delta_k^\mu \mod NE(X')$ for all $k$. Hence $(\mathscr{J}_k^\mu)$ is invertible.
\end{definition}

Then by Lemma \ref{l:D-mod}, or rather equation \eqref{e:comp}, we have 
\begin{equation} \label{e:Pk}
\begin{split}
&\La T_k, \bT^i, \bT_j\Ra^X (\Tp \hat \bs) = \La P_k(h*, \xi *) * T_0, \bT^i, \bT_j\Ra^X (\Tp \hat \bs) \\
&= \La P_k(\sigma_* (\xi' - h')*', \sigma_*\xi' *') *' \sigma_* T'_0, T'^i, T'_j \Ra^{X'}(\sigma(\hat \bs)) \\
&= \sum_\mu \mathscr{J}_k^\mu \La T'_\mu, T'^i, T'_j \Ra^{X'}(\sigma(\hat \bs)).
\end{split}
\end{equation}

\begin{lemma} \label{l:q-inv}
There is a unique element $\bS_0 \in \mathscr{T}$ such that $\bS_0 * \bT_0$ is the identity element (section) $e$ in $\mathscr{T}$ (and so $e$ acts as zero on $\mathscr{K}$). 
\end{lemma}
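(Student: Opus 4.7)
By Proposition~\ref{p:split}, the ring $Q_0 H(X)\otimes \mathscr{R}'$ decomposes as the direct product $\mathscr{T}_0 \times \mathscr{K}_0$. Under this decomposition the unit $T_0 = 1$ splits uniquely as $T_0 = e + e'$ with $e \in \mathscr{T}_0$ and $e' \in \mathscr{K}_0$ being the units of the two factors. By construction $e$ acts as the identity on $\mathscr{T}_0$ and annihilates $\mathscr{K}_0$, which explains the parenthetical remark in the statement. The task then reduces to inverting $\bT_0$ inside the ring $\mathscr{T}_0$.

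The next step is to verify that $\bT_0 - e$ lies in the maximal ideal $\mathfrak{m}$ of the Novikov ring. From Proposition~\ref{p:5.3} and the gauge transformation \eqref{e:gauge}, the frame element $\widetilde{\bT}_0$ is normalized so that $\widetilde{\bT}_0 \equiv T_0 \pmod{NE(X')}$; restricting to $z=0$ preserves this and yields $\bT_0 \equiv T_0 \pmod{\mathfrak{m}}$. On the other hand, modulo $\mathfrak{m}$ the bundle splitting $\mathscr{T}\oplus\mathscr{K}$ degenerates to the linear orthogonal splitting $\Tp H(X') \oplus K$ of Lemma~\ref{l:1}, in which $T_0 \in \Tp H(X')$. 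Hence the projection of $T_0$ onto $\mathscr{T}_0$ is congruent to $T_0$ modulo $\mathfrak{m}$, so $e \equiv T_0 \pmod{\mathfrak{m}}$ as well, and therefore $\bT_0 - e \in \mathfrak{m}\cdot \mathscr{T}_0$.

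With this congruence, $\bT_0$ becomes invertible in $\mathscr{T}_0$ via the formal geometric series in the $\mathfrak{m}$-adic (Novikov) topology:
$$
\bS_0 \; := \; \sum_{n \geq 0} (-1)^n (\bT_0 - e)^{*\, n},
$$
which converges termwise in $\mathscr{T}_0$ and satisfies $\bS_0 * \bT_0 = e$. Uniqueness is immediate from commutativity and associativity of $*$: if $\bS_0'$ is another inverse, then
$$
\bS_0' \;=\; \bS_0' * e \;=\; \bS_0' * (\bT_0 * \bS_0) \;=\; (\bS_0' * \bT_0) * \bS_0 \;=\; e * \bS_0 \;=\; \bS_0.
$$
I expect the main subtlety to be conceptual rather than analytic: one must keep the two identity elements carefully separated, recognizing that $\mathscr{T}_0$ is a unital commutative ring in its own right with unit $e \neq T_0$, rather than a subring of $Q_0 H(X)\otimes \mathscr{R}'$ sharing the same unit. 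Once this is set up correctly, the construction is formal.
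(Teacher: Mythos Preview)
Your proof is correct and follows essentially the same approach as the paper. Both arguments hinge on the observation that $\bT_0 \equiv T_0 \equiv e$ modulo the Novikov maximal ideal (the paper notes that the idempotents $\epsilon_j$ vanish along $u=0$, so $e = T_0 - \sum_j \epsilon_j \equiv T_0$), from which invertibility of $\bT_0$ in $\mathscr{T}_0$ follows formally; the paper phrases this as solving a linear system whose coefficient matrix $(c^i_{0j})$ reduces to the identity modulo $(x,y)$, while you package the same fact as a convergent geometric series in the $\mathfrak{m}$-adic topology.
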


\begin{proof}
By our constructions, the structure constants $c_{kj}^i(u, y, z)$ defined by
$$
\bT_k * \bT_j = \sum c_{kj}^i \bT_i
$$
are series in $u, y, z$. In particular, by writing $\bS_0 = \sum_i w^j \,\bT_j$, then $\bS_0$ can be solved explicitly using the relation $\bT_0 * \bT_j = \sum_i c_{0j}^i \bT_i$. 
Indeed, from \eqref{e:idem}, the identity $e$ in $\mathscr{T}$ is given by
$$
e = T_0 - \sum_{j = 0}^{d - 1} e_j = \sum_i\varphi^i \bT_i
$$
for some series $\varphi^i(u, y, z)$ in $u, y, z$. So we need to solve the $R' \times R'$ linear system of equations
$$
\sum_{j = 0}^{R' - 1} c_{0j}^i\, w^j = \varphi^i, \qquad i = 0, 1, \ldots, R' - 1.
$$
Notice that, by Lemma \ref{l:reg} and the property that $B_1 \equiv {\rm Id} \pmod{x, y}$,
$$
c_{0j}^i = \La \bT_0, \bT^i, \bT_j \Ra^X = \La T_0, \bT^i, \bT_j \Ra^X + \sum_k f_0^k(u, y) \La T_k, \bT^i, \bT_j \Ra^X
$$ 
is a series in $u, y$ with $f_0^k(0, 0) = 0$. Also $\La T_0, \bT^i, \bT_j \Ra^X = \Lb \bT_j, \bT^i \Rb = \delta^i_j$. This shows that $(c_{0j}^i)$ is invertible and the lemma is proved.
\end{proof}

\begin{definition} \label{d:modf}
We call $\bS_0$ the pseudo-inverse of $\bT_0$, which is the inverse of $\bT_0$ in $\mathscr{T}$, and we define the \emph{normalized frame} 
$$
\breve \bT_\mu := \bT_\mu * \bS_0
$$ 
on $\mathscr{T}$. 
\end{definition}

Along $\Tp \hat s$, by setting $j = 0$ in \eqref{e:Pk} we find
\begin{equation} \label{e:bT0}
\bT_0 * T_k = \sum_\mu \La \bT_0, T_k, \bT^\mu \Ra^X \bT_\mu = \sum_\mu \mathscr{J}_k^\mu \bT_\mu.
\end{equation}

Applying $\bS_0 *$ to \eqref{e:bT0}, Lemma \ref{l:q-inv} then leads to the important

\begin{proposition}[Basic transformation rule]
For $k \in [0, R' - 1]$, we have
\begin{equation} \label{e:FC}
T_k = \sum_{\mu = 0}^{R' - 1} \mathscr{J}_k^\mu \breve \bT_\mu \pmod{\mathscr{K}}.
\end{equation}
In particular, the normalized frame $\breve \bT_\mu$ is defined over $x, y$.
\end{proposition}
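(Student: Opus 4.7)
The plan is to apply $\bS_0 *$ to the relation \eqref{e:bT0}, namely $\bT_0 * T_k = \sum_\mu \mathscr{J}_k^\mu \bT_\mu$, and then reinterpret the resulting left-hand side via the ring decomposition from Proposition \ref{p:split}. By Lemma \ref{l:q-inv} we have $\bS_0 * \bT_0 = e$, the identity of $\mathscr{T}$, so the left-hand side becomes $e * T_k$; by Definition \ref{d:modf}, the right-hand side becomes $\sum_\mu \mathscr{J}_k^\mu \breve{\bT}_\mu$. It thus suffices to prove the congruence $T_k \equiv e * T_k \pmod{\mathscr{K}}$.

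This congruence is a direct consequence of the ring decomposition $Q_0 H(X) \otimes \mathscr{R}' \cong \mathscr{T} \times \mathscr{K}$ of Proposition \ref{p:split}. The identity $T_0$ of $H(X)$ splits uniquely as $T_0 = e + e^\perp$, with $e^\perp = \sum_j \epsilon_j \in \mathscr{K}$ the sum of idempotents coming from \eqref{e:idem}. Multiplying by $T_k$ and using that $\mathscr{K}$ is an ideal (again by Proposition \ref{p:split}) gives $T_k = T_0 * T_k = e * T_k + e^\perp * T_k$ with $e^\perp * T_k \in \mathscr{K}$, which is exactly \eqref{e:FC}.

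For the second assertion, note that $(\mathscr{J}_k^\mu)$ is an $R' \times R'$ matrix with entries in $\mathscr{R}' = \mathbb{C}[\![x,y]\!]$ and is invertible over $\mathscr{R}'$, since $\mathscr{J}_k^\mu \equiv \delta_k^\mu \pmod{NE(X')}$ by Definition \ref{d:Jac}. On the other hand, since the splitting $H(X) \otimes \mathscr{R}' = \mathscr{T} \oplus \mathscr{K}$ is defined over $\mathscr{R}'$ (cf.~the last paragraph of the proof of Proposition \ref{p:split}), the projection $\pi(T_k) \in \mathscr{T}$ of the constant class $T_k$ is an $\mathscr{R}'$-section of $\mathscr{T}$. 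Inverting the matrix relation $\pi(T_k) = \sum_\mu \mathscr{J}_k^\mu \breve{\bT}_\mu$ then expresses each $\breve{\bT}_\mu$ as an $\mathscr{R}'$-linear combination of the $\pi(T_k)$'s, proving that $\breve{\bT}_\mu$ lies in $\mathscr{T}(\mathscr{R}')$.

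The only real bookkeeping issue is to keep track of whether objects live over $\mathscr{R}'$ or over the cyclic extension $\widetilde{\mathscr{R}'}$, which matters here because $\bT_\mu$ and $\bS_0$ a priori live only over $\widetilde{\mathscr{R}'}$; this descent is resolved transparently by the $\mathscr{R}'$-invertibility of $(\mathscr{J}_k^\mu)$. There is no serious technical obstacle beyond recognizing that the ring-theoretic splitting of the identity into $e + e^\perp$ converts the quantum relation \eqref{e:bT0} into the desired linear-algebraic statement modulo $\mathscr{K}$.
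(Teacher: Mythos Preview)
Your proof is correct and follows exactly the approach indicated in the paper: apply $\bS_0 *$ to \eqref{e:bT0}, use Lemma \ref{l:q-inv} to turn the left-hand side into $e * T_k$, and then invoke the ring decomposition of Proposition \ref{p:split} to rewrite $e * T_k$ as the $\mathscr{T}$-component of $T_k$. Your treatment of the second assertion (descent from $\widetilde{\mathscr{R}'}$ to $\mathscr{R}'$ via the invertibility of $(\mathscr{J}_k^\mu)$ over $\mathscr{R}'$) spells out explicitly what the paper leaves implicit.
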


\subsubsection{Special quantum invariance}

With \eqref{e:FC}, then equation \eqref{e:Pk} becomes 
\begin{equation} \label{e:eq0}
\La \bT_\mu, \bT^i, \bT_j\Ra^X (\Tp \hat \bs) = \La T'_\mu, T'^i, T'_j \Ra^{X'} (\sigma(\hat \bs)). 
\end{equation}

\begin{lemma}
With respect to the pairing $\breve{\bf g}_{i j} = (\breve\bT_i, \breve\bT_j )$, the dual frame $\breve \bT^i := \sum_j \breve{\bf g}^{i j} \breve\bT_j$ is given by $\breve \bT^i = \bT^i * \bT_0$.
\end{lemma}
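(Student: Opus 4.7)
The strategy is direct verification via the Frobenius property of the pairing. What needs to be shown is that $\bT^i * \bT_0$ pairs with $\breve\bT_j = \bT_j * \bS_0$ into $\delta^i_j$; by non-degeneracy of $\breve{\bf g}_{ij}$ on $\mathscr{T}$ this uniquely characterizes the dual frame.

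First, I would unfold the left-hand pairing using Frobenius. By Lemma \ref{metrical} restricted to $z=0$ (where the polarized pairing agrees with the Poincar\'e pairing), the quantum product is self-adjoint with respect to $(\,,\,)$. Applying this twice gives
\[
(\bT^i * \bT_0,\; \bT_j * \bS_0) \;=\; (\bT^i,\; \bT_0 * \bT_j * \bS_0) \;=\; (\bT^i,\; \bT_j * (\bT_0 * \bS_0)).
\]
By Lemma \ref{l:q-inv} (Definition \ref{d:modf}), $\bT_0 * \bS_0 = e$ is the identity section of $\mathscr{T}$, hence $\bT_j * e = \bT_j$ since $\bT_j \in \mathscr{T}$. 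Therefore the expression reduces to $(\bT^i, \bT_j)$.

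Next, I would invoke the orthogonal splitting $\mathscr{T} \perp \mathscr{K}$ established in Proposition \ref{p:5.3}. This implies that the matrix ${\bf g}_{ij} = (\bT_i, \bT_j)$ is block diagonal in the decomposition $H(X) = \mathscr{T}_0 \oplus \mathscr{K}_0$ at $z=0$, so for $i, j \in [0, R' - 1]$ the dual $\bT^i = \sum_l {\bf g}^{il}\bT_l$ remains inside $\mathscr{T}$ and satisfies $(\bT^i, \bT_j) = \delta^i_j$ on the nose. Combining with the computation above yields
\[
(\bT^i * \bT_0,\; \breve\bT_j) \;=\; \delta^i_j,
\]
which is exactly the defining property of $\breve\bT^i$, proving $\breve\bT^i = \bT^i * \bT_0$.

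No real obstacle is anticipated here: the argument is essentially the Frobenius manipulation, with the only subtlety being to verify that the pairing identities survive the restriction from the full polarized frame to the $\mathscr{T}$-block at $z = 0$, which is guaranteed by the orthogonality of $\mathscr{T}$ and $\mathscr{K}$ together with the fact that $\bT_0 * \bS_0$ acts as the identity on $\mathscr{T}$ and annihilates $\mathscr{K}$.
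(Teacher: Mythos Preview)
Your proof is correct and follows essentially the same route as the paper: both apply the Frobenius property $(a*b,c)=(b,a*c)$ to collapse $\bS_0*\bT_0$ to the identity $e$ on $\mathscr{T}$, reducing the pairing to $(\bT^i,\bT_j)=\delta^i_j$. Your added remark that $\bT^i\in\mathscr{T}$ for $i\in[0,R'-1]$ via the block-diagonality of ${\bf g}_{ij}$ under $\mathscr{T}\perp\mathscr{K}$ is a useful clarification the paper leaves implicit.
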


\begin{proof}
Indeed, 
$$
(\bT_j * \bS_0,\bT^i * \bT_0 ) = (\bT_j * \bS_0 * \bT_0, \bT^i ) = (\bT_j, \bT^i ) = \delta^i_j.
$$
Here, the Frobenius property on the pairing is used. 
\end{proof}

Hence for any class $a$ we have
\begin{equation} \label{e:ch}
\begin{split}
\La a, \breve \bT_j , \breve \bT^i \Ra &= ( a * \breve \bT_j, \breve\bT^i ) = ( a * \bT_j * \bS_0 , \bT^i * \bT_0 ) \\
&= ( a * \bT_j * \bS_0 * \bT_0, \bT^i ) = ( a * \bT_j, \bT^i ) \\
&= \La a, \bT_j, \bT^i \Ra.
\end{split}
\end{equation}
Together with \eqref{e:eq0}, we arrive at a simple statement:

\begin{theorem} \label{t:small}
Under the $\Bbb C[\![NE(X')]\!]$-linear map 
$$
\sum a^i \breve \bT_i \mapsto \sum a^i T'_i,
$$ 
the quantum product on $\mathscr{T}$ at $\Tp \hat \bs \in H^{\le 2}(X)$ is isomorphic to the quantum product on $H(X')$ at $\sigma(\hat \bs) \in H(X') \otimes \Bbb C[\![NE(X')]\!]$. Namely
\begin{equation} \label{e:dframe}
\La \breve\bT_\mu, \breve\bT^i, \breve\bT_j\Ra^X (\Tp\hat \bs) = \La T'_\mu, T'^i, T'_j \Ra^{X'} (\sigma(\hat \bs))
\end{equation}
for all $0 \le i, j, \mu \le R' - 1$. 
\end{theorem}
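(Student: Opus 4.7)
My plan is to prove the numerical identity \eqref{e:dframe} and then deduce the ring-isomorphism claim, which is automatic because the triple correlators paired against the Poincar\'e metric determine the structure constants of both quantum products under consideration.

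To prove \eqref{e:dframe}, I would chain together the two identities set up immediately before the statement. First, the three-point identity \eqref{e:eq0}, read in the form that actually comes out of substituting the basic transformation rule \eqref{e:FC} into the polynomial identity \eqref{e:Pk} and inverting the matrix $(\mathscr{J}_k^\mu)$, gives
$$
\La \breve\bT_\mu, \bT^i, \bT_j\Ra^X(\Tp\hat \bs) = \La T'_\mu, T'^i, T'_j\Ra^{X'}(\sigma(\hat \bs)).
$$
Second, setting $a=\breve\bT_\mu$ in the pairing-adjustment identity \eqref{e:ch} yields
$$
\La \breve\bT_\mu, \breve\bT_j, \breve\bT^i\Ra^X = \La \breve\bT_\mu, \bT_j, \bT^i\Ra^X.
$$
Combining the two lines (and using the symmetry of three-point invariants) gives \eqref{e:dframe}. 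The ring-isomorphism half of the theorem then drops out: since the Poincar\'e-dual frames $\breve\bT_\mu,\breve\bT^i$ on $\mathscr{T}$ and $T'_\mu,T'^i$ on $H(X')$ are identified, equality of triple correlators is equivalent to equality of structure constants of the respective small quantum products, so the $\mathbb{C}[\![NE(X')]\!]$-linear map $\sum a^i\breve\bT_i\mapsto \sum a^i T'_i$ intertwines them.

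The genuine substance of the argument lies upstream. The compatibility \eqref{e:Pk}, which propagates the divisor-class identity to arbitrary classes, rests on Lemma \ref{l:D-mod}, i.e.\ the WDVV propagation absorbed into the coordinate change $\sigma(\hat\bs)$ supplied by the generalized mirror transform; discarding the $\mathscr{K}$-component of $T_k$ inside any triple correlator with $\bT^i,\bT_j\in\mathscr{T}$ uses Proposition \ref{p:split}, namely the orthogonal-ideal decomposition $\mathscr{T}\oplus\mathscr{K}$ which forces $\mathscr{K}*\mathscr{T}=0$; and the pairing adjustment \eqref{e:ch} rests on the construction of the pseudo-inverse $\bS_0$ (Lemma \ref{l:q-inv}), together with the telescoping $\bS_0*\bT_0=e$ applied to the dual frame $\breve\bT^i=\bT^i*\bT_0$. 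With all these in hand, there is no new obstacle at the level of Theorem \ref{t:small}: it is a short corollary. The only bookkeeping point one must be careful about is that the absorbed factors $\bS_0$ and $\bT_0$ telescope cleanly precisely because $\breve\bT_\mu*\bT_j$ remains in the ideal $\mathscr{T}$, so that $e$ acts as the identity on the intermediate product.
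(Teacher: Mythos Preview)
Your proposal is correct and follows exactly the paper's approach: the paper presents Theorem \ref{t:small} as the one-line combination of \eqref{e:eq0} with \eqref{e:ch}, and you have spelled out precisely that chain. You also correctly read \eqref{e:eq0} with $\breve\bT_\mu$ rather than $\bT_\mu$ in the first slot, which is what actually results from plugging \eqref{e:FC} into \eqref{e:Pk} and inverting $(\mathscr{J}_k^\mu)$; the paper's displayed \eqref{e:eq0} appears to contain a typographical slip on this point.
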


The ``subring'', or rather ``ideal'', $(\mathscr{T}, *)$ of $Q_0 H(X)$ is not isomorphic to $Q_0 H(X')$ since $\sigma(0) \ne 0$ (cf.~Corollary \ref{c:GMT} for contributions from the extremal ray). Nevertheless a standard induction on Mori cone implies that

\begin{corollary}
The big quantum cohomology $QH(X')$ can be effectively computed from $QH(X)$ through equation \eqref{e:dframe}.
\end{corollary}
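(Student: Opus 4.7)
The plan is to combine Theorem~\ref{t:small}, which identifies the quantum products along the one-parameter family $\sigma(\hat \bs) \subset H(X')$ with $\hat \bs \in H^{\le 2}(X')$, with a reconstruction argument that propagates knowledge of three-point divisor insertions to all Gromov--Witten invariants. The key geometric fact is that the local model $X' = P_{P^{r'}}(\so(-1)^{r+1}\oplus \so)$ is a projective bundle over a projective space, so its cohomology ring is generated by the divisor classes $h'$ and $\xi'$; this is precisely the hypothesis for the Kontsevich--Manin reconstruction theorem.

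The first step is to observe that equation \eqref{e:dframe} of Theorem~\ref{t:small}, combined with the generalized mirror transform $\sigma: H^{\le 2}(X') \to H(X')\otimes \mathscr{R}'$, determines the three-point function
$\La T'_\mu, T'^i, T'_j\Ra^{X'}(\sigma(\hat \bs))$ as an explicit formal series in $\hat \bs$. Since $\sigma(\hat \bs) \equiv \hat \bs \pmod{NE(X')}$, the map $\sigma$ is a formal isomorphism onto its image, so after inversion we recover $\La T'_\mu, T'^i, T'_j \Ra^{X'}(\hat \bs)$ as a formal power series along the small parameter space $H^{\le 2}(X')$. This is precisely the datum of the \emph{small} quantum cohomology $Q_0 H(X')$.

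The second step is the inductive reconstruction. I would proceed by double induction, first on the degree $\beta' \in NE(X')$ (well-ordered since $X'$ is, along with all local models, projective) and, for each fixed $\beta'$, on the number of marked points $n$. The base cases $n = 3$ are handled by Step~1 for divisor insertions. For general $T'_{\mu}$ not in $H^{\le 2}$, write $T'_\mu$ as a classical polynomial in divisors; the WDVV equations with divisor insertions then express $\La T'_{\mu}, T'_i, T'_j\Ra$ at $\hat \bs$ in terms of three-point invariants with strictly smaller insertions (which are handled by induction on $\deg T'_\mu$) combined with three-point invariants of lower curve class (which are handled by induction on $\beta'$). For $n \ge 4$, the derivative $\p_{\mu_4}\cdots\p_{\mu_n}$ of a three-point correlator evaluated along $\hat \bs$ recovers the $n$-point correlator after using the divisor axiom to replace non-divisor marked points by divisor marked points modulo lower-order terms; again these lower-order terms fall under the induction.

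The main obstacle will be ensuring that the double induction closes correctly: at each stage one must verify that the right-hand sides of the WDVV and divisor-axiom identities involve only invariants already determined in previous inductive steps. The delicate point is that when WDVV expresses a four-point function as a sum of products of three-point functions, one of the three-point functions may have insertion degree equal to the original; this is resolved by standard bookkeeping using the fact that such a term always comes paired with an invariant of strictly lower total degree or lower curve class in the other factor (this is the usual Kontsevich--Manin argument, adapted from the $P^n$ case). Once this is in place, the entire reconstruction is manifestly effective because every inductive step is the evaluation of a finite sum of rational operations applied to quantities previously computed from data on $QH(X)$.
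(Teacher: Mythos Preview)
Your approach is essentially the paper's own: the paper dispatches this corollary with the single remark ``a standard induction on Mori cone implies that'', and your double induction on $\beta' \in NE(X')$ and on $n$ (Kontsevich--Manin reconstruction, valid since $H(X')$ is generated by $h', \xi'$) is exactly that.

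One imprecision is worth flagging. In Step~1 you write that since $\sigma(\hat \bs) \equiv \hat \bs \pmod{NE(X')}$, ``after inversion we recover $\La T'_\mu, T'^i, T'_j\Ra^{X'}(\hat \bs)$''. This is not literally correct: $\sigma$ maps $H^{\le 2}(X')$ into the full $H(X')$, not back into $H^{\le 2}(X')$, so knowing $F\circ\sigma$ does not by itself give $F|_{H^{\le 2}}$. The difference $F(\sigma(\hat\bs)) - F(\hat\bs)$ involves derivatives of $F$ in non-divisor directions, i.e.\ $n\ge 4$ point invariants, evaluated at points $\sigma(\hat\bs)$ with strictly higher Novikov degree. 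The fix is to fold Step~1 into the Mori-cone induction of Step~2: at level $\beta'_0$, the coefficient of $q'^{\beta'_0}$ in \eqref{e:dframe} gives the three-point invariant $\langle T'_\mu, T'^i, T'_j\rangle_{\beta'_0}$ plus correction terms that are higher-point invariants of curve class $< \beta'_0$, already known by the outer induction. So the $n=3$ case is not a separate base case established by ``inversion'' but is itself extracted inductively from \eqref{e:dframe}. With that adjustment your argument closes.
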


\subsection{Non-linear reconstructions}

In this subsection we will complete the proof of Theorem \ref{t:main} by constructing the embedding $\widehat \Psi$ with the imposed properties.

\subsubsection{Remarks on reconstructions over the big parameter spaces}

The complication in dealing with the GMT $\hat \bs \mapsto \sigma(\hat \bs)$ lies on the fact that it is a graph over the small parameters instead of an invertible transformation. The basic idea to resolve the problem is to apply suitable reconstruction theorems on $X$ and $X'$ respectively and to study the compatibility between them.

When the total cohomology $H$ is generated by $H^2$ under cup product, the reconstruction from 3-point genus zero GW invariants to all $n$-point genus zero invariants follows from the WDVV equations as done by Kontsevich--Manin. Under the same condition, a version in the setup of abstract quantum $\mathscr{D}$-modules was formulated and carried out by Iritani in \cite[Theorem 4.9]{Iritani}, which says that the ``abstract big QDM'' is naturally determined by the ``abstract small QDM''. The abstract version is suitable in our current context since it does not require inductions on the Mori cone. 

To trace the reconstruction procedure in all directions of $H'$ consistently, we set $\hat \bs = 0$ and keep only the Novikov variables $\{q'^1, q'^2\} = \{q^{\ell'}, q^{\gamma'}\}$ in equation \eqref{e:dframe} as the starting point. 
%\footnote{The resulting connections is an \emph{abstract small QDMs} in the sense of \cite[Definition 4.1]{Iritani}.} 
Namely we \emph{decouple} the roles played by $\hat \bs$ and $q'^a$'s back to the status they are in the definition in \eqref{e:q-prod}. 

Denote the resulting frames by $\Bbb T_i(q')$ and let $\sigma_0 = \sigma_0(q')$ be the generalized mirror point at $\hat \bs = 0$. Equation \eqref{e:gmt'}, via \eqref{e:ch}, takes the form
\begin{equation} \label{e:ori}
\langle T_a, \Bbb T_j, \Bbb T^i \rangle^X = \sum_\mu \Big( \delta^\mu_a + q'^a \frac{\p\sigma_0^\mu}{\p q'^a} \Big) \La T'_\mu, T'_j, T'^i\Ra^{X'} (\sigma_0).
\end{equation}
Here $\delta^\mu_a$ is inserted since $\p \sigma^\mu/\p s^a \equiv \delta^\mu_a \mod NE(X')$. Also \eqref{e:dframe} becomes
\begin{equation} \label{e:start}
\langle \Bbb T_\mu, \Bbb T_j, \Bbb T^i\rangle^X = \La T'_\mu, T'_j, T'^i \Ra^{X'} (\sigma_0).
\end{equation}
We regard \eqref{e:ori} as the connection matrix $A_a(q', \bs)$ at $\bs = 0$ for
$$
z \nabla_a = z \p_a - A_a \equiv z q'^a \frac{\p}{\p q'^a} - A_a, 
$$ 
and \eqref{e:start} as the connection matrix $\Omega_\mu(q', \bs)$ at $\bs = 0$ for 
$$
z\nabla_\mu = z\p_\mu - \Omega_\mu \equiv z\frac{\p}{\p s^\mu} - \Omega_\mu.
$$
Notice that the coordinates $s^0, \ldots, s^{R' - 1}$ are centered at $\sigma_0$ when viewing on the $H(X')$ side and centered at $0$ on the $H(X)$ side. Also while the matrices $A_a$ and $\Omega_\mu$ are \emph{identically the same} for $X$ and $X'$, their meaning in quantum product are taken in completely different manners.  

The flatness of $\nabla$ is equivalent to the WDVV equations
\begin{equation}
\begin{split}
& [A_a, A_b] = [A_a, \Omega_\mu] = [\Omega_\mu, \Omega_\nu] = 0, \\
& \p_a A_b = \p_b A_a, \quad \p_a \Omega_\mu = \p_\mu A_a, \quad \p_\mu \Omega_\nu = \p_\nu \Omega_\mu.
\end{split}
\end{equation}
Consider the ideal $\frak m = (s^0, s^1, \ldots, s^{R' - 1})$. By induction on $k \in \Bbb N$, we may 
\begin{itemize}
\item[(i)] solve $A_a(q', \bs) \pmod{\frak m^k}$ from $\p_\mu A_a = \p_a \Omega_\mu \pmod{\frak m^{k - 1}}$, and then 
\item[(ii)] solve $\Omega_\mu (q', \bs) \pmod{\frak m^k}$ as a polynomial in $\Omega_j (q', \bs)$'s $\pmod{\frak m^{k - 1}}$ and $A_a$'s $\pmod{\frak m^k}$. 
\end{itemize}

The starting case $k = 1$ for (ii) is essentially Theorem \ref{t:small}. The relevant formulas are \eqref{e:Pkmu} and \eqref{e:Pk} used in the proof of Lemma \ref{l:q-inv}. Indeed, let 
$$
\mathscr{I} (q') = (\mathscr{I}^k_\mu) := \mathscr{J}^{-1}
$$ 
be the inverse matrix of $(\mathscr{J}_k^\mu)$ which depends only on $q'$'s. Then at $\sigma_0$,
\begin{equation} \label{e:poly}
\begin{split}
T'_\mu *' &= \sum_k \mathscr{I}^k_\mu \, P_k(\sigma_* (\xi' - h')*', \sigma_*\xi' *') \\
&= \sum_k \mathscr{I}^k_\mu \, P_k(A_2 - A_1, A_2),
\end{split}
\end{equation}
and $\Omega_\mu(q', 0)$ is given by \eqref{e:dframe} via \eqref{e:FC}.

Thus it remains to understand the geometric meanings on both sides under the WDVV reconstruction. On $X'$ this is standard and it leads to 
$$
(\Omega_\mu)_j^i (q', \bs) = \La T'_\mu, T'_j, T'^i \Ra^{X'} (\sigma_0 + \bs).
$$ 
In particular $(\Omega_\mu)_0^i (q', \bs) = \delta_\mu^i$ since $T'_0$ is the identity. 

On $X$ the reconstruction is \emph{not linear}---in each step of (ii) the identity section $\Bbb T_0(q', \bs) \pmod{\frak m^k}$ receives new correction terms. With this modification been done for each $k$, which is hard, the resulting structure should then lead to deformations of the embedding $\Tp :H(X') \hookrightarrow H(X)$ to certain $\widehat \Tp(q', \bs)$ which relates quantum products of $X$ and $X'$. 

%The idea is to consider the genus zero potential function $F = F^X_0$ along the image $\M$ of the proposed imbedding
%$$
%F(\widehat \Tp(q', \bs, z)),
%$$
%and study the ``product'' on the tangent space $T_{\widehat{\Tp}(q', \bs, z)} \M$ defined by
%\begin{equation} \label{e:proposal}
%\widehat \Tp_i * \widehat \Tp_j = \sum_k \hat A_{ij}^k \, \widehat \Tp_k = \sum_l \hat g^{lk} \,\p^3_{ijl} (F \circ \widehat{\Tp})  \, \widehat \Tp_k,
%\end{equation}
%where $\hat g_{ij} := \Lb \widehat \Tp_i, \widehat \Tp_j \Rb$ is the induced first fundamental form. 
%
%There is no guarantee that such a product is associative. Also it is unclear if the structural constants $\hat A_{ij}^k$ at $T_p\M$ are GW invariants of the tangent frames at $p$, as there might be contributions from higher derivatives, and we might need to use covariant derivatives to formulate \eqref{e:proposal}. 
%
%The WDVV equations give strong constraints on $\widehat \Tp$ to eliminate the above issues. Indeed, \eqref{e:start} (or Theorem \ref{t:small}) shows that the first order approximation of the proposal in \eqref{e:proposal} holds with $\widehat{\Tp}(q', \bs, z) := \sum s^\mu \Bbb T_\mu$. 
%
%To proceed to the full construction of the embedding $\widehat{\Tp}$, one way is by induction modulo $\frak m^k$ under the WDVV reconstruction procedure. 

When the GW theory under consideration is analytic, alternatively we may view WDVV as a Frobenius integrability condition in the context of integrable distributions and to construct $\widehat\Tp$ through certain ``canonical coordinates''. We will take this approach in the next section, and it is best described in terms of the notion of $F$-manifolds. 

\subsubsection{Integrable distribution and the canonical coordinates} \label{ss:F}

Recall that an $F$-manifold $M$ is a complex manifold equipped with a commutative and associative product structure on each tangent space $T_p M$, such that a WDVV-type integrability condition is forced when $p$ varies. In the context of quantum cohomology, this is simply the structure which remembers the quantum product but forgets the metric $g_{ij}$, and with a coordinate-free form of the WDVV (integrability) equations. 

Indeed, viewing the quantum product $*$ as a $(2, 1)$ tensor, Hertling and Manin (cf.~\cite[Definition 2.8, Theorem 2.14, 2.15]{Hertling} had shown that the WDVV equations can be rewritten as 
\begin{equation} \label{e:HM}
L_{X * Y} * = X * L_Y * + Y * L_X *
\end{equation}
for any local vector fields $X$ and $Y$, where $L$ denotes the Lie derivatives. In explicit terms this means that for any local vector fields $X, Y, Z, W$ we have
\begin{equation} \label{e:HMe}
\begin{split}
&[X*Y, Z*W] - [X*Y, Z]*W - [X*Y, W]*Z \\
&\quad = X*[Y, Z*W] - X*[Y, Z]*W - X*[Y, W]*Z \\
&\quad\qquad + Y*[X, Z*W] - Y*[X, Z]*W - Y*[X, W]*Z. 
\end{split}
\end{equation}

To apply it to our flip situation, we denote by $\mathcal{K}$ the irregular eigenbundle and its orthogonal complement $\mathcal{T} = \mathcal{K}^\perp$ the regular eigenbundle which extend the corresponding $\mathscr{K}$ and $\mathscr{T}$ from $\bs = 0$ to the big parameter space.

\begin{lemma} \label{l:cont}
Both $\mathcal{K}$ and $\mathcal{T}$ and the irregular/regular decomposition of the big quantum product on $T H_{\mathscr{R}'}$ are defined over the big parameter space $H_{\mathscr{R}'}$ over a punctured neighborhood of $q^{\ell'} = 0$.
\end{lemma}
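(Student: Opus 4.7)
The plan is to extend the decomposition $\mathscr{T}\oplus \mathscr{K}$ of Proposition~\ref{p:split} from the small parameter slice to the full big parameter space $H_{\mathscr{R}'}$, by combining Iritani's analyticity theorem for the big quantum cohomology on toric manifolds with Kato-type analytic perturbation of spectral projectors. First I would invoke Remark~\ref{r:toric} to obtain that the operators $h*_\bt$ and $\xi*_\bt$ depend analytically on $(q,\bt)$ in a neighborhood of $\bt=0$. Along the small slice and in a punctured neighborhood of $q^{\ell'}=0$, Lemma~\ref{l:irr-root} tells us that $h*_{small}$ carries $d$ irregular eigenvalues of order $|q^{\ell'}|^{-1/d}$, well separated both from each other and from the $R'$ bounded regular eigenvalues. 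Since the corrections induced by turning on $\bt$ are uniformly bounded on any compact $U\subset H(X)$ while the irregular cluster diverges as $q^{\ell'}\to 0$, the spectral gap between the two clusters persists for every $\bt\in U$ provided $|q^{\ell'}|$ is small enough.

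Next I would define $\mathcal{K}(q,\bt)$ by the Riesz spectral projector
\begin{equation*}
\Pi_{\mathcal{K}}(q,\bt) = \frac{1}{2\pi i}\oint_\Gamma (\zeta - h*_\bt)^{-1}\,d\zeta,
\end{equation*}
where $\Gamma$ is a smooth contour enclosing only the irregular cluster. This yields an analytic rank-$d$ subbundle whose restriction to the small slice recovers $\mathscr{K}$. Setting $\mathcal{T}:=\mathcal{K}^\perp$ with respect to the Poincar\'e pairing and using Frobenius self-adjointness of $h*_\bt$ (eigenspaces for disjoint spectral clusters are then automatically orthogonal), one obtains $TH_{\mathscr{R}'}=\mathcal{T}\oplus\mathcal{K}$. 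Commutativity of the big quantum ring forces every $T_\mu*_\bt$ to commute with $h*_\bt$ and hence to preserve both summands, so the splitting is compatible with the full quantum product, as needed for the subsequent integrability step (Proposition~\ref{p:int-mfd}).

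The main obstacle I anticipate is explaining why the neighborhood must be \emph{punctured}: at $q^{\ell'}=0$ the irregular eigenvalues run off to infinity and no uniform Riesz contour exists, reflecting the genuine essential singularity of $\T\nabla$ along that divisor in the big parameter regime (cf.~the footnote in \S\ref{s:0} concerning the adic alternative). On any compactum in $\bt$, however, the growth rate $|q^{\ell'}|^{-1/d}$ dominates uniformly, so once $q^{\ell'}\neq 0$ the extension of $\mathscr{T}\oplus\mathscr{K}$ from the small slice to the full $\bt$-slab is automatic, which gives the claimed decomposition over $H_{\mathscr{R}'}$ in a punctured neighborhood of $q^{\ell'}=0$.
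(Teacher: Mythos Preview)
The paper states this lemma without supplying a proof; the remark following Proposition~\ref{p:int-mfd} makes clear that the authors are relying on Iritani's analyticity theorem for toric quantum cohomology together with the spectral separation already established on the small-parameter slice in \S\ref{ss:decomp}. Your Riesz-projector construction is a natural and correct way to make that implicit argument precise, and it is exactly in the spirit of what the paper has in mind.

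One step deserves more care. Your sentence ``the corrections induced by turning on $\bt$ are uniformly bounded on any compact $U\subset H(X)$'' tacitly assumes that this bound is also uniform in $q^{\ell'}$ as $q^{\ell'}\to 0$. But the paper itself warns (in \S\ref{s:0}, and in the footnote on the adic alternative you cite) that $\T\nabla$ has an \emph{essential} singularity along $q^{\ell'}=0$ once the big parameters are switched on; in other words $h*_\bt - h*_{small}$ need not stay bounded as $q^{\ell'}\to 0$. What your Riesz projector really gives is, for each fixed $q^{\ell'}\ne 0$, a $\bt$-neighbourhood of $0$ on which the decomposition persists, with that neighbourhood possibly shrinking as $q^{\ell'}\to 0$. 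This is already enough to define $\mathcal{T}$ and $\mathcal{K}$ as analytic subbundles over the punctured locus and to run Proposition~\ref{p:int-mfd}, so the lemma as stated is fine; but your phrasing suggests a uniform product domain $U\times\{0<|q^{\ell'}|<\epsilon\}$, which is a stronger claim. If you want that stronger uniformity you should argue order by order in $\bt$: the Fano condition (as in Lemma~\ref{l:Fano}) makes each Taylor coefficient of $h*_\bt$ polynomial in $q_1,q_2$, and one can then compare the resulting finite pole orders at $q^{\ell'}=0$ against the $|q^{\ell'}|^{-1/d}$ growth of the irregular cluster from Lemma~\ref{l:irr-root}.
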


\begin{proposition} \label{p:int-mfd}
The regular eigenbundle $\mathcal{T}$ is an integrable distribution of the relative tangent bundle $T H_{\mathscr{R}'}$. 

In particular, the image of $\widehat{\Tp}$ is the integrable submanifold $\M$ (over $\mathscr{R}'$) containing the slice $(q^{\ell'} \ne 0, \bt = 0)$ which contains the classical correspondence when modulo $\mathscr{R}'$.
\end{proposition}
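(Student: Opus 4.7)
The plan is to exploit the generic semisimplicity of the big quantum product, reduce to a statement about canonical coordinates on $F$-manifolds, and then apply the Frobenius theorem.

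First, since $X$ is a toric Fano manifold, its big quantum cohomology is convergent and generically semisimple by Remark \ref{r:toric}. Combined with Lemma \ref{l:cont}, which extends the decomposition $T H_{\mathscr{R}'} = \mathcal{T} \oplus \mathcal{K}$ from the small locus $\bs = 0$ to the big parameter space over a punctured neighborhood of $q^{\ell'} = 0$, we obtain at a generic point $p$ a splitting $T_p H_{\mathscr{R}'} = \bigoplus_{i=0}^{R-1} \Bbb C\,\epsilon_i$ into eigenlines spanned by the local idempotents of the quantum product. By the eigenvalue analysis of Lemma \ref{l:irr-root}, extended from small to big parameters by the continuity of simple eigenvalues, exactly $R'$ of these idempotents have eigenvalue of $h*$ bounded as $q^{\ell'} \to 0$ and locally generate $\mathcal{T}$, while the remaining $d$ have eigenvalues with a pole of type $\omega^j (q^{\ell'})^{-1/d}$ and locally generate $\mathcal{K}$.

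Next I would invoke the classical theorem of Hertling--Manin (\cite{Hertling}, and the Hertling--Manin identity \eqref{e:HMe} already recalled in the excerpt) that on a semisimple $F$-manifold the idempotent vector fields commute pairwise, $[\epsilon_i, \epsilon_j] = 0$, hence arise as coordinate vector fields $\epsilon_i = \p/\p u^i$ for canonical coordinates $(u^0, \ldots, u^{R-1})$. Since $\mathcal{T}$ is locally the span of those $\epsilon_i$ that are regular, it is the joint kernel of the $1$-forms $du^{R'}, \ldots, du^{R-1}$, and therefore integrable; the same argument applies to $\mathcal{K}$. The submanifold $\M$ is then the Frobenius leaf tangent to $\mathcal{T}$ through the curve $(q^{\ell'} \ne 0, \bt = 0)$. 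Since the $\Tp$-corrected quantum frame of Definition \ref{d:q-frame} reduces modulo $\mathscr{R}'$ to the orthogonal splitting of Lemma \ref{l:1}, this slice is tangent to the classical image $\Tp H(X') \subset H(X)$ in the appropriate limit, so the resulting $\M$ realizes the desired extension of $\Tp$ to $\widehat{\Tp}$.

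The main obstacle I anticipate is the consistent global identification of the regular versus irregular idempotents throughout the punctured neighborhood of $q^{\ell'} = 0$: canonical coordinates are only guaranteed locally, and the eigenvalue labels could in principle be permuted by monodromy around the non-semisimple locus. This is controlled by the sharp dichotomy in pole order, namely that the irregular eigenvalues are precisely those exhibiting an unbounded singularity at $q^{\ell'} = 0$ (as made explicit in Lemma \ref{l:K} and \S \ref{s:5.2}), a property stable under analytic continuation within the punctured disc and therefore giving a well-defined global subbundle $\mathcal{T}$.
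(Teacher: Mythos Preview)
Your proof is correct for the local model, but the paper takes a shorter and more general route. Rather than invoking full semisimplicity to get commuting idempotents on all of $H_{\mathscr{R}'}$ and then recognizing $\mathcal{T}$ as a coordinate plane, the paper applies the Hertling--Manin identity \eqref{e:HMe} directly: for $X, Z$ valued in $\mathcal{T} = \mathcal{K}^\perp$ and idempotents $e_i, e_j \in \mathcal{K}$, the ideal orthogonality $\mathcal{T} * \mathcal{K} = 0$ collapses \eqref{e:HMe} to $0 = -X*Z*[e_i,e_j] - \delta_{ij}\, e_j*[X,Z]$; setting $i=j$ yields $e_j*[X,Z]=0$ for every $j$, so $[X,Z] \in \mathcal{K}^\perp$. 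The advantage of the paper's argument is that it needs only $\mathcal{K}$ to be semisimple --- automatic here since the $d$ irregular eigenvalues are distinct --- and makes no demand on $\mathcal{T}$. As the remark following the proposition points out, this is what allows the argument to extend to global flips, where full semisimplicity of $QH(X)$ is not known; your approach, which rests on Iritani's toric semisimplicity, would not carry over.
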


\begin{proof}
Let $X, Z$ be any two local vector fields valued in $\mathcal{T} = \mathcal{K}^\perp$. Let $Y = e_i$ and $W = e_j$ be two idempotents valued in $\mathcal{K}$. Since $a * b = 0$ for any $a \in \mathcal{K}$ and $b \in \mathcal{K}^\perp$, \eqref{e:HMe} becomes
\begin{equation}
0 = - X * Z * [e_i, e_j] - \delta_{ij} e_j * [X, Z]. 
\end{equation}
Let $i = j$ we conclude that $e_j * [X, Z] = 0$ for all $j$. Hence $[X, Z] \in \mathcal{K}^\perp$.
\end{proof}

\begin{remark}
The above proof requires only that $\mathscr{K}$ contains no nilpotent sections, i.e.~generically semi-simple. Hence Proposition \ref{p:int-mfd} works in the global case as well, though in the formal setting. In the local case all the local models are toric and the analyticity is known (by Iritani), thus the Frobenius theorem needed is the classical one. In the global case we need to invoke the Frobenius theorem in the formal setting.  
\end{remark}

Now we use the full strength of the local model structure. The quantum product on the Frobenius manifold $H(X') \otimes \mathscr{R'}$ is semi-simple. Deonte by the idempotent vector fields on $H(X') \otimes \mathscr{R}'$ by $v'_0, \ldots, v'_{R' - 1}$. A well-known result of Dubrovin \cite[Main Lemma (3.47)]{Du} says that canonical coordinates exist. In our setting, we apply it in a family in $q'$ with center at $\sigma_0(q')$:

\begin{lemma}  
We have $[v'_i, v'_j] = 0$ for all $0 \le i, j \le R' - 1$. Hence the corresponding \emph{canonical coordinates} $u'^0, \ldots, u'^{R' - 1}$ satisfying 
$$
(u'^i(q', \bs = 0)) = \sigma_0(q')
$$ 
and $v'_i = \p/\p u'^i$ exist. 
\end{lemma}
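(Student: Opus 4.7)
The proof falls into two parts: verifying that the idempotents commute, and then constructing the canonical coordinates with the prescribed normalization.

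For the commutation $[v'_i, v'_j] = 0$, the plan is to apply the Hertling--Manin integrability equation \eqref{e:HMe}, which holds since $QH(X')$ is a Frobenius manifold (in particular an $F$-manifold), analytic in a punctured neighborhood of $q^{\ell'} = 0$ thanks to Iritani's convergence result for toric manifolds (Remark~\ref{r:toric}). Concretely, I would expand \eqref{e:HMe} with $X = Y = v'_i$ and $(Z, W)$ chosen among pairs of idempotents, using $v'_a * v'_b = \delta_{ab} v'_b$ and the commutativity of $*$. Writing $[v'_i, v'_j] = \sum_l c^l_{ij} v'_l$, the case $(Z, W) = (v'_j, v'_k)$ with $i, j, k$ pairwise distinct forces $c^k_{ij} = 0$ for $k \notin \{i, j\}$, while the case $(Z, W) = (v'_i, v'_j)$ with $i \ne j$ forces $c^i_{ij} = 0$; swapping the roles of $i$ and $j$ gives $c^j_{ij} = 0$ as well. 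Hence $[v'_i, v'_j] = 0$. This is of course the content of Dubrovin's Main Lemma (3.47) in \cite{Du} and the semi-simplicity statement for $F$-manifolds due to Hertling, so in the write-up I would simply cite these after noting that semi-simplicity was already established via \eqref{e:eigen}--\eqref{e:lambda} and Iritani's theorem.

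Once commutativity is in hand, I would invoke the holomorphic Frobenius theorem on an analytic chart provided by the flat coordinates $\bs$ centered at $\sigma_0(q')$: the commuting frame $\{v'_0, \ldots, v'_{R'-1}\}$ integrates to a local coordinate system $(u'^0, \ldots, u'^{R'-1})$ with $v'_i = \partial/\partial u'^i$, uniquely determined up to independent additive constants. These constants are then fixed by the normalization
\[
u'^i(q', \bs = 0) \;=\; \sigma_0^i(q'),
\]
where $\sigma_0^i(q')$ denotes the $i$-th flat coordinate of $\sigma_0(q') \in H(X') \otimes \mathscr{R}'$ in the basis $\{T'_\mu\}$.

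The main point to be careful about is the family nature of the statement: the construction must be uniform in the Novikov parameter $q'$. The analyticity in $q'$ of the idempotents $v'_i$ and of $\sigma_0(q')$, already established in Proposition~\ref{p:5.6}, guarantees that the Frobenius integration can be carried out in an analytic family over the punctured neighborhood of $q^{\ell'} = 0$, and that the normalization is well-defined there. Since semi-simplicity is an open condition, any caustic divisor is automatically avoided by possibly shrinking this punctured neighborhood, and the canonical coordinates $u'^i$ obtained are analytic in both $q'$ and $\bs$.
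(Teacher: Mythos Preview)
Your proposal is correct and aligns with the paper's own treatment: the paper does not give a proof for this lemma but simply cites Dubrovin's Main Lemma (3.47) in the sentence preceding the statement, noting that it is applied in a family in $q'$ with center at $\sigma_0(q')$. Your write-up reproduces this citation and additionally supplies the explicit verification via the Hertling--Manin identity \eqref{e:HMe} and the Frobenius integration argument, which is more detail than the paper itself provides; the extra care about the family dependence on $q'$ and the appeal to analyticity (Remark~\ref{r:toric}, Proposition~\ref{p:5.6}) is appropriate and consistent with the paper's surrounding discussion.
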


Dubrovin's result was extended to $F$-manifolds by Hertling \cite[Theorem 2.11]{Hertling}. In our setting, the $F$-manifold $\M$ is semi-simple (or massive) in the sense that the quantum product on $T_p \M$ for $p \in \M$ is semi-simple. Denote the idempotent vector field be $v_1. \ldots, v_{R'}$. 

\begin{lemma} 
We have $[v_i, v_j] = 0$ for all $0 \le i, j \le R' - 1$. Hence the canonical coordinates $u^0, \ldots, u^{R' - 1}$ on $\M$ exist in the sense that $v_i = \p/\p u^i$. 
\end{lemma}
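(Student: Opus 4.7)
The statement is the $F$-manifold analog of the preceding lemma, originally due to Hertling (\cite[Theorem 2.11]{Hertling}) as an extension of Dubrovin's main lemma to the non-metric setting. The plan is to verify the hypotheses on $\mathcal{M}$ and then extract $[v_i, v_j] = 0$ directly from the Hertling-Manin integrability identity \eqref{e:HMe}. The $F$-manifold structure on $\mathcal{M}$ is furnished by Proposition \ref{p:int-mfd}. Semi-simplicity on a neighborhood of the section $\bs = 0$ is inherited from $QH(X')$ at $\sigma_0(q')$ through the $F$-isomorphism of Theorem \ref{t:small}/Proposition \ref{p:bbgm}, while semi-simplicity of $QH(X')$ itself is Iritani's theorem on toric GW (Remark \ref{r:toric}). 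Openness of the semi-simple locus then supplies a full frame of idempotent vector fields $v_0, \ldots, v_{R'-1}$ of the relative tangent bundle of $\mathcal{M}$.

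To establish commutativity, I would substitute $X = v_i$, $Y = v_j$ with $i \neq j$ into \eqref{e:HMe}. Since $v_i * v_j = 0$, the left-hand side vanishes and the identity collapses to
\begin{equation*}
v_i * (L_{v_j} *)(Z, W) + v_j * (L_{v_i} *)(Z, W) = 0,
\end{equation*}
where $(L_A *)(Z, W) = [A, Z*W] - [A, Z]*W - Z*[A, W]$. Writing $[v_a, v_b] = \sum_m c_{ab}^m v_m$ and taking $Z = W = v_m$ for $m \notin \{i, j\}$, the rule $v_a * v_b = \delta_{ab} v_a$ projects the equation onto $c_{jm}^i v_i + c_{im}^j v_j = 0$, forcing $c_{jm}^i = c_{im}^j = 0$ for all distinct triples. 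Taking instead $Z = W = v_i$ gives $-c_{ji}^i v_i = 0$, so by antisymmetry $c_{ij}^i = 0$; interchanging $i$ and $j$ yields $c_{ij}^j = 0$. All structure constants vanish and hence $[v_i, v_j] = 0$.

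The final step is to apply the analytic Frobenius theorem to the commuting frame $\{v_i\}$ to produce coordinates $u^0, \ldots, u^{R'-1}$ with $v_i = \partial/\partial u^i$. The algebraic computation above is straightforward; the one genuine subtlety is confirming that Frobenius applies in the analytic (rather than merely formal) category over the base $\mathscr{R}'$. This is supplied by Lemma \ref{l:cont} together with the toric local-model hypothesis, which guarantees honest analyticity of $\mathcal{M}$ and of its idempotent decomposition on a punctured neighborhood of $q^{\ell'} = 0$. With that in place, the argument runs in parallel to the preceding Dubrovin lemma, the role of the Frobenius-manifold pairing being played by the coordinate-free WDVV identity throughout.
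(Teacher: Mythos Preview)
Your proposal is correct. The paper itself gives no explicit proof: it simply invokes Hertling's extension of Dubrovin's main lemma to $F$-manifolds \cite[Theorem 2.11]{Hertling}, having already noted that $\mathcal{M}$ is semi-simple. Your write-up follows the same route but unpacks the content of Hertling's theorem by running the Hertling--Manin identity \eqref{e:HMe} directly on the idempotents; the case analysis you give (distinct $i,j,m$; then $Z=W=v_i$; then $Z=W=v_j$) is exactly the standard proof of that theorem, and your verification of the hypotheses (the $F$-structure on $\mathcal{M}$ from Proposition \ref{p:int-mfd}, semi-simplicity via the ring decomposition and Iritani's result, analyticity from the toric local-model assumption and Lemma \ref{l:cont}) is accurate. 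So the only difference is one of explicitness: the paper black-boxes Hertling, while you reproduce the key computation.
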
 

We emphasize that we have constructed an analytic family of coordinate systems $(u^0(q', p), \ldots, u^{R' - 1}(q', p))$ parametrized by $q' \in \mathscr{R}'$. Write 
\begin{equation} \label{e:lc'}
\Bbb T_i(q') = \sum_{j = 0}^{R' - 1} a_i^j(q') \,v_j(q', \bs = 0)
\end{equation}
for an invertible $R' \times R'$ matrix $(a_i^j(q'))$. From Theorem \ref{t:small} (or \eqref{e:start}), we see easily that the same linear combination passes to the $X'$ side:
\begin{lemma}
After a possible reordering, we have
\begin{equation} \label{e:lc}
T'_i = \sum_{j = 0}^{R' - 1} a_i^j(q') \,v'_j(\sigma_0(q')),
\end{equation}
for all $i = 0, \ldots, R' - 1$.
\end{lemma}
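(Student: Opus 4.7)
\smallskip
\noindent\textbf{Proof plan.} The plan is to transport the idempotent decomposition on $\mathscr{T}|_{\bs = 0}$ across to $H(X') \otimes \mathscr{R}'$ through the ring isomorphism supplied by Theorem \ref{t:small}. Restricted to $\hat\bs = 0$ this is precisely equation \eqref{e:start}, which says that the $\mathscr{R}'$-linear map $\Bbb T_i \mapsto T'_i$ intertwines the quantum product on $(\mathscr{T}, *)$ at $\bs = 0$ with the quantum product on $H(X') \otimes \mathscr{R}'$ at $\sigma_0(q')$. By the Frobenius property of both products, this map is an isomorphism of commutative Frobenius $\mathscr{R}'$-algebras.

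Both algebras are generically semi-simple of rank $R'$: on the $X$-side this is Proposition \ref{p:split} restricted to the $\mathscr{T}_0$ factor, while on the $X'$-side it follows from the toric local model together with the generic semi-simplicity of toric quantum cohomology recalled in Remark \ref{r:toric}. Any isomorphism between semi-simple commutative algebras restricts to a bijection of the set of primitive idempotents. Applied to the isomorphism of the previous paragraph, the primitive idempotent fields $v_0(q', 0), \ldots, v_{R' - 1}(q', 0)$ on $\mathscr{T}|_{\bs = 0}$ are carried to a permutation of $v'_0(\sigma_0(q')), \ldots, v'_{R' - 1}(\sigma_0(q'))$. Relabeling the $v'_j$ by the inverse of this permutation is precisely the ``possible reordering'' in the statement, after which the isomorphism sends $v_j(q', 0) \mapsto v'_j(\sigma_0(q'))$. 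Since the isomorphism is $\mathscr{R}'$-linear and also sends $\Bbb T_i \mapsto T'_i$, applying it to \eqref{e:lc'} preserves the coefficients $a_i^j(q')$ and produces the required identity \eqref{e:lc}.

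The only technical point that must be handled with care is the field of definition of the primitive idempotents, which a priori lie over the cyclic extension $\widetilde{\mathscr{R}'}$ rather than $\mathscr{R}'$ itself (cf.~Proposition \ref{p:split}). This is not a genuine obstacle: the transport of primitive idempotents by an algebra isomorphism is manifestly functorial under base change, so carrying out the argument over $\widetilde{\mathscr{R}'}$ and then observing that both $\Bbb T_i$ and $T'_i$ descend to $\mathscr{R}'$ yields the statement as given. I expect no hidden difficulty beyond keeping the labeling of $v_j$ and $v'_j$ consistent; in particular no analysis of the GMT $\sigma_0$ or of the Birkhoff normalization beyond what has already been extracted in Theorem \ref{t:small} is required here.
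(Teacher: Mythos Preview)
Your proposal is correct and follows essentially the same approach as the paper. The paper's own proof is a one-line remark that the identity follows from Theorem~\ref{t:small} (equivalently \eqref{e:start}); you have simply spelled out the mechanism, namely that a ring isomorphism between semi-simple commutative algebras carries primitive idempotents to primitive idempotents up to permutation, so the linear combination \eqref{e:lc'} transports verbatim to \eqref{e:lc}. One minor comment: calling the map an isomorphism of \emph{Frobenius} algebras slightly overstates what Theorem~\ref{t:small} provides (only the product, not the pairing, is matched), but since the argument needs only the ring structure this does not affect correctness.
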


Now we may define the map $\hat \Psi$ by matching the canonical coordinates. Namely, $\hat\Psi(q', \bs) \in \M$ is the unique point on $\M$ so that 
\begin{equation} \label{e:Psi}
u^i(\hat \Psi(q', \bs)) = u'^i(q', \bs) = u'^i(\sigma_0(q') + \bs)
\end{equation}
for $i = 0, \ldots, R' - 1$. Since the tangent map $\hat \Psi_*$ matches the idempotents
$$
\hat \Psi_* \frac{\p}{\p u'^i} = \frac{\p}{\p u^i},
$$
it induces a product structure isomorphism, and hence an $F$-structure isomorphism by \eqref{e:HMe}. Also along $s = 0$, by \eqref{e:lc'} and \eqref{e:lc} we have
$$
\hat\Psi_* T'_i = \Bbb T_i
$$
which matches the initial condition along the $\mathscr{R}'$-axis. 

At the beginning $\hat \Psi$ exists only locally. But since $H(X')$ is contractible, it exists globally by gluing the local maps. This completes the proof of Theorem \ref{t:main}.

%
%\subsection{Uniqueness of $\hat \Psi$}
%Having established the existence of $\hat{\Psi}$, we now turn our attention to the uniqueness.
%It suffices to show that $\hat{\Psi}$ is unique when restricting to the extremal rays.
%To 

\section{Exact determination of the Dubrovin connection}
The main purpose in this section is to observe the extremely nice phenomenon that we are able to modify the basis given by the ``quantized version'' of the basis given in Lemma~\ref{l:1} ``in a canonical manner'' to get the Gromov--Witten system on $X$ directly, without going through the BF/GMT process!

\subsection{Dubrovin connection in the $\Psi$-corrected quantum frame} \label{ss:explicit}

Now we are going to rewrite the higher order PDEs (PF on $X$) in terms of systems of first order PDE's
\begin{equation} \label{e:2.4}
\begin{split}
 (\zp_i - C_i (q_1, q_2))S = 0, \qquad i = 1,2,
 \end{split}
\end{equation}
such that $C_i$'s are \emph{polynomials in $q_1, q_2$} and are \emph{independent of $z$}. Here we think of $S$ as the $R \times R$ fundamental solution matrix. Also we keep the notation $C_i$'s though we eventually will show that they are precisely $A_i$'s.  

\begin{remark}
Before we perform the calculations, it is important to point out that in reducing the order of differentiations via the PF system, variables $q_1$, $q_2$ are created in the middle of a formula. It is clear that $\zp_i\, q_j = q_j\, \zp_i$ if $i \ne j$. However, for a term like $(\zp_i)^j q_i$, after commutation we get
$$
(\zp_i)^j q_i = q_i (\zp_i + z)^j.
$$  
In this way non-trivial $z$-dependence occurs, which is not allowed in the matrices $C_i$'s. The trick to avoid such a situation is to perform commutations only for terms of the form
$$
(\zp_1 - \zp_2)^j q_1 q_2 = q_1 q_2 (\zp_1 - \zp_2)^j.
$$
We will see that this is always possible for our choice of quantum basis.
\end{remark}

\subsubsection{The matrix for $z\p_1$} \label{ss:zp1} We will complete the proof of Lemma \ref{l:K}.

For $e_1 + e_2 \in [0, r' - 1]$, 
\begin{equation*}
\begin{split}
\zp_1v_{(e_1,\, e_2)} &= v_{(e_1+1,\, e_2)}. 
\end{split}
\end{equation*}
And for the boundary case $e_1 + e_2 = r'$,
\begin{equation} \label{e:edge}
\begin{split}
\zp_1v_{(e_1,\, e_2)} &= v_{(e_1 + 1,\, e_2)} -  (-1)^{r' - e_2} K_1.
\end{split}
\end{equation}
We emphasize here that $r > r'$ is essential for \eqref{e:edge} to be valid.

Also, if either $e_1 + e_2 \in [r' + 1, r]$, or $e_1 + e_2 \in [r + 1, r + r']$ and $e_1\leq r$, then we have
\begin{equation*}
\begin{split}
\zp_1v_{(e_1,\, e_2)} &= v_{(e_1+1,\, e_2)} .
\end{split}
\end{equation*}

For the remaining part, we find
\begin{equation*}
\begin{split}
& \quad (\zp_1)^{r-r'+i+j+1}\zp_2(\zp_2-\zp_1)^{r'-j}I \\
&= (\zp_1)^{r-r'+i+j+1}(\zp_2-\zp_1)^{r'-j+1} I+ (\zp_1)^{r-r'+i+j +2}(\zp_2-\zp_1)^{r'-j}I\\
&=  v_{(r-r'+i+j+1,\, r'-j+1)} + v_{(r-r'+i+j+2,\, r'-j)},\\
\end{split}
\end{equation*}
where $1\leq i, j\leq r'-1$ and $i+j \leq r'-1$.  Also,
\begin{equation*}
\begin{split}
& \quad (\zp_1)^{r-r'+i+1}\zp_2(\zp_2-\zp_1)^{r'}I \\
&= (\zp_1)^{r-r'+i+1}(\zp_2-\zp_1)^{r'+1} I + (\zp_1)^{r-r'+i+2}(\zp_2-\zp_1)^{r'}I \\
&= v_{(r-r'+i+2,\, r')} \\
\end{split}
\end{equation*}
and
\begin{equation*}
\begin{split} 
(\zp_1)^{r+1}\zp_2I & =\zp_2\, q_1(\zp_2-\zp_1)^{r'+1}I \\
&= q_1q_2 I,\\
(\zp_1)^{r+1}\zp_2(\zp_2-\zp_1)^iI & = (\zp_2-\zp_1)^i\zp_2\, q_1(\zp_2-\zp_1)^{r'+1}I \\
&= (\zp_2-\zp_1)^i q_1q_2 I \\
&= q_1q_2(\zp_2-\zp_1)^i I \\
&=q_1q_2\, v_{(0,\,i)}.
\end{split}
\end{equation*}
 Hence we get \small
\begin{equation*}
\begin{split}
&\quad \zp_1v_{(r+1,\, 0)}\\
&= (\zp_1)^{r+2} I + (-1)^{r'}(\zp_1)^{r-r'+1}(\zp_2-\zp_1)^{r'+1}I\\
&= (\zp_1)^{r-r'+1}((\zp_1)^{r'+1} + (-1)^{r'}(\zp_2 - \zp_1)^{r'+1}) I\\
&= (-1)^{r'} (\zp_1)^{r-r'+1} \zp_2((\zp_2-\zp_1)^{r'} - \zp_1(\zp_2-\zp_1)^{r'-1} + \cdots + (-1)^{r'} (\zp_1)^{r'})I\\
&= (-1)^{r'}(v_{(r-r'+2,\, r')} - (v_{(r-r'+2,\, r')} + v_{(r-r'+3,\, r'-1)}) + \cdots + (-1)^{r'} q_1q_2\, I)\\
&= (-1)^{r'}((-1)^{r'-1}v_{(r+1,\, 1)} + (-1)^{r'} q_1q_2\, I)\\
&= q_1q_2I - v_{(r+1,\, 1)},
\end{split}
\end{equation*} \normalsize
where the elementary formula $a^n - b^n = (a - b) (a^{n - 1} + a^{n - 2} b + \cdots + b^{n - 1})$ is used in deriving the third equality.

Similarly, for $1\leq i\leq r'-1$, \small
\begin{equation*}
\begin{split}
& \quad \zp_1 v_{(r+1,\, i)}\\
&= (\zp_1)^{r+2}(\zp_2-\zp_1)^i I + (-1)^{r'-i}(\zp_1)^{r-r'+1+i}(\zp_2-\zp_1)^{r'+1}I\\
&= (\zp_1)^{r-r'+1+i}(\zp_2-\zp_1)^i((\zp_1)^{r'+1-i} + (-1)^{r'-i}(\zp_2 - \zp_1)^{r'+1-i})I\\
&= (-1)^{r'}(\zp_1)^{r-r'+1+i}\zp_2(\zp_2-\zp_1)^i((\zp_2-\zp_1)^{r'-i} - \cdots + (-1)^{r'-i}(\zp_1)^{r'-i})I\\
&= (-1)^{r'}(v_{(r-r'+i+2,\, r')} - (v_{(r-r'+i+2,\, r')} + v_{(r-r'+i+3,\, r'-1)}) + \cdots + (-1)^{r'-i}q_1q_2\, v_{(0,\, i)})\\
&= (-1)^{r'}((-1)^{r'-i-1}v_{(r+i+1,\, 1)} + (-1)^{r'-i} q_1q_2\, v_{(0,\, i)})\\
&= (-1)^i(q_1q_2\, v_{(0,\, i)}-v_{(r+i+1,\, 1)}).
\end{split}
\end{equation*} \normalsize
For the last one with $i = r'$, we have
\begin{equation*}
\begin{split}
&\quad \zp_1v_{(r+1,\, r')}\\
&= (\zp_1)^{r+2}(\zp_2-\zp_1)^{r'}I + (\zp_1)^{r+1}(\zp_2-\zp_1)^{r'+1}I\\
&= (\zp_1)^{r+1}(\zp_2-\zp_1)^{r'}(\zp_1+\zp_2-\zp_1)I\\
&= q_1q_2(\zp_2-\zp_1)^{r'}I \\
&=  q_1q_2\, v_{(0,\, r')}.
\end{split}
\end{equation*}

Together with the calculations on the $K$ part in \eqref{q1-appear}, the proof of Lemma \ref{l:K} is thus complete. 

\subsubsection{The matrix for $z\p_2$} \label{ss:zp2}{\ } \smallskip

It suffices to determine the matrix $C_2(q_1,q_2) - C_1(q_1,q_2)$. 

By definition, for $e_1+e_2\leq r'-1$, 
\begin{equation*}
\begin{split}
(\zp_2 - \zp_1) v_{(e_1,\, e_2)} &= v_{(e_1,\, e_2 + 1)}, 
\end{split}
\end{equation*}
and for $e_1 + e_2 = r'$,
\begin{equation} \label{e:211}
\begin{split}
(\zp_2 - \zp_1) v_{(e_1,\, e_2)} &= v_{(e_1,\, e_2 + 1)} - (-1)^{r' - e_2 - 1} K_1.
\end{split}
\end{equation}

Note that 
\begin{equation*} 
\begin{split}
(\zp_2-\zp_1)^{r'+2}I
&= \zp_2(\zp_2-\zp_1)^{r'+1} I- \zp_1(\zp_2-\zp_1)^{r'+1}I \\
&= q_2\,I - K_2.
\end{split}
\end{equation*}
Then for $e_1 + e_2 \in [r' + 1, 2r' + 1]$ and $e_2\leq r' - 1$, we have
\begin{equation*}
\begin{split}
&\quad (\zp_2-\zp_1)v_{(e_1,\, e_2)} \\
&= (\zp_1)^{e_1} (\zp_2 - \zp_1)^{e_2+1} I + (-1)^{r'-e_2} (\zp_1)^{e_1 + e_2 - r'-1} (\zp_2 - \zp_1)^{r'+2} I\\
&= (\zp_1)^{e_1} (\zp_2 - \zp_1)^{e_2+1} I +(-1)^{r'-e_2-1} (\zp_1)^{e_1 + e_2 - r'} (\zp_2 - \zp_1)^{r'+1} I \\
&\qquad \qquad +(-1)^{r'-e_2} q_2(\zp_1)^{e_1 + e_2 - r'-1}I\\
&= v_{(e_1,\, e_2+1)} +(-1)^{r'-e_2} q_2\, v_{(e_1+e_2-r'-1,\, 0)}.
\end{split}
\end{equation*}
And, for $e_1 + e_2 \in [2r' + 2, r' + r]$ and $e_2 \leq r' - 1$ we have
\begin{equation*}
\begin{split}
&\quad (\zp_2-\zp_1)v_{(e_1,\, e_2)} \\
&= (\zp_1)^{e_1} (\zp_2 - \zp_1)^{e_2+1} I + (-1)^{r'-e_2} (\zp_1)^{e_1 + e_2 - r'-1} (\zp_2 - \zp_1)^{r'+2} I\\
&= v_{(e_1,\, e_2+1)} + (-1)^{r'-e_2} q_2(v_{(e_1+e_2-r'-1,\,0)} - (-1)^{r'}K_{e_1+e_2-2r'-1})\\
&= v_{(e_1,\, e_2+1)} + (-1)^{r'-e_2} q_2\, v_{(e_1+e_2-r'-1,\, 0)} + (-1)^{e_2-1}q_2\, K_{e_1+e_2-2r'-1}.
\end{split}
\end{equation*}

For the remaining part, if $e_1 \in [1, r' + 1]$ then
\begin{equation*}
\begin{split}
&\quad (\zp_2-\zp_1)v_{(e_1,\, r')}\\
&= (\zp_1)^{e_1} (\zp_2 - \zp_1)^{r'+1} I + (\zp_1)^{e_1 -1} (\zp_2 - \zp_1)^{r'+2} I \\
&= q_2(\zp_1)^{e_1-1}I \\
&= q_2\, v_{(e_1-1,\, 0)}.
\end{split}
\end{equation*}
And if $e_1 \in [r' + 2, r + 1]$ then 
\begin{equation*}
\begin{split}
(\zp_2-\zp_1)v_{(e_1,r')}
&= q_2 (\zp_1)^{e_1-1}I \\
&= q_2\, v_{(e_1-1,\, 0)} - (-1)^{r'} q_2\, K_{e_1-1-r'}.
\end{split}
\end{equation*}

Finally, for $i \in [0, r - r' - 2]$,
\begin{equation*} \label{q2-app}
\begin{split}
(\zp_2-\zp_1)K_{i+1} &= (\zp_1)^{i}(\zp_2-\zp_1)^{r'+2}I \\
&= q_2 (\zp_1)^i I - (\zp_1)^{i+1}(\zp_2-\zp_1)^{r'+1}I\\
&= q_2\, v_{(i,0)} - \delta_{(i,0)}(-1)^{r'}q_2\, K_{i-r'} - K_{i+2},
\end{split}
\end{equation*}
and 
\begin{equation} \label{q1-app2}
\begin{split}
(\zp_2-\zp_1)K_{r-r'} &= q_2 (\zp_1)^{r-r'-1} I - (\zp_1)^{r-r'}(\zp_2-\zp_1)^{r'+1}I\\
&= q_2\, v_{(r-r'-1,\, 0)} - \delta_{(r-r'-1,\, 0)}(-1)^{r'} q_2\, K_{r-2r'-1} \\
&\qquad \qquad + (-1)^{r'}q_1\, K_1 - (-1)^{r'}v_{(r+1,\, 0)}.
\end{split}
\end{equation}

The above calculations determine the matrix $C_2(q_1, q_2) - C_1(q_1, q_2)$, and hence $C_2(q_1. q_2)$, completely. Notice that the only appearance of the monomial $q_1$ is in \eqref{q1-app2}, with all the other entries being other $0$, $1$, $q_2$ or $q_1 q_2$ up to sign. In particular, by combining with \eqref{K-block} we conclude the proof of Lemma \ref{l:C2}.

The first calculation in \S \ref{ss:zp1} (resp.~\S \ref{ss:zp2}) shows that the first column of $C_1$ (resp.~$C_2$) has the same behavior as $h\cup$ (resp.~$\xi\cup$). Thus no mirror transform is needed and we conclude 

\begin{theorem} \label{t:main1}
For the local model of simple $(r, r')$ flip $f: X \dasharrow X'$, under the $\Tp$-corrected frame, the Drobrovin connection for $QH(X)$ in the small parameter space are determined by $A_k = C_k$ given in Lemma \ref{l:K} and Lemma \ref{l:C2}.
\end{theorem}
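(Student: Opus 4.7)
My plan is to assemble the explicit Picard--Fuchs reductions of \S\ref{ss:zp1} and \S\ref{ss:zp2} into a first-order system and then invoke the uniqueness of the Dubrovin connection matrices among constant-reducing frames with $z$-free connection.

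\textbf{Step 1 (polynomiality and $z$-independence).} First I would carry out, case by case, the reductions of $\zp_1 v_\mathbf{e}$, $\zp_1 \kappa_j$, $(\zp_2 - \zp_1) v_\mathbf{e}$, $(\zp_2 - \zp_1) \kappa_j$ using $\Box_\ell$ and $\Box_\gamma$. The algebraic reason for the absence of $z$-dependence is that $\zp_1$ and $\zp_2$ commute as operators, so for any polynomial in $\zp_1$ and $\zp_2 - \zp_1$ acting on $I$ one may use the identity
\[
a^{n+1} + (-1)^n b^{n+1} = (a+b)\sum_{i=0}^{n}(-1)^i a^{n-i} b^i
\]
with $a = \zp_1$, $b = \zp_2 - \zp_1$ to expose a $\zp_2$-factor; this then reduces $(\zp_2 - \zp_1)^{r'+1} I$ to $q_2 I$ via $\Box_\gamma$ and $(\zp_1)^{r+1} I$ to $q_1 (\zp_2 - \zp_1)^{r'+1} I$ via $\Box_\ell$. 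The reduction is arranged so that $q_1$ and $q_2$ appear only after all $\zp_i$'s have acted on $I$, so no commutator $[\zp_i, q_j] = z\delta_{ij} q_i$ is ever invoked. Consequently $C_1, C_2$ are polynomial in $q_1, q_2$ with no $z$-dependence.

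\textbf{Step 2 (classical initial condition).} Next, by Definition \ref{d:q-frame} the frame $\{v_\mathbf{e}, \kappa_j\}$ reduces modulo $NE(X)$ to the classical basis $\{T_\mathbf{e}, k_j\}$ of Lemma \ref{l:1}. The direct computation of $\zp_1 v_{(0,0)}$ and $(\zp_2-\zp_1) v_{(0,0)}$ (the first columns of $C_1$ and $C_2 - C_1$) yields $v_{(1,0)}$ and $v_{(0,1)}$, which at $q=0$ are $h$ and $\xi - h$ respectively. More generally, the formulas assembled in Step 1 show $C_k(0)$ is the matrix of classical cup product by the corresponding divisor in the basis of Lemma \ref{l:1}.

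\textbf{Step 3 (uniqueness of the $z$-free frame).} By the Remark following Lemma \ref{l:weight}, the $z$-degree of each $q^\beta$-coefficient of $I$ is $\le -3$, so $I = J_{small}$ and no mirror transformation is needed. The $\mathscr{D}^z$-module generated by $I$ is therefore identified with the one determined by the Dubrovin QDE in the constant frame $\{T_\mu\}$, whose connection matrices $A_k$ are $z$-free. Both the $\Tp$-corrected frame and the constant frame reduce to $\{T_\mu\}$ modulo $NE(X)$, and both produce $z$-free connection matrices (by Step 1 for $C_k$, by definition for $A_k$). The uniqueness recorded in \S\ref{ss:str-QH}---that among all frames $\{\tilde T_\mu\}$ with $\tilde T_\mu \equiv T_\mu \pmod{\mathscr{R}}$, the constant frame is the unique one for which $\tilde A_k$ is $z$-free---then forces the frame-change matrix to be the identity, yielding $C_k = A_k$.

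\textbf{Main obstacle.} The hard part is Step 1 at the transition between the $\Tp H(X')$ block and the kernel block $K$, corresponding to boundary cases $e_1 + e_2 \in \{r', r\}$ (cf.~\eqref{e:edge} and \eqref{q1-app2}). The $\Tp$-correction $(-1)^{r'-e_2} k_{e_1+e_2-(r'+1)}$ in the definition \eqref{e:3} of $v_\mathbf{e}$ is designed precisely to absorb the kernel contribution that would otherwise spoil the clean block structure of Lemmas \ref{l:K} and \ref{l:C2}. Verifying in each boundary case that this correction yields the sparse off-diagonal structure claimed---in particular that $C_1^{21}$ has constant terms only in its first row and that $C_2^{21}$ is divisible by $q_2$---is the computational heart of the argument, but it follows algorithmically from the polynomial identity above together with the repeated application of $\Box_\ell$ and $\Box_\gamma$.
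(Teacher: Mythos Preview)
Your proposal is correct and follows the paper's approach: the explicit reductions (\S\ref{ss:zp1}--\S\ref{ss:zp2}) yield $z$-free $C_k$, and the first-column check against classical cup product is exactly the paper's ``no mirror transform needed,'' which is the concrete instance of the uniqueness principle you invoke in Step~3. One sharpening of Step~1: commutators \emph{are} invoked in the reduction (e.g., moving $(\zp_2 - \zp_1)^i$ past $q_1 q_2$ after both box relations have been applied), but only of the vanishing type $[(\zp_2 - \zp_1),\, q_1 q_2] = 0$, so no $z$ appears---this is the content of the Remark at the start of \S\ref{ss:explicit}, and is slightly more precise than saying $q_1,q_2$ appear only after all derivatives have acted.
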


\subsection{The degenerate cases: simple flops and simple blow-ups.}

\subsubsection{The simple flops: $r = r'$.}

The case of flops can be considered as a degenerate case of flips. Most of the discussions for flips will be valid except that some boundary cases need to be taken care carefully.

First of all, $\T$ induces a group isomorphism $H(X) \cong H(X')$ with inverse $\T^{-1} = \Tp$, and with Poincar\'e pairing preserved. In particular $K = \ker \T = 0$. Nevertheless, the $\Tp$-corrected frame $v_\mathbf{e}$ in \eqref{e:3} is still well defined with $r' = r$ understood. In particular, using $\Box_\ell$ in \eqref{e:2} we find
\begin{equation} \label{e:f}
\begin{split}
v_{(r + 1, 0)} &= (z\p_1)^{r + 1} I + (-1)^r (z\p_2 - z\p_1)^{r + 1} I\\
&= (1 + (-1)^r q_1^{-1}) (z\p_1)^{r + 1} I\\
&= (-1)^r \f (q_1)^{-1} K_1
\end{split}
\end{equation}
where the fundamental rational function
$$
\f (q) = \frac{q}{1 - (-1)^{r  + 1} q}
$$ 
appears naturally and $K_1 = (z\p_1)^{r' + 1} I$ is defined in \eqref{k:1}. The point is that, when $r > r'$ the kernel of $\T$ exists nontrivially and $K_1$ is independent of $v_{(r + 1,\,0)}$. The relation \eqref{e:f} exists only in the case of flops!

All the calculations done in \S \ref{ss:zp1} about $z\p_1 v_\mathbf{e}$ are valid except that the term $K_1$ in \eqref{e:edge} needs to be further substituted by \eqref{e:f}. Namely for the boundary case $e_1 + e_2 = r$, \eqref{e:edge} becomes
\begin{equation} \label{e:edge1}
\begin{split}
\zp_1v_{(e_1,\, e_2)} &= v_{(e_1 + 1,\, e_2)} -  (-1)^{e_2} \f (q_1)\, v_{(r + 1,\, 0)}.
\end{split}
\end{equation}

All the calculations done in \S \ref{ss:zp2} about $(z\p_2 - z\p_1) v_\mathbf{e}$ are also valid as long as we notice that, under the assumption $r = r'$, all the cases with appearance of $K_j$ actually does not exist except for the $K_1$ in \eqref{e:211}. Since $K_1$ is just treated by \eqref{e:f}, we thus conclude that 

\begin{theorem} \label{t:flops}
For the projective local model of simple $P^r$ flops $f: X \dasharrow X'$, the $\Tp$-corrected frame $\{v_\mathbf{e}\}$ on $X$ leads to the connection matrices $C_1(q_1, q_2, \f)$ and $C_2(q_1, q_2, \f)$ such that they are independent of $z$. 

Indeed they are precisely the Dubrovin connection matrices over the small parameters: $C_1 = A_1$ and $C_2 = A_2$. Moreover, all the monomials in $q_1$ and $q_2$ are either $1$, $q_2$ or $q_1 q_2$ up to sign. 
\end{theorem}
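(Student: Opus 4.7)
The plan is to adapt the flip computations from \S\ref{ss:zp1} and \S\ref{ss:zp2} to the degenerate case $r = r'$, where the kernel $K$ collapses to zero but the formal structure of the $\Psi$-corrected frame \eqref{e:3} still makes sense. The central observation is the identity \eqref{e:f}, which, in the flop case, eliminates $K_1 = (z\partial_1)^{r'+1} I$ as an independent element and instead couples it to $v_{(r+1,\,0)}$ via the fundamental rational function $\mathbf{f}(q_1) = q_1/(1 - (-1)^{r+1} q_1)$. So rather than treating $K_1$ as a new basis vector, I would substitute $K_1 = (-1)^r \mathbf{f}(q_1) v_{(r+1,\,0)}$ everywhere it appears.

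First I would revisit each calculation of $z\partial_1 v_{\mathbf{e}}$ done for flips. The only modification occurs at the boundary case $e_1 + e_2 = r$ from \eqref{e:edge}, where the term $K_1$ gets replaced using \eqref{e:f} to yield \eqref{e:edge1}; all interior cases and the cases $e_1+e_2 \in [r'+1, r]$ or higher which relied on the Picard--Fuchs relation carry over verbatim (with $r'=r$), since the additional kernel vectors $K_i$ for $i \geq 2$ simply never arise when $r=r'$. Next I would similarly revisit the computations of $(z\partial_2 - z\partial_1) v_{\mathbf{e}}$ in \S\ref{ss:zp2}, noting that in the flip computation the higher $K_i$'s appeared only for $i \leq r - r' = 0$ in the flop case, so all those terms disappear; the only genuine $K_1$ contribution is the boundary term in \eqref{e:211}, which is handled by the same substitution via \eqref{e:f}. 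After these substitutions, all entries become polynomials in $q_1, q_2$ and $\mathbf{f}(q_1)$, with every monomial in $(q_1, q_2)$ alone of the form $1$, $q_2$, or $q_1 q_2$ up to sign, confirming the last assertion.

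For the $z$-independence, I would note that $z$ can only creep into the connection matrices through commutations of $z\partial_i$ past a variable $q_j$ it created. By the remark preceding \S\ref{ss:zp1}, this is harmless whenever the factor that appears is of the form $q_1 q_2$ moved past $(z\partial_1 - z\partial_2)$, which commute. A direct audit of the substitutions above shows that every newly produced $q$-factor is either $q_1 q_2$ (moved past $(z\partial_2 - z\partial_1)$) or $q_2$ alone in an inert position, exactly as in the flip case; the substitution from \eqref{e:f} introduces $\mathbf{f}(q_1)$ as a scalar coefficient with no differential action, so no $z$ is generated. Finally, to identify $C_i = A_i$ I would invoke that the first column of $C_1$ (resp.\ $C_2$), read off the definition of $v_{\mathbf{e}}$ at $q_1=q_2=0$, reproduces the classical product by $h$ (resp.\ $\xi$), which after the Birkhoff-type argument in \S\ref{ss:abs} (together with the $z$-freeness just established) forces the matching with the Dubrovin connection without any mirror transform.

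The main obstacle I anticipate is the careful bookkeeping in \S\ref{ss:zp2}: several sub-cases there produce apparent $K_j$ terms via expansions of $(z\partial_2 - z\partial_1)^{r'+2} I$, and one must verify that in the collapsed case $r=r'$ these either vanish identically or reduce (after telescoping) to a scalar multiple of $K_1$ which can then be absorbed through \eqref{e:f}. I also expect that preserving the clean monomial list $\{1, q_2, q_1 q_2\}$ will require some cancellation between contributions coming from the original $v_{(r+1,\,0)}$ entry and the substituted $\mathbf{f}(q_1)$ factor, so a uniform rewriting of the boundary formulas is the step most likely to hide subtleties.
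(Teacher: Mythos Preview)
Your proposal is correct and follows essentially the same approach as the paper's own proof: reuse the flip computations of \S\ref{ss:zp1}--\S\ref{ss:zp2} verbatim under $r=r'$, replace every appearance of $K_1$ (specifically in \eqref{e:edge} and \eqref{e:211}) via the relation \eqref{e:f}, observe that all higher $K_j$ cases are vacuous, and conclude $C_i=A_i$ from the first-column check. The bookkeeping obstacles you anticipate in \S\ref{ss:zp2} do not materialize---the paper simply notes that under $r=r'$ every case producing a $K_j$ with $j\ge 2$ is empty, so no telescoping or cancellation is required beyond the single substitution for $K_1$.
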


The reason that no (generalized) mirror transformation is needed comes from the simple fact that the first columns of $C_1$ and $C_2$ has the correct form as the classical product. Since $X$ and $X'$ have isomorphic Picard--Fuchs ideal for small $I = J$ function, and the above construction of $A_1$, $A_2$ depends only on the Picard--Fuchs ideal, we get the analytic continuation of Dubrovin connection, along the small parameters, under simple $P^r$ flops.    

\begin{example} [Atiyah flop]
For $r = 1$, we get the $\Tp$-corrected frame \small
\begin{equation*}
\begin{split}
v_1 &= I, \quad v_2 = \hat h I, \quad v_3 = (\hat\xi - \hat h) I, \\
v_4 &= \hat h^2 I - (\hat \xi - \hat h)^2 I, \quad v_5 = \hat h (\hat \xi - \hat h) I + (\hat \xi - \hat h)^2 I, \\
v_6 &= \hat h^2 (\hat \xi - \hat h) I + \hat h (\hat\xi - \hat h)^2 I.
\end{split}
\end{equation*} \normalsize
The relation between $v_4$ and $K_1 = (\zp_2 - \zp_1)^2 I$ is given by \small
$$
v_4 = -\f^{-1} (\zp_1)^2 I = - q_1 \f^{-1} (\zp_2 - \zp_1)^2 I,
$$ \normalsize
where $\f = \f(q_1)$. Then the connection matrices for $\zp_1$ and $\zp_2$ are  \small
\begin{equation*}
\begin{split}
A_1 = 
\begin{bmatrix} &&& q_1 q_2 \\ 1 \\ &&&&& q_1 q_2 \\ & -\f & q_1^{-1} \f \\ && 1 \\&&& -1 & 1 
\end{bmatrix}, \quad 
A_2 = 
\begin{bmatrix} &&& q_2(1 -q_1) & q_2 \\ 1 &&&&& q_2 \\ 1 &&&&& q_1 q_2\\ & 1 \\ & 1 & 1\\ &&&& 1 
\end{bmatrix}.
\end{split}
\end{equation*} \normalsize
Notice that $v_6 = \hat h \hat \xi (\hat \xi - \hat h) I = \hat h \hat \xi^2 I - q_1 q_2 I$ does not come from a naive quantization. The $z$-independence of $C_k$'s fails if $v_6$ is not $\Tp$-corrected.
\end{example}

\subsubsection{Simple blow-ups: $r' = 0$.} \label{ss:blowup} 

For the other extreme of degenerate cases $r' = 0$, we actually get a blow-up $f: X = {\rm Bl}_p X' \to X'$ at a point $p \in X$. The structure of simple blow-ups is particularly simple. Notice that now $\xi - h = [Z] \cong P^r$ is the $f$-exceptional divisor.

Our discussion of $\Tp$-corrected frame on flips is valid by simply setting $r' = e_2 = 0$ in Definition \ref{d:q-frame}. Namely $K = \ker \T$ is spanned by
\begin{equation} \label{e:kb}
K_{i+1} := \hat \kappa_i I = \hat h^i(\hat \xi - \hat h) I, \qquad i \in [0, r - 1].
\end{equation} 
And the basis elements corresponding to $\Tp H(X')$ are
\begin{equation} \label{e:kp}
\begin{split}
v_e &:= \hat h^e  I  + (1 - {\delta}_{e, 0}) \,\hat h^{e - 1} (\hat \xi - \hat h) I, \qquad e \in [0, r + 1], \\
&=\begin{cases} \hat h^0 I = I, & e = 0; \\ \hat h^{e - 1} \hat \xi \, I, & e \in [1, r + 1]. \end{cases} 
\end{split}
\end{equation}

\begin{example} [Hirzebruch surface $F_1$] \label{E:F1}
For $(r, r') = (1, 0)$, we get the Hirzebruch surface $X = F_1 = P_{P^1}(\mathscr{O}(-1) \oplus \mathscr{O})$ as the blow-up of $X' = P^2$ at a point $p \in X'$. The $\Tp$-corrected frame is
\begin{equation*}
v_0 = I, \quad v_1 = \hat \xi \, I, \quad v_2 = \hat h \hat \xi \, I, \quad K_1 = (\hat \xi - \hat h)I
\end{equation*}
with Picard--Fuchs operators $\Box_\ell = \hat h^2 - q_1 (\hat \xi - \hat h)$, $\Box_\gamma = \hat \xi(\hat \xi - \hat h) - q_2$. 

The connection matrices for $z\p_1$ and $z\p_2$ are then obtained easily:
\begin{equation} \label{e:qF1}
A_1 = \left[\begin{array} {ccc|c} && q_1 q_2 \\ 1 &&& \\ & 1&& 1 \\ \hline -1 &&& -q_1\end{array}\right], \qquad
A_2 = \left[\begin{array} {ccc|c} & q_2 & q_1 q_2 & q_2 \\ 1 && q_2 \\ & 1 && \\ \hline && -q_2 \end{array} \right].
\end{equation}
\end{example}

\section{An example on simple (2, 1) flip for fourfolds} \label{s:(2,1)}

\subsection{Dubrovin connection on $H(X)$} \label{s:ex-(2,1)}

Let
$$
\hat t = t^0 T_0 + t^1 h + t^2 \xi \in H^0(X) \oplus H^2(X)
$$
be the small parameters, $q_1 = q^\ell e^{t^1}$, and $q_2 = q^\gamma e^{t^2}$. 

With respect to the basis given in Definition \ref{d:q-frame}, namely \small
\begin{equation} \label{basis-21}
\begin{split}
v_1 &= I, \\
v_2 &= \hat h I, \quad v_3 = (\hat \xi - \hat h) I, \\
v_4 &= \hat h^2 I - (\hat \xi - \hat h)^2 I, \quad v_5 = \hat h(\hat \xi - \hat h) I + (\hat \xi - \hat h)^2 I, \\
v_6 &= \hat h^3 I - \hat h(\hat \xi - \hat h)^2 I,\quad v_7 = \hat h^2(\hat \xi - \hat h) I + \hat h(\hat \xi - \hat h)^2 I, \\
v_8 &= \hat h^3 (\hat \xi - \hat h) I + \hat h^2(\hat \xi - \hat h)^2 I, \\
v_9 &= K_1 = (\hat \xi - \hat h)^2 I,
\end{split}
\end{equation} \normalsize
we get the Dubrovin connection matrices $A_k^v$'s on $H(X)$: 

\small
\begin{equation} \label{A-21}
A_1^v(\hat t) = h*_{t = \hat t} = 
\begin{bmatrix}
&&&&& q_1 q_2 \\
1 \\
&&&&&&& q_1 q_2 \\
& 1 \\
&& 1\\
&&& 1 &&&&& -1 \\
&&&& 1 \\
&&&&& -1 & 1 \\
& 1 & -1 &&&&&& q_1
\end{bmatrix}, 
\end{equation}
\begin{equation}
A_2^v(\hat t) = \xi *_{t = \hat t} = 
\begin{bmatrix}
&&& -q_2 & q_2 & q_1 q_2 &&& q_2\\
1 &&&&& -q_2 & q_2\\
1 &&&&&&& q_1 q_2 \\
& 1 &&&&&& q_2\\
& 1 & 1\\
&&& 1 &&&&& \\
&&& 1 & 1 \\
&&&&&& 1 \\
&&&&&&& q_2 &
\end{bmatrix}. 
\end{equation}
\normalsize
Notice that $\ker \T$ is spanned by the one dimensional direction
$$
K_1 = (\zp_2 - \zp_1)^2 I.
$$ 
It is precisely the location where $A_1$ has a pole at $q_1 = \infty$, i.e.~$q_1' = 0$.

Next we study the analytic structure of the Dubrovin connection along the infinity divisor $q_1 = \infty$ on the Hopf--M\"obius stripe $\M$ and its relation to the Dubrovin connection on $H(X')$. For this purpose, we use coordinates 
$$
x := q_1' = {1}/{q_1}, \qquad y := q_2' = q_1 q_2.
$$
The chain rule implies that
$$
y\,\p_y = xy\, \p_{q_2},
$$ 
and 
$$
x\,\p_x = x(-x^{-2}\, \p_{q_1} + y\,\p_{q_2}) = \p_{\xi - h}. 
$$

Further simplifications are possible by choosing the basis to be orthogonal. Although this simplification is not strictly necessary, it will however make the structure of the connection more transparent.

Indeed, let $w_i = \sum_{j} v_j T_{ji}$ with
\begin{equation*}
T = 
\left[\begin{array} {cccccccc|c}
1 &&&&&&&&\\
& 1 &&&&&&&\\
& \frac{1}{2} & 1 &&&&&&\\
&&& 1 &&&&&\\
&&& \frac{1}{2} & 1 &&&&\\
&&&&& 1 &&&\\
&&&&& \frac{1}{2} & 1 &&\\
&&&&&&& 1 &\\ \hline
&&&&&&&& 1
\end{array} \right],
\end{equation*}
then we have the underlying topological Poincar\'e pairing on $H(X)$:
\begin{equation} \label{pPp}
g_{ij} := (w_i, w_i)^X = \delta_{9, i + j}, \qquad 1 \le i, j \le 8,
\end{equation}
and $w_9 = v_9 = K_1$ satisfies $(w_9, w_i)^X = \delta_{9, i}$. 

Under this frame $w_i$'s, we compute the QDE (here $A_k = A_k^w$):\small
\begin{equation} \label{QDE-sym}
z(x\,\p_x) S = A_1 S = 
\left[ \begin{array} {cccccccc|c}
&&& -\frac{1}{2} xy & xy &&&& xy \\
&&&&& -\frac{1}{2} xy & xy &&\\
1 &&&&& \frac{1}{4} xy & -\frac{1}{2} xy &&\\
&&&&&&& xy \\
& 1 &&&&&& -\frac{1}{2} xy\\
&&&&&&&& 1 \\
&&& 1 &&&&& -\frac{1}{2} \\
&&&&& 1 &&&\\ \hline
& -\frac{1}{2} & 1 &&&&& xy & -1/x
\end{array} \right] S, 
\end{equation}
\begin{equation} \label{QDE-2}
z(y\,\p_y) S = A_2 S = 
\left[ \begin{array} {cccccccc|c}
&&& -\frac{1}{2} xy & xy & y &&& xy \\
1 &&&&&-\frac{1}{2} xy & xy & \\
\frac{1}{2} &&&&& \frac{1}{4} xy & -\frac{1}{2} xy & y \\
& 1 &&&&&& xy \\
& 1 & 1 &&&&& -\frac{1}{2} xy \\
&&& 1 &&&&&\\
&&& 1 & 1 &&&&\\
&&&&& \frac{1}{2} & 1 &&\\ \hline
&&&&&&& xy
\end{array} \right] S.
\end{equation}
\normalsize
The symmetry pattern respects the Poincar\'e pairing \eqref{pPp} due to the Frobenius property $(T_i*T_j, T_k) = (T_j, T_i* T_k)$: for both matrices, the first $8 \times 8$ block is symmetric under $(i, j) \mapsto (9 - j, 9 - i)$. For the remaining $9$-th column and row, it is symmetric under $(9, j) \mapsto (9 - j, 9)$.

The symmetry patterns can be unified to $(i, j) \mapsto (10 - j, 10 - i)$ if we insert the basis element $K_1$ in its original natural ordering $w_5$ instead of $w_9$. The reason for moving it to the end of the matrix is merely for the convenience of presentation on the block decomposition of the Dubrovin connection we shall perform. 

Notice that the system is irregular along $x = 0$ with Poincar\'e rank one. Let $S = P Z$ be the formal gauge transformation leading to the block decomposition with respect to the basis $w_i$'s:
\begin{equation*}
\begin{split}
z (x\,\p_x) Z &= E_1\, Z, \\
z (y \,\p_y) Z &= E_2\, Z. 
\end{split}
\end{equation*}
Then a recursive algorithm with respect to the power of $x$ determines $P$ and $E_1$ as matrices in $\Bbb Q[y, z][\![x]\!]$. And then the block decomposition on $E_2$ follows automatically since the connection is integrable (flat).

The matrix $P$ has the form
$$
P(x, y, z) = 
\begin{bmatrix}
1 &&& g_1 \\
& \ddots && \vdots \\
&& 1 & g_8 \\
f_1 & \cdots & f_8 & 1
\end{bmatrix},
$$
where $P^{11} = I_8$, $P^{22} = I_1$, and we have the new (non-constant) frame
\begin{equation} \label{frame}
\begin{split}
\tilde w_i &= w_i + f_i K_1, \quad 1 \le i \le 8, \\
\tilde K_1 &= K_1 + \sum_{i = 1}^8 g_i w_i.
\end{split}
\end{equation} 

\begin{corollary} \label{c:a-sym}
For $1 \le i \le 8$,
\begin{equation} \label{ON-sym}
f_i (x, y, z) = - g_{9 - i}(x, y, -z).
\end{equation}
\end{corollary}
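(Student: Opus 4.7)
The plan is to derive the identity from the orthogonality of $\mathscr{T}$ and $\mathscr{K}$ with respect to the polarized pairing, a structural fact already established in Proposition \ref{p:5.3} as a consequence of the metric-compatibility of $\nabla^z$ (Lemma \ref{metrical}). Since $\{\tilde w_1, \ldots, \tilde w_8\}$ frames $\mathscr{T}$ and $\tilde K_1$ frames $\mathscr{K}$, we must have
$$
\Lb \tilde w_i, \tilde K_1 \Rb = 0, \qquad 1 \le i \le 8.
$$

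The next step is to expand each such vanishing using the explicit form \eqref{frame} of the block-diagonalizing frame together with $\Lb a(z), b(z) \Rb := (a(z), b(-z))$. Writing
$$
\Lb \tilde w_i(z), \tilde K_1(z) \Rb = \bigl( w_i + f_i(x, y, z)\, K_1,\ K_1 + \sum_{j = 1}^{8} g_j(x, y, -z)\, w_j \bigr),
$$
and invoking the Poincar\'e pairing \eqref{pPp}, which is anti-diagonal on $\mathrm{span}\{w_1, \ldots, w_8\}$ with $(w_i, w_j) = \delta_{i + j, 9}$, while $K_1 = w_9$ satisfies $(K_1, K_1) = 1$ and $(K_1, w_j) = 0$ for $j \le 8$, the four cross terms collapse to a single line:
$$
\Lb \tilde w_i, \tilde K_1 \Rb = f_i(x, y, z) + g_{9 - i}(x, y, -z).
$$
Setting this to zero yields \eqref{ON-sym}.

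The only genuine input is the orthogonality $\mathscr{T} \perp \mathscr{K}$ under the polarized pairing, which is built into the block-diagonalizing frame by Proposition \ref{p:5.3} and reflected in the fact that the chosen $P$ has the special form $P^{11} = I_8$, $P^{22} = I_1$, with all corrections confined to the off-diagonal blocks $f_i, g_j$. I do not foresee any real obstacle: the anti-diagonal form \eqref{pPp} of the Poincar\'e pairing on $\{w_i\}$ is precisely what produces the index reflection $i \mapsto 9 - i$ automatically, and the sign change $z \mapsto -z$ attached to $g_{9 - i}$ comes directly from the bar operation in the polarized pairing.
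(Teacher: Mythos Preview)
Your proof is correct and takes essentially the same approach as the paper: both compute the polarized pairing $\Lb \tilde w_i, \tilde K_1 \Rb = f_i(z) + g_{9-i}(-z)$ using the anti-diagonal form \eqref{pPp} and then argue it vanishes. The only difference is that you invoke the general orthogonality $\mathscr{T} \perp \mathscr{K}$ from Proposition~\ref{p:5.3}, whereas the paper reproves this vanishing locally via the ODE argument (metric compatibility from Lemma~\ref{metrical}, block-diagonality of $\nabla^z$, and the vanishing initial condition at $x = y = 0$); these are the same mechanism, so your citation is the cleaner route.
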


\begin{proof} 
Notice that $\Lb \tilde w_i, \tilde K_1 \Rb = g_{9 - i}(-z) + f_i(z)$. Also the block decomposition leads to decomposition of bundles and connections. By Lemma \ref{metrical}, 
$$
\p_j \Lb \tilde w_i, \tilde K_1 \Rb = \Lb \nabla^z_j \tilde w_i, \tilde K_1\Rb + \Lb \tilde w_i, \nabla^z_j \tilde K_1 \Rb = 0 = \Lb \tilde w_i, \tilde K_1\Rb
$$ 
at $x = 0 = y$, $j = 1, 2$. This implies $\Lb \tilde w_i, \tilde K_1 \Rb = 0$ as power series in $x, y$.
\end{proof}

The polarized Hermitian pairing under the frame $\tilde w_i$'s takes the form
$$
\tilde g_{i\bar j} = \Lb \tilde w_i, \tilde w_j \Rb = \delta_{ij} + f_i \bar g_j.
$$ 
Hence the dual frame $\tilde w^i = \sum \tilde g^{i\bar j} \overline{\tilde w_j}$ can be determined in explicit terms. For ease of notations, we denote by $i' = 9 - i$ for $i \in [1, 8]$. Hence $w^i = w_{i'}$ and
$$
f_i = -\bar g_{i'}.
$$

\begin{lemma} \label{l:dual}
We have $D := \det P = 1 - \sum_{i = 1}^8 f_i g_{i} = \Lb \tilde K_1, \tilde K_1 \Rb$, and the dual frame of $\tilde w_i$'s and $\tilde K_1$ are 
\begin{equation*}
\tilde K^1 = \frac{\overline{\tilde K_1}}{D}, \qquad \tilde w^i = w^i + \frac{f_{i'}}{D} \tilde K_1 = w_{i'} + f_{i'} \overline{\tilde K^1}. 
\end{equation*}
\end{lemma}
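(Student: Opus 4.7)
The proof is essentially an exercise in linear algebra: each assertion will follow from a direct computation in the $9\times 9$ Gram matrix of the $w_i$'s, combined with the symmetry relation from Corollary~\ref{c:a-sym}. I will verify the three identities in the order stated.

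For the determinant, the matrix $P$ from \eqref{frame} has the block form $\bigl[\begin{smallmatrix}I_8 & g\\ f^{T} & 1\end{smallmatrix}\bigr]$ with column $g=(g_i)_{i=1}^8$ and row $f^{T}=(f_i)_{i=1}^8$, so a single Schur-complement computation gives $\det P = 1 - f^{T}g = 1-\sum_{i=1}^8 f_ig_i$. To identify this with $\Lb\tilde K_1,\tilde K_1\Rb$, I would expand $\tilde K_1 = K_1 + \sum_j g_j w_j$ and $\overline{\tilde K_1} = K_1 + \sum_j \bar g_j w_j$ and evaluate using the orthogonality relations $(w_i,w_j)=\delta_{i+j,9}$, $(K_1,K_1)=1$, $(K_1,w_i)=0$ from \eqref{pPp}. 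The mixed $K_1$-and-$w_i$ terms vanish, leaving $1 + \sum_j g_j\bar g_{j'}$; Corollary~\ref{c:a-sym} in the form $\bar g_{j'}=-f_j$ converts this into $1-\sum_j f_jg_j=D$.

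For the dual frame, the block decomposition together with Lemma~\ref{metrical} forces the polarized pairing $\Lb\,\cdot\,,\,\cdot\,\Rb$ to split orthogonally along $\mathscr{T}\oplus\mathscr{K}$; this orthogonality is precisely what Corollary~\ref{c:a-sym} records. On the one-dimensional $\mathscr{K}$-part the pairing acts as multiplication by $D$, so the natural dual of $\tilde K_1$ is $\overline{\tilde K_1}/D$, and applying Corollary~\ref{c:a-sym} twice gives $\bar D=D$ so the formula is unambiguous. For $\tilde w^i$, I would test the ansatz $w^i+(f_{i'}/D)\tilde K_1$ against $\tilde w_j$ and $\tilde K_1$ in $\Lb\,\cdot\,,\,\cdot\,\Rb$: the leading piece produces $(w_{i'},\,w_j+f_jK_1)=\delta_{ij}$ by \eqref{pPp}, the $\mathscr{T}$-$\mathscr{K}$ cross terms $\Lb \tilde K_1,\tilde w_j\Rb$ vanish by the orthogonal splitting, and the lone remaining contribution is cancelled by the $f_{i'}/D$ correction once $f_j+\bar g_{j'}=0$ is inserted. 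The alternative form $\tilde w^i=w_{i'}+f_{i'}\overline{\tilde K^1}$ then follows at once from $\overline{\tilde K^1}=\tilde K_1/\bar D=\tilde K_1/D$.

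The only real obstacle is cosmetic: one has to keep the $z\mapsto -z$ conjugations straight when pulling $z$-dependent scalars through $\Lb\,\cdot\,,\,\cdot\,\Rb$, and to switch fluently between the two equivalent forms $f_i(z)=-g_{i'}(-z)$ and $\bar f_i=-g_{i'}$ of Corollary~\ref{c:a-sym} at each use. Once these bookkeeping conventions are fixed, all three identities fall out mechanically, with no further analytic input required.
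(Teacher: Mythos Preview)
Your proposal is correct and follows essentially the same route as the paper. The paper's proof is terser: it treats the determinant identity $\det P = 1-\sum f_ig_i = \Lb\tilde K_1,\tilde K_1\Rb$ as evident and only writes out the three pairing checks $\Lb\tilde w^i,\tilde K_1\Rb=0$, $\Lb\tilde K^1,\tilde w_j\Rb=0$, $\Lb\tilde w^i,\tilde w_j\Rb=\delta^i_j$, each by expanding and invoking $f_{i'}+\bar g_i=0$. Your Schur-complement remark and your explicit expansion of $\Lb\tilde K_1,\tilde K_1\Rb$ fill in exactly the steps the paper skips, and your verification of the dual frame is the same computation in slightly different packaging.
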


\begin{proof}
Only the dual frame property needs to be verified: 
\begin{equation*}
\begin{split}
\Lb \tilde w^i, \tilde K_1 \Rb &= \Lb w^i, \tilde K_1 \Rb + f_{i'} = \bar g_{i} + f_{i'} = 0, \\
\Lb \tilde K^1, \tilde w_j \Rb &= D^{-1} \Lb \tilde K_1, \tilde w_j \Rb = D^{-1} (f_{j'} + \bar g_j) = 0,\\
\Lb \tilde w^i, \tilde w_j \Rb &= \Lb w^i, \tilde w_j \Rb + f_{i'} \Lb \overline{\tilde K^1}, \tilde w_j \Rb = \delta^i_j.\\
\end{split}
\end{equation*}
Thus $\tilde w^i$'s and $\tilde K^1$ form the dual frame.
\end{proof}

Set 
$$
f_\bullet = (f_1, \ldots, f_8), \qquad g^\bullet = (g_1, \ldots, g_8)^T.
$$
From 
$$
-z \p_k P + A_k P = P E_k,
$$ 
the block decomposition is equivalent to 
\begin{equation} \label{e:APB}
\begin{split}
&\begin{bmatrix}
A_k^{11} + A_k^{12} f_\bullet & -z\p_k g^\bullet + A_k^{11} g^\bullet + A_k^{12} \\
-z\p_k f_\bullet + A_k^{21} + A_k^{22} f_\bullet & A_k^{21} g^\bullet + A_k^{22}
\end{bmatrix} \\
&\qquad = 
\begin{bmatrix}
E_k^{11} & g^\bullet E_k^{22} \\
f_\bullet E_k^{11} & E_k^{22}
\end{bmatrix}.
\end{split}
\end{equation}
Here we are using the notations $\p_1 = x\,\p_x$ and $\p_2 = y.\p_y$. 

In particular we get the equation for $f_i$:
\begin{equation} \label{eq-fi}
\begin{split}
z\p_k f_i &= A_k^{22} f_i + (A_k^{21})_i - \sum_{j = 1}^8 f_j (E_k^{11})_{ji} \\
&= -\frac{\delta_{k1}}{x}\, f_i + (A_k)_{9i} - \sum_{j = 1}^8 \Big( f_j (A_k)_{ji} + f_j (A_k)_{j9} f_i \Big).
\end{split}
\end{equation}
Since $P$, $E$ can be solved recursively, we may reinterpret \eqref{eq-fi} as an inhomogeneous perturbation of the irregular ODE $zx\p_x\,h = -x^{-1} h$. Similar observation applies to $g_i$ too.

The first few terms of the formal power series $g_i$'s are listed below: \small
\begin{equation*} 
\begin{split}
g_1 &= -x^2 y (1 + 2zx + 6x^2 z^2 + 24 z^3 x^3 + (120 z^4 + 5y)x^4 \\
&\qquad + (720 z^5 + 63 yz) x^5 + (5040 z^6 + 642 yz^2) x^6 + \cdots), \\
g_2 &= -x^3 y (1 + 4zx + 18 z^2 x^2 + 96 z^3 x^3 + (600 z^4 + 7y) x^4 \\
&\qquad + (4230 z^5 + 115 yz) x^5 + (35280 z^6 + 1448 yz^2) x^6 +\cdots), \\
g_3 &= \tfrac{1}{2} x^3 y (3 + 14 zx + 70 z^2 x^2 + 404 z^3 x^3 + (2688z^4 + 23 y) x^4 \\
&\qquad + (20376 z^5 + 407 yz) x^5 + (173808 z^6 + 5454 y z^2) x^6 +\cdots), \\
g_4 &= -x^4 y (1 + 7zx + 46 z^2 x^2 + 326 z^3 x^3 + (2556 z^4 + 9y) x^4 \\
&\qquad + (22212 z^5 + 192 yz) x^5 +\cdots), 
\end{split}
\end{equation*}
\begin{equation*}
\begin{split}
g_5 &= \tfrac{1}{2} x^4 y (3 + 23 zx + 162 z^2 x^2 + 1214 z^3 x^3 + (9972 z^4 + 29 y) x^4\\
&\qquad + (90180 z^5 + 654 yz) x^5 + \cdots), \\
g_6 &= -x(1 + zx + 2z^2 x^2 + 6z^3 x^3 + (24z^4 + 3y)x^4 + (120 z^5 + 30 yz) x^5 \\
&\qquad + (720 z^6 + 253 y z^2) x^6 + (5040 z^7 + 2168 yz^3) x^7 + \cdots), \\
g_7 &= \tfrac{1}{2} x(1 + zx + 2z^2 x^2 + 6z^3 x^3 + (24 z^4 + 5y) x^4 + (120 z^5 + 54 yz) x^5 \\
&\qquad + (720 z^6 + 489 yz^2) x^6 + (5040 z^7 + 4472 yz^3) x^7 + \cdots), \\
g_8 &= x^2(1 + 3zx + 11z^2 x^2 + 50 z^3 x^3 + (274 z^4 + 6y) x^4 + (1764 z^5 + 87 yz) x^5 \\
&\qquad + (13068 z^6 + 986 yz^2) x^6 + (109584 z^7 + 10803 yz^3) x^7 + \cdots).
\end{split}
\end{equation*}
\normalsize
All of them are indeed named special generating series. In the following we give explanations for the main sub-series $g_i^\circ$ with the lowest $y$ degree. 

The main sub-series in the three series $g_1$, $g_6$, $g_7$, denoted by $g_i^\circ(s)$ with $s = zx$, are multiple of the factorial series $g(s) = \sum_{n = 0}^\infty n! \,s^n$, and $g_2^\circ(s)$ is essentially its derivative $g'(s) = \sum_{n = 1}^\infty n \cdot n! \, s^{n - 1}$.

The coefficients of $g_8^\circ$ is known as \emph{Stirling numbers of first kind}, which counts the number of $\sigma \in S_{n + 1}$ with exactly two cycles. It satisfies $a_0 = 1$,
\begin{equation} \label{e:S1}
a_{n} = (n + 1) a_{n - 1} + n!, \qquad n \ge 2.
\end{equation}
Its closed form is simply given by
\begin{equation} \label{e:S1'}
a_n = (n + 1)! H_{n + 1},
\end{equation}
where $H_n = \sum_{k = 1}^n 1/k$ is the harmonic series. 

The coefficients of $g_4^\circ$, treated as $a_1 = 0$, $a_2 = 1$, $a_3 = 7$, etc., satisfy
$$
a_n = n! (n - H_n) 
$$
Recursively, $a_{n} = n a_{n - 1} + (n - 1) (n - 1)!$ for $n \ge 1$.

For $g_3^\circ$, we consider the series \small
\begin{equation*}
\begin{split}
g_3 + \tfrac{1}{2} g_2 &= x^3 y(1 + 5zx + 26 z^2 x^2 + 154 z^3 x^3 + 1044 z^4 x^4 + 8028 z^5 x^5 + \cdots) \\
&\qquad + \tfrac{1}{2} x^7 y^2 (9 + 177zx + 2558 z^2 x^2 + \cdots) + \cdots. 
\end{split}
\end{equation*}
\normalsize
The coefficients $1, 5, 26, \ldots$ satisfy
$$
a_n = (n + 1)! (H_{n + 1} - 1).
$$
Recursively, $a_0 = 0$ and $a_n = (n + 1) a_{n - 1} + n!$ for $n \ge 1$.

For $g_5^\circ$, similarly, we consider the series \small
\begin{equation*}
\begin{split}
g_5 + \tfrac{1}{2} g_4 &= x^4 y (1 + 8zx + 58z^2 x^2 + 444 z^3 x^3 + 3708 z^4 x^4 + 33984 z^5 x^5 + \cdots) \\
&\qquad + \tfrac{1}{2} x^8 y^2 (11 + 270 zx + \cdots) + \cdots.
\end{split}
\end{equation*}
\normalsize
The coefficients $1, 8, 58, \ldots$ satisfy
$$
a_n = (n + 2)! (H_{n + 2} - 2) + (n + 1)!. 
$$
Recursively, $a_0 = 0$ and $a_n = (n + 2) a_{n - 1} + n \cdot n!$ for $n \ge 1$.

\begin{remark}
The calculations suggest that the only essential power series in $zx$ to be considered is the factorial series $g$. All the other series arise from standard algebraic operations and/or differentiations on the exponents (Frobenius method) which produces the harmonic series naturally. 

It is easy to see that 
$$
h := x g(-zx) = \sum_{n = 0}^\infty (-1)^n n! z^n x^{n + 1}
$$ 
satisfies the irregular ODE
$$
zx h' = -\frac{1}{x} h + 1.
$$
This is the simplest inhomogeneous perturbation of the equation 
\begin{equation} \label{k-eq}
zx h_0' = - \frac{1}{x} h_0,
\end{equation}
whose solution is $h_0 = A e^{1/(z x)}$. Equation \eqref{k-eq} is precisely the equation which appears in the kernel part of \eqref{QDE-sym}, thus we call it the \emph{kernel equation}. 

All the series $f_i$'s and hence $g_i$'s are all determined through certain perturbations of the kernel equation. It is important to locate the topological or geometric data which determines the perturbation.   
\end{remark}

For quantum cohomology, the most important part of the frame $\tilde w_i = w_i + f_i K_1$ in \eqref{frame} is however its restriction to the slice $z = 0$. This can be determined in a purely algebraic manner:

\begin{theorem} \label{t:frame}
Denote $f_1(x, y, 0), \ldots, f_8(x, y, 0)$ by 
$$
x^2 h_1, \, x h_2, \, x h_3, \, h_4, \, h_5, \, x^{-1} h_6, \, x^{-1} h_7, \, x^{-2} h_8
$$ 
respectively, and let $t := x^4 y$ be the Calabi--Yau variable. Then all $h_i$'s are power series in $t$. $X = h_1(t)$ satisfies the $9$-th degree polynomial 
\begin{equation} \label{e:key-poly}
F(X) = 1 + X + 6t X^2 + 3t^2 X^3 -2t^3 X^5 + 3t^4 X^6 + t^6 X^9
\end{equation} 
with explicit analytic formula given by Lambert's generalized binomial series
\begin{equation} \label{e:det}
h_1(t) = -\mathscr{B}_9(t)^6 = -\sum_{n = 0}^\infty \frac{2}{3n + 2} {9n + 6 \choose n} t^n
\end{equation}
(cf.~\cite[\S 5.4, p.201]{Knuth}), which converges in $|t| < 8^8/9^9$. 

Moreover, all $h_j$'s, $j = 2, \ldots, 8$, can be explicitly expressed as polynomials in $h_1$ with degree $\le 8$, and with coefficients in $\Bbb Q (t)$.
\end{theorem}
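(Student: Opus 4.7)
The plan is to specialize the block-diagonalization equations to $z=0$, where the Riccati system for the entries $f_1, \ldots, f_8$ of the frame \eqref{frame} becomes purely algebraic, and then exploit the natural $\Bbb C^\times$-grading to descend to a single variable. Rewriting the compatibility \eqref{e:APB} in the form
\begin{equation*}
z\p_k f_\bullet \,=\, A_k^{21} + A_k^{22}\, f_\bullet \,-\, f_\bullet\, A_k^{11} \,-\, (f_\bullet A_k^{12})\, f_\bullet, \qquad k = 1, 2,
\end{equation*}
and setting $z = 0$ yields $16$ algebraic equations in $f_1, \ldots, f_8$ over $\Bbb Q[x, x^{-1}, y]$, the $x^{-1}$ entering only through $A_1^{22} = -1/x$. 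This overdetermined system is consistent by the flatness of $\nabla$.

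To reduce to one variable, invoke the grading of Lemma \ref{l:weight} with $\deg x = -1$, $\deg y = 4$, so that $\deg t = 0$ and the matrices $A_k$ are homogeneous. Each $f_i$ must then be homogeneous of weight $d_i - 2$, where $d_i \in \{0,1,1,2,2,3,3,4\}$ is the cohomological degree of $w_i$ and $\deg K_1 = 2$; writing $f_i = x^{\,2-d_i}\, h_i(x, y)$ forces $h_i$ to be a weight-zero function of $(x, y)$, hence a function of $t = x^4 y$ alone, giving the scalings claimed in the statement. That each $h_i$ is a formal \emph{power} series in $t$ (not merely Laurent) follows from the recursive construction of the block-diagonalization starting from the frame of Definition \ref{d:q-frame}, and may also be read off from the explicit leading behaviors of the $g_j$'s listed in \S\ref{s:ex-(2,1)} via the duality $f_i(\cdot, \cdot, 0) = -g_{9-i}(\cdot, \cdot, 0)$ of Corollary \ref{c:a-sym}. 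Substituting $f_i = x^{\,2-d_i} h_i(t)$ back, the powers of $x$ cancel and the $16$ relations descend to $16$ polynomial relations in $h_1, \ldots, h_8$ over $\Bbb Q[t]$. By successive elimination, each $h_j$ for $j = 2, \ldots, 8$ is expressible as a polynomial in $h_1$ with coefficients in $\Bbb Q(t)$, and compatibility of the remaining relations produces a single polynomial equation $F(h_1; t) = 0$ which, after clearing denominators, is the degree-$9$ polynomial \eqref{e:key-poly}. The degree bound $\le 8$ on the $R_j(h_1; t)$'s then follows from working modulo $F$ in the $\Bbb Q(t)$-algebra $\Bbb Q(t)[h_1]/(F)$.

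The computational heart, and the main obstacle, is this elimination step: while conceptually routine once the $16$ equations have been written down from the explicit matrices \eqref{A-21}, \eqref{QDE-sym}, \eqref{QDE-2}, the delicate cancellations that collapse the eliminant to the sparse form \eqref{e:key-poly} are best verified with computer assistance. Finally, for the closed form, set $v := \mathscr{B}_9(t)$, which satisfies $v = 1 + t v^9$, and put $X := -v^6$. Using $t^k v^{9k} = (v - 1)^k$ to rewrite each term of $F(-v^6)$ as a polynomial in $v$, one checks by direct expansion that everything telescopes and $F(-v^6) = 0$. Since $F_X(-1, 0) = 1 \ne 0$, the implicit function theorem produces a unique power-series solution $h_1(t)$ of $F = 0$ with $h_1(0) = -1$, forcing $h_1 = -\mathscr{B}_9^6$. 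The convergence radius $8^8/9^9$ is the standard radius of $\mathscr{B}_9$ recorded in \cite[\S 5.4]{Knuth}.
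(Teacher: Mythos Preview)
Your approach is correct and the overall architecture matches the paper's: specialize \eqref{eq-fi} to $z=0$, use homogeneity to descend to the single variable $t$, eliminate $h_2,\ldots,h_8$ to obtain a polynomial relation for $h_1$, and verify $-\mathscr{B}_9(t)^6$ is the unique power-series root. The closed-form verification and the degree bound via $\Bbb Q(t)[h_1]/(F)$ are handled exactly as in the paper.

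The one substantive difference is in the elimination step, where you flag a need for computer assistance. The paper avoids this by a cleaner device: it uses \emph{only} the $k=2$ system at $z=0$ (eight equations, your \eqref{e:NL} analogue), and observes that with $h_1$ treated as a parameter these equations are \emph{linear} in the seven unknowns $h_2,\ldots,h_8$. Consistency of eight linear equations in seven unknowns forces an $8\times 8$ determinant (constant column adjoined to the coefficient matrix) to vanish, and expanding that determinant produces $F(h_1)$ directly in its sparse form \eqref{e:key-poly}. Cramer's rule then gives each $h_j$ as a rational function of $h_1$, and irreducibility of $F$ over $\Bbb Q(t)$ lets one rewrite these as polynomials of degree $\le 8$. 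This bypasses the general elimination you describe; your sixteen equations from both $k=1,2$ are of course compatible, but half of them are redundant for this purpose, and carrying them makes the algebra opaque. The determinant trick is worth internalizing.
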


\begin{proof}
We apply the second equation in \eqref{eq-fi} to the case $k = 2$ and restrict to the case $z = 0$. The derivative term disappears and we arrive at the following non-linear system
\begin{equation} \label{e:NL}
\begin{split}
h_2 + \tfrac{1}{2} h_3 + t h_1^2 &= 0, \\
h_4 + h_5 + t h_1 h_2 &= 0,\\
h_5 + t h_1 h_3 &= 0, \\
h_6 + h_7 - \tfrac{1}{2} t h_1 + t h_1 h_4 &= 0,\\
h_7 + t h_1 + t h_1 h_5 &= 0,\\
\tfrac{1}{2} h_8 + t h_1 - \tfrac{1}{2} t h_2 + \tfrac{1}{4} t h_3 + t h_1 h_6 &= 0, \\
h_8 + t h_2 - \tfrac{1}{2} t h_3 + t h_1 h_7 &= 0,\\
-1 + h_3 + h_4 - \tfrac{1}{2} h_5 + h_1 h_8 &= 0.
\end{split}
\end{equation}
By viewing $t, h_1$ as parameters, we may regard \eqref{e:NL} as a linear system in the 7 unknowns $h_2, \ldots, h_8$. Since the 8 equations in \eqref{e:NL} are consistent, we must have 
\begin{equation*}
\begin{vmatrix}
t h_1^2 & 1 & \tfrac{1}{2}\\
& t h_1 && 1 & 1 \\
&& t h_1 && 1 \\
-\frac{1}{2} t h_1 &&& t h_1 & & 1 & 1 \\
t h_1 &&&& t h_1 && 1 \\
t h_1 &-\frac{1}{2} t & \frac{1}{4} t &&& t h_1 && \tfrac{1}{2} \\
& t & -\tfrac{1}{2} t &&&& t h_1 & 1 \\
-1 && 1 & 1 & -\tfrac{1}{2} &&& h_1 
\end{vmatrix} = 0.
\end{equation*}
where the constant terms are put in the first column. It is straightforward to compute the determinant to get the polynomial \eqref{e:key-poly}:
$$
t(1 + h_1 + 6t h_1^2 + 3t^2 h_1^3 -2t^3 h_1^5 + 3t^4 h_1^6 + t^6 h_1^9) = t F(h_1).
$$

Now by Carmer's rule, all $h_2, \ldots, h_8$ can be solved in terms of rational expressions in $h_1$ (and $t$). Since $F(X)$ is irreducible, the elementary field extension theory shows that all those rational expressions can be written as polynomials in $h_1$ with degree $\le 8$, with coefficients in $\Bbb Q(t)$.

It remains to prove \eqref{e:det}. Once we know the expected expression in the generalized binomial series, the proof becomes a direct substitution as to be shown below. In reality, the expression \eqref{e:det} is found by calculating the first few terms from the recursive relations deducing from \eqref{e:key-poly} and by an internet search on integer sequences.

We start with the definition and properties of Lambert's generalized binomial series. The general reference on this is \cite[\S 5.4]{Knuth}. For any $s \in \Bbb R_{\ge 0}$,
$$
\mathscr{B}_s(t) := \sum_{n \ge 0} {sn + 1 \choose n} \frac{1}{sn + 1} t^n.
$$
Moreover, for all $l \in \Bbb R$, taking powers corresponds to \emph{twists}:
\begin{equation} \label{e:Bsl}
\mathscr{B}_s(t)^l = \sum_{n \ge 0} {sn + l \choose n} \frac{l}{sn + l} t^n.
\end{equation} 
It is then easily seen that  $\mathscr{B}_s(t)$ satisfies a simple algebraic equation:
\begin{equation} \label{e:Leq}
t \mathscr{B}_s(t)^s = \mathscr{B}_s(t) - 1.
\end{equation}
In our current situation we need only the case $s = 9$. Namely for $b := \mathscr{B}_9(t)$ we have an equation 
\begin{equation} \label{e:Leq9}
t b^9 = b - 1,
\end{equation}
and we would like to show that $h_1 := -b^6$ satisfies the equation $F(X) = 0$ in \eqref{e:key-poly}. This is then of course just a simple algebra. Indeed by substituting $X = -b^6$ in \eqref{e:key-poly} and making use of \eqref{e:Leq9}, we get
$$
-(b - 1)^6 + 3(b - 1)^4 + 2b^3(b - 1)^3 - 3(b - 1)^2 + 6b^3 (b - 1) - b^6 + 1
$$
which reduced to zero as expected. The proof is complete.
\end{proof}

\begin{proposition} \label{p:frame}
All $h_2, \ldots h_8$ are polynomials in $b := \mathscr{B}_9(t)$ with degree $\le 8$ with coefficients in $\Bbb Q[t]$. Explicit formulae are given below:
\begin{equation}
\begin{split}
h_1 &= -b^6, \\
h_2 &= \tfrac{1}{2} b^3 - b^4, \qquad h_3 = b^3, \\
h_4 &= \tfrac{1}{2}(1 + b) - b^2, \qquad h_5 = -1 + b, \\
h_6 &= -\tfrac{1}{2} b^7 t - b^8 t, \qquad h_7 = b^7 t, \\
h_8 &= b^5 t.
\end{split}
\end{equation}
\end{proposition}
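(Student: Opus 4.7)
The plan is a direct calculation that exploits the cascade structure of the polynomial system \eqref{e:NL} together with the algebraic identity $tb^9 = b - 1$ from \eqref{e:Leq9}, which $b := \mathscr{B}_9(t)$ satisfies.

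First I would treat \eqref{e:NL} as a system in the seven unknowns $h_2, \ldots, h_8$ with $h_1, t$ as parameters. Ordered appropriately, the first, second, third, fourth, fifth, and seventh equations give a back-substitution chain that expresses six of the unknowns as polynomials in the two generators $h_1, h_3$ over $\mathbb{Q}[t]$:
\begin{align*}
h_2 &= -\tfrac{1}{2} h_3 - t h_1^2, & h_5 &= -t h_1 h_3, \\
h_4 &= \tfrac{3}{2} t h_1 h_3 + t^2 h_1^3, & h_7 &= -t h_1 + t^2 h_1^2 h_3, \\
h_6 &= \tfrac{3}{2} t h_1 - \tfrac{5}{2} t^2 h_1^2 h_3 - t^3 h_1^4, & h_8 &= t h_3 + 2 t^2 h_1^2 - t^3 h_1^3 h_3.
\end{align*}
Plugging these into the sixth equation of \eqref{e:NL} yields a linear equation for $h_3$:
\[
t h_3 \bigl( 1 - 3 t^2 h_1^3 \bigr) = - t h_1 - 3 t^2 h_1^2 + t^4 h_1^5.
\]

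Next I would specialize $h_1 = -b^6$ (established in Theorem~\ref{t:frame}) and reduce every power $b^k$ with $k \ge 9$ using $tb^9 = b - 1$. A short calculation shows that both sides of the displayed equation acquire the common factor $3b^2 - 6b + 4$, so $h_3 = b^3$. Substituting this back into the six cascade formulae and repeatedly applying the consequences $tb^{12} = b^4 - b^3$, $t^2 b^{18} = (b-1)^2$, $t^2 b^{15} = tb^7 - tb^6$, $t^3 b^{21} = tb^5 - 2tb^4 + tb^3$, and so on, produces exactly the closed forms stated in the proposition, each a polynomial in $b$ of degree at most $8$ with coefficients in $\mathbb{Q}[t]$.

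The remaining step is to verify that the eighth equation of \eqref{e:NL} is compatible. But this equation is precisely the one shown in the proof of Theorem~\ref{t:frame} to reduce to $F(h_1) = 0$, hence is automatic from $h_1 = -b^6$ together with \eqref{e:Leq9}. The only real obstacle is the bookkeeping of the modular reductions of high powers of $b$; this is algorithmic, since $\mathbb{Q}[t][b]/(tb^9 - b + 1)$ is a free $\mathbb{Q}[t]$-module with basis $\{1, b, b^2, \ldots, b^8\}$, which simultaneously yields the claimed degree bound.
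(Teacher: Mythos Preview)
Your proposal is correct and follows essentially the same approach as the paper: solve the system \eqref{e:NL} with $h_1 = -b^6$ and reduce powers of $b$ via $tb^9 = b - 1$. The paper's proof is terse (``straightforward yet lengthy manipulations \ldots\ with help from Mathematica''), whereas your cascade organization---expressing $h_2, h_4, h_5, h_6, h_7, h_8$ polynomially in $h_1, h_3$ from six equations, then solving the remaining linear equation for $h_3$---makes the verification hand-checkable and, as a bonus, shows directly that the coefficients lie in $\mathbb{Q}[t]$ rather than merely $\mathbb{Q}(t)$ (Theorem~\ref{t:frame} only asserts the latter).

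One minor correction: the eighth equation of \eqref{e:NL} is not by itself what reduces to $F(h_1) = 0$ in the proof of Theorem~\ref{t:frame}; rather, the vanishing of an $8 \times 8$ determinant encoding the consistency of all eight equations gives $F(h_1) = 0$. Your compatibility claim is still correct, but the justification is simply that the full system is known to be consistent (it arises from an actual solution), so once seven independent equations determine $h_2, \ldots, h_8$, the eighth is automatic. Alternatively, plugging your closed forms into equation~8 and reducing via $tb^9 = b - 1$ verifies it directly.
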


\begin{proof}
The formal part follows form Theorem \ref{t:frame}, \eqref{e:Bsl} and \eqref{e:Leq9}.

The explicit formulae can be obtained by straightforward yet lengthy manipulations. The table is obtained with help from Mathematica. 
\end{proof}

\begin{remark}
The space spanned by $1, b, b^2$ is not covered by the span of $h_4$ and $h_5$. This \emph{missing direction} deserves further investigation.
\end{remark}

\begin{remark}
In terms of generalized hypergeometric series, we have
$$
b = \mathscr{B}_9(t) = F(\tfrac{1}{9}, \cdots, \tfrac{8}{9}; \tfrac{2}{8}, \cdots, \widehat{\tfrac{8}{8}}, \tfrac{9}{8} ; \tfrac{9^9}{8^8} t),
$$
and $b^l = F(\tfrac{l}{9}, \cdots; \tfrac{l + 1}{8}, \cdots ; \tfrac{9^9}{8^8} t)$ is the $(l - 1)$-th shift with $\tfrac{9}{9}$ and $\tfrac{8}{8}$ skipped.
\end{remark}

It is also possible to determine $f_i(x, y, z)$ without setting $z = 0$. The main idea is to make use of the homogeneity of the Gromov--Witten theory. The natural coordinates system is given by the weight zero variables
\begin{equation} \label{e:coor}
s = zx, \qquad t = x^4 y.
\end{equation}
Here $t$ is the Calabi--Yau variable. It follows that
\begin{equation*}
\begin{split}
t \p_t = y \p_y, \qquad s\p_s = x\p_x - 4 y\p_y.
\end{split}
\end{equation*}
The weight zero normalizations $\h_i$'s of $f_i$'s are given similarly by 
$$
x^2 \h_1, \, x\h_2, \, x\h_3, \, \h_4, \, \h_5, \, x^{-1} \h_6, \, x^{-1} \h_7, \, x^{-2} \h_8.
$$
After some elementary manipulations, the PDE system \eqref{eq-fi} in $(s, t)$ coordinates then reads as, in the $t$ direction:
\begin{equation} \label{e:DEt}
\begin{split}
t\p_t \h_1 &= -s^{-1} (\h_2 + \tfrac{1}{2} \h_3 + t \h_1^2), \\
t\p_t \h_2 &= -s^{-1} (\h_4 + \h_5 + t \h_2 \h_1), \\
t\p_t \h_3 &= -s^{-1} (\h_5 + t \h_3 \h_1),\\
t\p_t \h_4 &= -s^{-1} (-\tfrac{1}{2} t \h_1 + \h_6 + \h_7 + t \h_4 \h_1),\\
t\p_t \h_5 &= -s^{-1} (t \h_1 + \h_7 + t \h_5 \h_1),\\
t\p_t \h_6 &= -s^{-1} (t \h_1 - \tfrac{1}{2} t \h_2 + \tfrac{1}{4} t \h_3 + \tfrac{1}{2} \h_8 + t \h_6 \h_1), \\
t\p_t \h_7 &= -s^{-1} (t \h_2 - \tfrac{1}{2} t \h_3 + \h_8 + t \h_7 \h_1),\\
t\p_t \h_8 &= -s^{-1} (-t + t \h_3 + t \h_4 - \tfrac{1}{2} t \h_5 + t \h_8 \h_1);
\end{split}
\end{equation}
and in the $s$ direction: 
\begin{equation} \label{e:DEs}
\begin{split}
s\p_s \h_1 &= -2\h_1 + s^{-1}(4 \h_2 + \h_3 + \Phi \h_1),\\ 
s\p_s \h_2 &= - \h_2 + s^{-1}(-\tfrac{1}{2} + 4\h_4 +3\h_5 + \Phi \h_2),\\ 
s\p_s \h_3 &= -\h_3 + s^{-1}(1 + 4\h_5 + \Phi \h_3),\\ 
s\p_s \h_4 &= s^{-1}(-\tfrac{3}{2} t \h_1 + 4\h_6 + 3\h_7 + \Phi \h_4),\\ 
s\p_s \h_5 &= s^{-1}(3t \h_1 + 4\h_7 + \Phi \h_5),\\ 
s\p_s \h_6 &= \h_6 + s^{-1}(4t \h_1 - \tfrac{3}{2} t \h_2 + \tfrac{3}{4} t \h_3 + \h_8 + \Phi \h_6),\\ 
s\p_s \h_7 &= \h_7 + s^{-1}(3t \h_2 - \tfrac{3}{2} t \h_3 + 4 \h_8 + \Phi \h_7),\\ 
s\p_s \h_8 &= 2\h_8 + s^{-1}(-3t + 4t \h_3 + 3t \h_4 - \tfrac{3}{2} t\h_5 + \Phi \h_8),
\end{split}
\end{equation}
where $\Phi := 3t \h_1 - \h_6 + \tfrac{1}{2} \h_7 - 1$.

By induction on the degree of $s$, it is evident that \eqref{e:DEt}, with initial values $\tilde h_i(0, t) = h_i(t)$ in Proposition \ref{p:frame}, determines $\tilde h_i$'s completely. However, this does not give information on the structures. Instead, we make one more normalization with $\tilde h_i = L_i$ for $1 \le i \le 3$ and
\begin{equation}
\tilde h_i = t L_i, \qquad 4 \le i \le 8,
\end{equation}
and rewrite \eqref{e:DEs} as
\begin{equation} \label{e:DEs'}
\begin{split}
s\p_s L_1 &= -2L_1 + s^{-1}(4 L_2 + L_3 + \Phi L_1),\\ 
s\p_s L_2 &= -L_2 + s^{-1}(-\tfrac{1}{2} + 4 t L_4 + 3t L_5 + \Phi L_2),\\ 
s\p_s L_3 &= -L_3 + s^{-1}(1 + 4t L_5 + \Phi L_3),\\ 
s\p_s L_4 &= s^{-1}(-\tfrac{3}{2} L_1 + 4 L_6 + 3 L_7 + \Phi L_4),\\ 
s\p_s L_5 &= s^{-1}(3 L_1 + 4 L_7 + \Phi L_5),\\ 
s\p_s L_6 &= L_6 + s^{-1}(4 L_1 - \tfrac{3}{2} L_2 + \tfrac{3}{4} L_3 + L_8 + \Phi L_6),\\ 
s\p_s L_7 &= L_7 + s^{-1}(3 L_2 - \tfrac{3}{2} L_3 + 4 L_8 + \Phi L_7),\\ 
s\p_s L_8 &= 2 L_8 + s^{-1}(-3 + 4 L_3 + 3 L_4 - \tfrac{3}{2} L_5 + \Phi L_8),
\end{split}
\end{equation}
where now $\Phi = t (3L_1 - L_6 + \tfrac{1}{2} L_7) - 1 \equiv -1 \pmod{t}$.

The non-linear system \eqref{e:DEs'} reduces to a linear system when we set $t = 0$. In fact the resulting system is a perturbation of the irregular ODE \eqref{k-eq}, and it is elementary to check that the the solution is given by those special power series related to $g_i^\circ(x, 0, -z)$ written down before.

\subsection{Dubrovin connection on $H(X')$}

Under the frame \eqref{frame}, the connection matrix $E_i$, $i = 1, 2$ are decomposed into two diagonal blocks 
$$
E_i^{11} = A_i^{11} + A_2^{21} f_\bullet
$$ 
and $E_i^{22}$. From \eqref{e:APB} and \eqref{QDE-sym}, they are determined by $A_i$ and $P$ as \small
\begin{equation} \label{e:B11}
E_1^{11} = 
\begin{bmatrix}
xy f_1 & xy f_2 & xy f_3 & xy(-\frac{1}{2} + f_4) & xy(1 + f_5) & xy f_6 & xy f_7 & xy f_8 \\
&&&&& -\frac{1}{2} xy & xy \\
1 &&&&& \frac{1}{4} xy & -\frac{1}{2} xy\\
&&&&&&& xy \\
& 1 &&&&&& -\frac{1}{2} xy\\
f_1 & f_2 & f_3 & f_4 & f_5 & f_6 & f_7 & f_8 \\
-\frac{1}{2} f_1 & -\frac{1}{2} f_2 & - \frac{1}{2} f_3 & 1 - \frac{1}{2} f_4 & -\frac{1}{2} f_5 & -\frac{1}{2} f_6 & -\frac{1}{2} f_7 & -\frac{1}{2} f_8 \\
&&&&& 1
\end{bmatrix},
\end{equation}
\begin{equation} \label{B1-22}
\begin{split}
E_1^{22} &= -\frac{1}{x} + \Big( -\frac{1}{2} g_2 + g_3 + xy\, g_8\Big) \\
&= -\frac{1}{x}\Big(1 - yx^4(3 + 12 zx + 55 z^2 x^2 + 300 z^3 x^3 \\
&\qquad \qquad \qquad + 1918 z^4 x^4 + 14112 z^5 x^5 + \cdots)\\
&\qquad \qquad -y^2 x^8(21 + 348 z x + \cdots) + \cdots \Big).
\end{split}
\end{equation} 
\normalsize
Notice that the $i$-th row is affected by $f_\bullet = (f_1, \ldots, f_8)$ with a multiple given by the $(i, 9)$-th entry of $z(x\,\p_x)$ in \eqref{QDE-sym}. 

Exactly the same pattern applies to $E_2^{11} = A_2^{11} + A_2^{21} f_\bullet$ too, where now only the first row is added by $xy f_\bullet$:
\small
\begin{equation} \label{e:B211}
E_2^{11} = 
\begin{bmatrix}
xy f_1 & xy f_2 & xy f_3 & xy(-\frac{1}{2} + f_4) & xy(1 + f_5) & y + xy f_6 & xy f_7 & xy f_8 \\
1 &&&&& -\frac{1}{2} xy & xy \\
\frac{1}{2} &&&&& \frac{1}{4} xy & -\frac{1}{2} xy & y\\
& 1 &&&&&& xy \\
& 1 & 1 &&&&& -\frac{1}{2} xy\\
& & & 1 \\
&&& 1 & 1 \\
&&&&& \frac{1}{2} & 1
\end{bmatrix},
\end{equation}
\begin{equation} \label{B1-23}
E_2^{22} = \frac{1}{x} \Big(yx^4 (1 + 3zx + 11 z^2 x^2 + \cdots)\Big) = yx \,g_8.
\end{equation}
\normalsize

Now the Dubrovin connection on $H(X')$ follows from the BF/GMT procedure applied to $E_1^{11}$ and $E_2^{11}$. Namely to solve $B$ with
\begin{equation} \label{e:BF0}
z\p_a B = B E_a^{11} - B_0 E_{a; 0}^{11} B^{-1}_0 B, \qquad a = 1, 2,
\end{equation}
where $0$ means its value at $z = 0$.

This is still complicated. But in contrast to the direct computation based on the Picard--Fuchs equations on the $X'$ side, we have a better structure on the connection matrices before the BF/GMT. This allows us to determine the BF/GMT in explicit terms at least for the extremal ray directions, which is given below. 

We denote $\delta = zx\, \p_x$ and define its (pseudo) inverse $\mathscr{I}$ by 
\begin{equation} \label{e:pinv}
\mathscr{I} \phi = \mathscr{I}(\phi - \phi(z = 0)) = \int \frac{\phi - \phi(z = 0)}{zx}\,dx.
\end{equation}
For example, in terms of formal series expansion, we have
\begin{equation*}
\begin{split}
\mathscr{I} f_1 &= \tfrac{3}{3} x^3 - \tfrac{11}{4} zx^4 + \tfrac{50}{5} z^2 x^5 + \cdots \pmod{y}, \\
\mathscr{I} (f_1/x) &= \tfrac{3}{2} x^2 - \tfrac{11}{3} zx^3 + \tfrac{50}{4} z^2 x^4 + \cdots \pmod{y}.
\end{split}
\end{equation*}

\begin{lemma} \label{l:extr}
The Birkhoff factorization matrix $B$ modulo $y$ is given by \small
\begin{equation} \label{e:B-mod-y}
\begin{bmatrix}
1 \\
& 1 \\
&& 1 \\
&&& 1 \\
&&&& 1 \\
-\mathscr{I}^2 (f_1/x) & \mathscr{I} f_2 & \mathscr{I} f_3 &&& 1 \\
\frac{1}{2} \mathscr{I}^2 (f_1/x) & -\frac{1}{2} \mathscr{I} f_2 & -\frac{1}{2}\mathscr{I} f_3 &&&& 1 \\
\mathscr{I}^3 (f_1/x - f_3) & -\mathscr{I}^2 f_2 & -\mathscr{I}^2 f_3 &&&&& 1
\end{bmatrix}.
\end{equation} \normalsize
In particular, by writing $B = I + N$ we then have $N^2 = 0$ and $B^{-1} = I - N$.
\end{lemma}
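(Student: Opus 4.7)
The plan is to verify the formula for $B$ modulo $y$ by direct substitution into the BF defining equation \eqref{e:BF0}. The starting observation is that modulo $y$ the $f_i$'s simplify drastically: combining Corollary \ref{c:a-sym} with the explicit expansions of $g_1,\ldots,g_5$ listed after Lemma \ref{l:dual}, each of $f_4, f_5, f_6, f_7, f_8$ is divisible by $y$, so modulo $y$ only $f_1, f_2, f_3$ contribute. Substituting into \eqref{e:B11} and \eqref{e:B211}, the $f$-dependent part of $E_1^{11}$ survives only in rows $6, 7$ and columns $1, 2, 3$, while $E_2^{11}$ becomes purely classical (all its $f$-entries were multiples of $y$ and die).

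The nilpotency $N^2 = 0$ of the proposed $N := B - I$ follows at once from the sparsity pattern: its nonzero rows $\{6,7,8\}$ and nonzero columns $\{1,2,3\}$ are disjoint. Hence $B^{-1} = I - N$. A parallel inspection of $E_a^{11}$ modulo $y$ shows that rows $1, 2, 3$ of $E_a^{11}$ have no entries in columns $6, 7, 8$; the same bookkeeping then gives $N_0\, E_{a;0}^{11}\, N_0 = 0$, where $N_0 := N|_{z=0}$. Thus the right-hand side of \eqref{e:BF0} reduces to
\begin{equation*}
B\, E_a^{11} - \bigl(E_{a;0}^{11} + [N_0, E_{a;0}^{11}]\bigr)\, B \pmod y,
\end{equation*}
and the entire claim boils down to a finite system of scalar equations in the nine slots of $N$.

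Each such scalar identity is reduced, by the defining property $z\p_1(\mathscr{I}\phi) = \phi - \phi|_{z=0}$ of the pseudo-inverse \eqref{e:pinv}, to an algebraic equality among the $f_i$'s and their $z=0$ restrictions $h_i$. The $z\p_2$-equation need not be checked separately: the uniqueness of $B$ subject to the initial normalization $B \equiv I \pmod{NE(X')}$, together with the flatness of the block-decomposed connection (Proposition \ref{p:5.4}), propagates the identity to the $y$-direction automatically. The hard part will be the entry $N_{81} = \mathscr{I}^3(f_1/x - f_3)$, where two different generators $f_1$ and $f_3$ must combine under three nested pseudo-inversions; this reflects two distinct paths from column $1$ to row $8$ in the skeleton of $E_1^{11}$ (the chain $8 \to 6 \to 3 \to 1$ and the direct $8 \to 3$ step), and to match them one must invoke the algebraic relations among $h_1, h_2, h_3$ encoded by the system \eqref{e:NL} of Theorem \ref{t:frame}. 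Once this single identity is verified, the remaining six entries reduce to one- or two-step applications of $\mathscr{I}$ and are immediate.
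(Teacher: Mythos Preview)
Your approach---verify the claimed $B$ by substituting into the Birkhoff equation \eqref{e:BF0}---is the dual of the paper's, which instead posits an ansatz $B = I + N$ with $N$ supported in rows $\{6,7,8\}$ and columns $\{1,2,3\}$, and then solves for the nine entries $N_{ij}$ from the requirement that the gauged connection be $z$-free. Both routes lead to the same scalar identities, so the strategies are essentially equivalent; your observations that $f_4,\dots,f_8\equiv 0\pmod y$ and that $N^2=0$ from the sparsity pattern are exactly what the paper uses.

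Where your sketch goes astray is in locating the nontrivial identity and its source. In the paper's construction the entries $N_{62},N_{63},N_{72},N_{73},N_{82},N_{83}$ are solved by one or two applications of $\mathscr{I}$ and are genuinely immediate. The first nontrivial step is $N_{61}$: the recursion gives $N_{61}=\mathscr{I}N_{63}+\mathscr{I}f_1=\mathscr{I}^2 f_3+\mathscr{I}f_1$, and one must then show this equals $-\mathscr{I}^2(f_1/x)$. Once that identity is in hand, $N_{71}$ follows by the same mechanism and $N_{81}=\mathscr{I}N_{83}-\mathscr{I}N_{61}$ drops out with no further input. So the hard entry is $N_{61}$, not $N_{81}$.

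More importantly, the needed relation between $f_1$ and $f_3$ modulo $y$ is a statement about their full $z$-expansions (the factorial series and the Stirling series), and the paper obtains it from the Stirling-number recursion \eqref{e:S1} (equivalently \eqref{e:S1'}). The system \eqref{e:NL} you cite lives at $z=0$; modulo $y$ it degenerates to $t=0$ and yields only the constants $h_3(0)=1$, $h_2(0)=-\tfrac12$, $h_1(0)=-1$, which cannot produce the required $z$-dependent identity $\mathscr{I}^2 f_3+\mathscr{I}f_1=-\mathscr{I}^2(f_1/x)$. If you carry out your verification you will find the missing ingredient is precisely \eqref{e:S1}, not \eqref{e:NL}.
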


\begin{proof}
We will show that its enough to onsider the ansatz 
$$
B = I + N = I + \sum_{i = 6}^8 \sum_{i = 1}^3 N_{ij} e_{ij}.
$$ 
It is clear that $N^2 = 0$ and $B^{-1} = I - N$. We need to solve $N_{ij}$ such that
\begin{equation} \label{e:zfree}
-z( x\,\p_x B) B^{-1} + B E_1^{11} B^{-1} \pmod{y}
\end{equation}
is independent of $z$. From \eqref{e:B11}, only $f_1, f_2, f_3$ in the 6-th and 7-th rows have non-constant contributions. By expanding out \eqref{e:zfree}, the constant entries remain the same as $A_1^{11}$ which are all $1$'s in the $(3, 1)$, $(5, 2)$, $(7, 4)$ and $(8, 6)$ entries. The non-constant entries are in the $3 \times 3$ block as in $(N_{ij})$ as
\begin{equation}
\begin{bmatrix}
-\delta N_{61} + N_{63} + f_1 & -\delta N_{62} + f_2 & -\delta N_{63} + f_3 \\
-\delta N_{71} + N_{73} - \frac{1}{2} f_1 & -\delta N_{72} - \frac{1}{2} f_2 & -\delta N_{73} - \frac{1}{2} f_3 \\
-\delta N_{81} + N_{83} - N_{61} & -\delta N_{82} - N_{62} & -\delta N_{83} - N_{63}  
\end{bmatrix}.
\end{equation}
The $z$ entries from $f_i$'s are then removed by setting 
$$
N_{62} = \mathscr{I} f_2, \quad N_{63} = \mathscr{I} f_3, \quad N_{72} = -\tfrac{1}{2} \mathscr{I} f_2, \quad N_{73} = -\tfrac{1}{2} \mathscr{I} f_3.
$$ 
We may then solve $N_{82} = -\mathscr{I} N_{62} = -\mathscr{I}^2 f_2$ and $N_{83} = -\mathscr{I} N_{63} = -\mathscr{I}^2 f_3$.

We also have 
$$
N_{61} = \mathscr{I} N_{63} + \mathscr{I} f_1 = \mathscr{I}^2 f_3 + \mathscr{I} f_1 = -\mathscr{I}^2 (f_1/x),
$$
where the last equality follows from \eqref{e:S1} or \eqref{e:S1'}. 

Similarly $N_{71} = \frac{1}{2} \mathscr{I}^2 (f_1/x)$. Finally, 
$$
N_{81} = \mathscr{I} N_{83} - \mathscr{I} N_{61} = \mathscr{I}^3(f_1/x - f_3)
$$ 
as expected.
\end{proof}

\begin{corollary} \label{c:GMT}
For local $(2, 1)$ flips, the Dubrovin connection matrices modulo $y$ and up to GMT are given by \small
\begin{equation}
\bar C'_1 = \begin{bmatrix}
0 \\ 0 \\ 1 \\ & 0 \\ & 1 \\
-{3 x^2}/{2} & -{x}/{2} & x \\
{3 x^2}/{4} & {x}/{4} & -{x}/{2} & 1 & 0\\
-{13 x^3}/{9} & -{x^2}/{4} & {x^2}/{2} &&& 1 & 0 & 0
\end{bmatrix}
\end{equation} \normalsize
and $\bar C'_2 = A_2^{11} \pmod{y}$. $\bar C'_1$ determines the GMT in the extremal ray variable as
\begin{equation*}
\begin{split}
&\sigma(s^1 h' + s^2 \xi') \\
&= s^1 h' + s^2 \xi' + \tfrac{3}{4} e^{2s^1} q^{2\ell'} \xi'^2 h' - \tfrac{13}{27} e^{3s^1} q^{3\ell'} \xi'^3 h' \pmod{q^{\gamma'}}.
\end{split}
\end{equation*}
\end{corollary}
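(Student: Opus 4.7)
The plan is to derive the corollary from Lemma~\ref{l:extr} in two stages: first compute $\bar C'_a \pmod y$ by substituting the explicit Birkhoff matrix $B = I + N$ into \eqref{e:BF0}, then apply \eqref{e:dsigma} together with the frame identifications of \S\ref{s:small} to extract the mirror map $\sigma$. Since $N^2 = 0$ (so $B^{-1} = I - N$), the Birkhoff recipe expands into a finite polynomial
$$
\bar C'_a = E_a^{11} + [N,\,E_a^{11}] - N E_a^{11} N - z\,\p_a N + z(\p_a N)\,N,
$$
which is $z$-free by design of $N$.

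For $a = 2$: formula \eqref{e:B211} shows $E_2^{11} \equiv A_2^{11} \pmod y$; by Lemma~\ref{l:extr} the entries of $N \pmod y$ are independent of $y$, so $\p_2 N = y\p_y N \equiv 0 \pmod y$; and the sparsity of $N$ (rows 6--8, cols 1--3) against the strictly lower-triangular $A_2^{11}$ yields $[N, A_2^{11}] \equiv 0 \equiv N A_2^{11} N \pmod y$ by direct inspection, giving $\bar C'_2 \equiv A_2^{11} \pmod y$. For $a = 1$ the $z$-dependent part of $E_1^{11} \pmod y$ is confined to rows 6, 7 (entries $f_\bullet$ and $-\tfrac12 f_\bullet$ from the second summand of \eqref{e:B11}), and $N$ was constructed precisely so that $-z\p_1 N$ cancels them. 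What survives are the $z{=}0$ values of $\mathscr{I} f_2,\ \mathscr{I} f_3,\ \mathscr{I}^2(f_1/x),\ \mathscr{I}^3(f_1/x - f_3)$, which via Corollary~\ref{c:a-sym} reduce to elementary substitutions into the explicit series for $g_6, g_7, g_8 \pmod y$ recorded in \S\ref{s:ex-(2,1)}; these evaluate to the entries tabulated in rows 6--8 of $\bar C'_1$, with the remaining entries inherited unchanged from $A_1^{11}$.

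For the GMT, equation \eqref{e:dsigma} asserts that the first column of $\bar C'_1 \pmod{q^{\gamma'}}$ records $\p\sigma^\mu/\p s^1$ \emph{in the $T'_\mu$-frame on $H(X')$}, whereas $\bar C'_1$ is written in the Birkhoff frame $\bT_\mu$ inherited from $w_i$. The bridge is the normalization $\breve\bT_\mu = \bT_\mu * \bS_0$ (Definition~\ref{d:modf}), the upper-triangular change $w_i \to v_i \leftrightarrow \Tp T'_{\mu(i)}$ from Lemma~\ref{l:1}, and the Jacobian $\mathscr{J}(q')$ of Definition~\ref{d:Jac}. Converting the first column $(0, 0, 1, 0, 0, -\tfrac{3}{2}x^2, \tfrac{3}{4}x^2, -\tfrac{13}{9}x^3)^T$ into the $T'$-frame, the non-zero components of $\p\sigma/\p s^1$ land on $h'$, $\xi'^2 h'$, and $\xi'^3 h'$ (using $h'^2 = 0$ to identify $(\xi' - h')^i h' = \xi'^i h'$ for $i = 2, 3$); integration in $s^1$ via $x = e^{s^1} q^{\ell'}$, combined with $\sigma \equiv \hat\bs \pmod{NE(X')}$, reproduces the displayed formula.

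The main obstacle is the careful bookkeeping of the frame identifications $w_i \to \bT_\mu \to \breve\bT_\mu \leftrightarrow T'_\mu$ required to translate $\bar C'_1$-entries into $\sigma$-components; in particular, the rational coefficients $3/4$ and $-13/27$ encode non-trivial mixing contributions from the Jacobian $\mathscr{J}(q')$, and the cubic coefficient further demands expanding $\mathscr{I}^3(f_1/x - f_3)\big|_{z=0}$ to order $x^3$, which rests on a non-trivial cancellation between the Stirling-number-type coefficients in the series of $g_6$ and $g_8$ modulo $y$.
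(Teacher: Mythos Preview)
Your overall strategy---substitute the explicit $B = I + N$ of Lemma~\ref{l:extr} into the gauge formula, then read off $\sigma$ via \eqref{e:dsigma}---is exactly the intended route, and the paper gives no further proof beyond this. Two points need correction, however.

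\textbf{The $a=2$ case.} Your claim that $[N, A_2^{11}] \equiv 0 \pmod y$ follows ``by direct inspection'' from sparsity is not right. The product $N A_2^{11}$ has entries in column~1, rows 6--8, given by $N_{i2} + \tfrac12 N_{i3}$ (coming from $(A_2^{11})_{21} = 1$ and $(A_2^{11})_{31} = \tfrac12$); these are $\mathscr{I}(f_2 + \tfrac12 f_3)$, $-\tfrac12\mathscr{I}(f_2 + \tfrac12 f_3)$, $-\mathscr{I}^2(f_2 + \tfrac12 f_3)$. Their vanishing requires the \emph{series identity} $f_2 + \tfrac12 f_3 \equiv 0 \pmod y$, equivalently $g_6 + 2 g_7 \equiv 0 \pmod y$, which holds because both $g_6^\circ$ and $g_7^\circ$ are multiples of the factorial series (as recorded in \S\ref{s:ex-(2,1)}). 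This is a genuine input, not a structural triviality.

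\textbf{The GMT extraction.} You over-engineer this step. Neither the normalized frame $\breve\bT_\mu$ nor the Jacobian $\mathscr{J}(q')$ of Definition~\ref{d:Jac} plays any role here: those objects enter the proof of Theorem~\ref{t:small}, not the determination of $\sigma$. Equation \eqref{e:dsigma} says directly that the first column of $C'_a$ records $\partial\sigma^\mu/\partial s^a$ in the frame in which $C'_a$ is written. Since $\bar C'_1$ is expressed in the $w$-based frame, the only conversion needed is the \emph{constant} change of basis $w_i \leftrightarrow v_i \leftrightarrow T'_{\mu(i)}$ (via $T$ and Lemma~\ref{l:1}), followed by integration in $s^1$ using $x = e^{s^1} q^{\ell'}$. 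The coefficients $3/4$ and $-13/27$ arise simply from dividing by the $s^1$-weight ($2$ and $3$ respectively) upon integration, not from any quantum mixing in $\mathscr{J}$.
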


Now we give a simple example where the BF/GMT can be ignored completely, namely the case of simple blow-ups studied in \S \ref{ss:blowup}

\begin{example} [Example \ref{E:F1} continued]
By repeating and specializing the discussion to $(1, 0)$ flips, we are required to block-diagonalize $A_x = A_2 - A_1$ and $A_y = A_2$ with $x := 1/q_1$ and $y = q_2' = q^{\gamma'} e^{t_2'}= q_1 q_2$, where $\gamma'$ is the line class of $X' = P^2$. Here we avoid the notation $q_1'$ or otherwise it should be $q^{\ell'} e^{t_1'}$ for the line class $\ell'$ of $Z' = P^0$! In practice the divisor $D_\infty = (x = 0)$ in the Hopf--M\"obius stripe $\M$ is the K\"ahler moduli of $X'$ with parameter $y$. 

From \eqref{e:qF1} we have
\begin{equation} \label{e:qF1'}
A_x = \left[\begin{array} {ccc|c} & xy & & xy \\ && xy & \\ &&& -1 \\ \hline 1 && -xy & -1/x \end{array}\right], \qquad
A_y = \left[\begin{array} {ccc|c} & xy & y & xy \\ 1 && xy \\ & 1 && \\ \hline && -xy \end{array} \right].
\end{equation}
In the diagonalization process all the formal series $f_\bullet$ and $g^\bullet$ in $x$ do not have constant terms. For the resulting $3 \times 3$ matrices $E_x^{11}$ and $E_y^{11}$, the BF matrix $B$ also reduces to $I_3$ modulo $x$. Thus after substituting $x = 0$ the resulting matrices go to ${\bf 0}_{3}$ and 
$$
A_{\xi'} = \left[\begin{array} {ccc} && y \\ 1 \\ & 1 \end{array} \right],
$$
which recovers the Dubrovin connection matrix on $P^2$ with $y = q^{\gamma'} e^{t'}$.

This property holds for all global blow-ups at points, a well known fact from the degeneration formula. From the point of view of Dubrovin connections, the block-diagonalization needed will follow from the more general case of simple flips. In fact it could be generalized to more general blow-ups along smooth centers \cite{LLW4} which could not be handled directly by the degenerate formula.
\end{example}

\subsection{Quantum invariance along the extremal variable}

Consider the local $(2, 1)$ flip. For $a \in H(X)$, we denote by $[a] = \tilde a(0)$ the $z$-constant part of the deformed vector under the block diagonalization. 

The procedure \eqref{e:APB} gives, for $k = 1, 2$, 
\begin{equation} \label{e:AzB}
A_k^{11} + A_k^{12} f_\bullet = E_k^{11}.
\end{equation}
Let $\hat T_i I = w_i$. The $(i, j)$-th component of the left hand side gives
$$
\langle T_k, T^i, T_j \rangle^X + \langle T_k, T^i, \kappa_0 \rangle^X f_j = \langle T_k, T^i, \widetilde T_j \rangle^X.
$$

\begin{remark}
Under gauge transform, the $z = 0$ part of the LHS in \eqref{e:AzB} should be $\langle T_k, \tilde T^i(0), \tilde T_j(0) \rangle^X$. By Lemma \ref{l:dual} and the block diagonalization, the $\tilde \kappa_0(0)$ part in $\tilde T^i(0)$ has no contribution in the quantum product. The simple effect on $z$ in \eqref{e:AzB} is due to the PDE on $f_i$'s in \eqref{eq-fi}.
\end{remark}

If no more BF/GMT is needed on $E_k^{11}$, which is the case only if we modulo $(x^2, y) $ (by \eqref{e:B11} and $f_1 = -x^2 + \cdots$), then its $(i, j)$-th component equals
$$
\langle T_k', T'^i, T'_j \rangle^{X'} 
$$
where $T_j = \Tp T'_j$. The only non-trivial case says that 
\begin{equation} \label{e:mod-y}
\langle \xi - h, \xi - h, \widetilde{\xi} - \widetilde{h}\rangle^X \equiv \langle h', h' , h' \rangle^{X'} \pmod{x^2, y}.
\end{equation}
We will see that this follows easily by a direct comparison.

From the previous calculations, especially \eqref{ON-sym} and the table on $g_i(x, y, z)$'s following it, we know that $\Tp h' = \xi - h$ and 
\begin{equation} \label{e:w3-ltft}
\begin{split}
[\xi - h] &= \tilde w_3(0) = (\xi - h) + f_3(x, y, 0)\, K_1 \\
&= (\xi - h) + x\,\kappa_0.
\end{split}
\end{equation}
Then the LHS of \eqref{e:mod-y} is simply a topological term
$$
x \langle \xi - h, \xi - h, \kappa_0 \rangle^X_{\beta = 0} = x.
$$
And the only extremal ray invariant of the RHS of \eqref{e:mod-y} is easily seen, from the $I$ function of $X'$, to be $1 \cdot q^{\ell'} = x$. Hence \eqref{e:mod-y} holds.

\begin{theorem} [Linear invariance along extremal rays] \label{t:flip-extr}
For extremal primary Gromov--Witten invariants of at least $n \ge 3$ insertions, we have
$$
\langle [\xi - h]^{\otimes n}\rangle^X = \langle (h')^{\otimes n}\rangle^{X'}.
$$
In fact this is equivalent to the quantum interpretation of Cayley's formula
$$
a_d := \langle \kappa_0^{\otimes (d + 1)}\rangle^X_{d\ell} = d^{d - 2}, \qquad d \ge 1.
$$
\end{theorem}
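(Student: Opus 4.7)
The plan is to promote the 3-point quantum invariance of Theorem~\ref{t:small} to an $n$-point statement along the extremal ray, reduce both sides to closed combinatorial expressions, and recognize the resulting identity as equivalent to Cayley's formula $a_d = d^{d-2}$.

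First, along the extremal axis (i.e.\ modulo $y$), both small quantum rings on $X$ and $X'$ are generated by $H^2$, so the Kontsevich--Manin / Iritani reconstruction principle --- equivalently, iterated divisor axiom and WDVV --- promotes the three-point identity \eqref{e:dframe} to an identity on all $n$-point primary invariants with $n \geq 3$, applied to the pair $([\xi-h],\,h')$ coming from the correspondence $\Tp$.

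Next, I would unwind both sides at the $q^{\ell'}$-linear level. On $X$, Theorem~\ref{t:frame} and Proposition~\ref{p:frame} give, modulo $y$ (where $t=0$ and $b=1$), the equality $[\xi-h] = (\xi-h) + x\kappa_0$ with $x = q^{\ell'} = 1/q^{\ell}$. Expanding $[\xi-h]^{\otimes n}$ multinomially and collecting the $q^{\ell'}$-linear coefficient: for each curve class $d\ell$ with $d\geq 1$, the virtual-dimension count ($\dim X = 4$, $c_1(X)\cdot\ell = 1$, $\deg\kappa_0 = 2$) forces exactly $k=d+1$ copies of $\kappa_0$ to appear, and the divisor axiom with $(\xi-h)\cdot\ell = -1$ strips off the remaining $(\xi-h)$ factors, leaving the contribution $\binom{n}{d+1}(-d)^{n-d-1}\,a_d$. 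The classical $\beta=0$ piece contributes an additional $3\delta_{n,3}$, via $\int_X(\xi-h)^4 = \int_X [Z]^2 = \int_Z h^2 = 1$. On $X'$, the extremal class $\ell'$ is literally $[Z'] = [P^1]$, and since $f^*N_{Z'/X'} = \mathcal{O}(-1)^{r+1}$ has vanishing $H^0$ and $H^1$ for degree-one $f$, $[\overline{M}_{0,n}(X',\ell')]^{vir} = [\overline{M}_{0,n}(P^1,1)]$; the identification $\overline{M}_{0,3}(P^1,1) \cong (P^1)^3$ yields $\langle (h')^3\rangle^{X'}_{\ell'} = 1$, propagated to all $n \geq 3$ by the divisor axiom, while the constraint $c_1(X')\cdot\ell' = -1$ rules out all $d'\geq 2$.

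Equating the $q^{\ell'}$-linear coefficients reduces the theorem to the family of identities
\[
3\,\delta_{n,3} \;+\; \sum_{d\geq 1}\binom{n}{d+1}(-d)^{n-d-1}\,a_d \;=\; 1, \qquad n \geq 3.
\]
After the substitution $j = d+1$, inserting $a_d = d^{d-2}$ collapses $(j-1)^{n-j}(j-1)^{j-3} = (j-1)^{n-3}$; completing the sum to $j=0,\ldots,n$ with the boundary contributions $(-1)^{n-3}$ from $j=0$ and $-n\delta_{n,3}$ from $j=1$ reduces everything to the standard finite-difference identity $\sum_{j=0}^n \binom{n}{j}(-1)^j P(j) = 0$ for any polynomial $P$ of degree $<n$, yielding $1$ on the left-hand side. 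Conversely, the same identities recursively and uniquely determine $a_d$ from $a_1,\ldots,a_{d-1}$ (with $a_1 = 1$ obtained directly from $\overline{M}_{0,2}(P^2,1)$), so they are equivalent to Cayley's formula. The main obstacle is the first step --- propagating the three-point quantum invariance to $n$-point invariants across the analytic continuation at $q^{\ell'} = 0$, where the Dubrovin connection acquires irregular singularities. The crucial observation is that modulo the extremal axis $y = 0$ all relevant connection matrices $\tilde C^{11}_a$ and change-of-frame matrices are \emph{polynomial} in $q^{\ell'}$ (cf.\ Proposition~\ref{p:5.4} and Lemma~\ref{l:extr}), so the reconstruction can be carried out coefficient-by-coefficient in $q^{\ell'}$, and the remainder of the argument is the combinatorial bookkeeping above.
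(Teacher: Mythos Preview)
Your combinatorial reduction in Steps~2--3 is correct and matches the paper's: expanding $[\xi-h]=(\xi-h)+x\kappa_0$, applying the dimension count and the divisor axiom $(\xi-h)\cdot\ell=-1$, one lands on precisely the identity \eqref{e:recur-a}, and substituting $a_{j-1}=(j-1)^{j-3}$ collapses it to $\sum_{j}(-1)^{n-j}\binom{n}{j}(j-1)^{n-3}=0$, which the paper records via Stirling numbers of the second kind.

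The gap is Step~1. Reconstruction (Kontsevich--Manin or Iritani) pins down $n$-point invariants from three-point data \emph{within} a single theory; it does not by itself transport an identity \emph{across} the flip through a nontrivial frame change and a generalized mirror map. Equation~\eqref{e:dframe} matches structure constants in the normalized frame $\breve\bT_\mu$ on $X$ at $\Tp\hat\bs$ with those on $X'$ at $\sigma(\hat\bs)$, and Corollary~\ref{c:GMT} shows that $\sigma$ already leaves $H^{\le 2}(X')$ at order $x^2$ even modulo $y$; so differentiating \eqref{e:dframe} in $\hat\bs$ does not simply produce extra $h'$ (resp.\ $[\xi-h]$) insertions on the two sides. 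What you would need is that the embedding $\widehat\Tp$ of Theorem~\ref{t:main} is \emph{linear} along the extremal direction --- but the Remark immediately following Theorem~\ref{t:flip-extr} records this linearity as a \emph{consequence} of the theorem, not an input, so invoking it here is circular. (There is also a frame mismatch: $[\xi-h]=\tilde w_3(0)$ is the block-diagonalized frame element, not $\bT_3$ or $\breve\bT_3$.) The paper closes the gap by reversing the logic: it proves $a_d=d^{d-2}$ \emph{directly on $X$}, with no reference to $X'$. Since $\kappa_0=(\xi-h)^2$, divisorial splitting via WDVV yields the quadratic recursion $a_d=\sum_{d'=1}^{d-1}d'\,a_{d'}a_{d-d'}\binom{d-2}{d'-1}$; the exponential generating function $\mathscr{E}=\sum_{d\ge 1} a_d\,t^{d-1}/(d-1)!$ then satisfies Euler's functional equation $\mathscr{E}=e^{t\mathscr{E}}$, whose unique power-series solution gives $a_d=d^{d-2}$. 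The $n$-point invariance then \emph{follows} from Cayley's formula via the Stirling identity, rather than the other way around.
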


\begin{proof}
We start with the one point invariant. For $d_2' = 0$ we have
\begin{equation} 
\label{e:IX'}
I^{X'}_{d' \ell'} = (-1)^{3(d' - 1)}\frac{(\xi' - h')^3 \prod_{m = 1}^{d' - 1}{(h' + mz)}}{(h' + d'z)^2},
\end{equation}
while the virtual dimension is $-d' + (4 - 3) + 1 = 2 - d'$. Thus $d' = 1$ and we have a divisor insertion. Indeed from the above $I$ expression $I = J$ for $\beta' = \ell'$ and we have $\langle h' \rangle^{X"} = q^{\ell'} \, I^{X'}_{\ell'}.h' = q^{\ell'}$. Hence for all $n \ge 0$:
$$
\langle (h')^{\otimes n} \rangle^{X'} = q^{\ell'} = x.
$$ 

On the $X$ side, we compute the corresponding terms. The virtual dimension for $\beta = d\ell$ is now $d + 1 + n$. So there are exactly $d + 1$ insertions which support the class $\kappa_0$. In the following we assume that $n \ge 2$. 

We expand the homogeneous expression
\begin{equation} \label{e:homo-exp}
\begin{split}
\langle [\xi - h]^{\otimes n}\rangle^X &= \langle ((\xi - h) + x\,\kappa_0)^{\otimes n}\rangle^X \\
&= \sum_{j = 2}^n C^n_j\,x^j\,q^{(j - 1)\ell} \,\langle (\xi - h)^{\otimes n - j} \otimes \kappa_0^{\otimes j}\rangle^X_{(j - 1)\ell} + 3x\,\delta_{n, 3}\\
&= x \sum_{j = 2}^n (-1)^{n - j} C^n_j (j - 1)^{n - j} \langle \kappa_0^{\otimes j}\rangle^X_{(j - 1)\ell} + 3x\,\delta_{n, 3}
\end{split}
\end{equation}
where the divisor axiom is used, and in the case $n = 3$ there are 3 more terms coming from the classical product $(\xi - h, \xi - h, \kappa_0) = 1$. 

From \eqref{e:homo-exp}, the theorem amounts to the assertion that 
\begin{equation} \label{e:recur-a}
\sum_{j = 0}^n (-1)^{n - j} C^n_j (j - 1)^{n - j} a_{j - 1} = -3\,\delta_{n, 3}.
\end{equation}
Here we use the convention $a_{-1} = -1 = (-1)^{-1 - 2}$ and $a_0$ can be assigned arbitrarily since it always comes with the coefficient $0$.

For $n = 2$, \eqref{e:recur-a} requires that $a_1 = 1$. This can be proved by direct divisorial reconstruction or by looking at the 3 point invariant $\langle h, \kappa_0, \kappa_0 \rangle = q_1$ as shown in the explicit calculation \eqref{A-21}.

For $n = 3$, \eqref{e:recur-a} is then equivalent to $-1 + 0 - 3a_1 + a_2 = -3$, that is $a_2 = 1$. For $n = 4$, \eqref{e:recur-a} is then equivalent to 
$$
-1 - 0 + 6a_1 - 4\times 2^1 a_2 + a_3 = 0,
$$
that is $a_3 = 3$. For $n = 5$ this gives $a_4 = 16 = 4^2$. For $n = 6$ this gives $a_5 = 125 = 5^3$. Thus it is tempted to guess if $a_d = d^{d - 2}$ holds? We will see that this is indeed the case.

Recall that the Striling number of second kind $S(m, n)$ for two integers $m, n \ge 0$ is the number of partitions of $m$ elements into $n$ disjoint subsets. It is defined to be zero if $m < n$. It admits a nice relation with the combinatorial number $C^n_j$, namely for any $m, n \ge 0$, 
\begin{equation} \label{e:Stirling2}
n! S(m, n) = \sum_{j = 0}^n  (-1)^{n - j} C^n_j j^m.
\end{equation}
(cf.~\cite[p.265 (6.19)]{Knuth}). It follows from \eqref{e:Stirling2} easily that
\begin{equation} \label{e:Stirling}
\sum_{j = 0}^n (-1)^{n - j} C^n_j (j - 1)^{n - 3} = 0.
\end{equation}

Now we may continue the proof of Theorem \ref{t:flip-extr} to establish $a_d = d^{d - 2}$ under the assumption on quantum invariance relation \eqref{e:recur-a}. Indeed, by induction on $n = d + 1$, the validity of \eqref{e:recur-a} is equivalent to equation \eqref{e:Stirling} for all $n \ge 4$ by substituting $a_{j - 1} = (j - 1)^{j - 3}$ into it and notice that the power $n - 3$ is then uniform for all $j$.
 
Conversely, we will now prove the quantum invariance \eqref{e:recur-a} by establishing $a_d = d^{d - 2}$, for all $d \ge 1$ directly.

Let $a_0 = 0$, $a_1 = 1$ and $d \ge 2$. Since $\kappa_0 = (\xi - h)^2$, by applying the divisorial reconstruction we get the following recursive formula 
\begin{equation} \label{e:div}
a_d = \sum_{d' = 1}^{d - 1} d' \, a_{d'} \,a_{d - d'}\, C^{d - 2}_{d' - 1} .
\end{equation}
While it is possible to show that \eqref{e:div} is equivalent to \eqref{e:recur-a}, which must be the case after the theorem is proved, we will proceed differently. Since 
$$
d(d - 1) \frac{d'\,C^{d - 2}_{d' - 1}}{d!} = \frac{(d')^2}{(d')!} \frac{d - d'}{(d - d')!},
$$
the exponential generating function $g = \sum_{d \ge 0} a_d t^d/d!$ then satisfies 
$$
t^2 g'' = t(tg')' \cdot (tg'). 
$$
Let $\mathscr{E} = g'$. Then $\mathscr{E}' = (t\mathscr{E})' \mathscr{E}$. That is, $\mathscr{E}'/\mathscr{E} = (t\mathscr{E})'$. Since $\mathscr{E}(0) = g'(0) = 1$, after integration we get $\log \mathscr{E} = t \mathscr{E}$. That is we arrive at the famous functional equation of Euler (see e.g.~\cite[p.369]{Knuth}):
\begin{equation} \label{e:Cayley-fe}
\mathscr{E} = e^{t \mathscr{E}}.
\end{equation}
Equation \eqref{e:Cayley-fe} has explicit solution given by (cf.~\cite[p.369 (7.85)]{Knuth})
$$
\mathscr{E}(t) = \sum_{d \ge 1} d^{d - 2}\frac{t^{d - 1}}{(d - 1)!},
$$
hence $a_d = d^{d - 2}$ as expected. The proof is completed. 
\end{proof}

\begin{remark}
The number $a_d = d^{d - 2}$ is traditionally known as the Cayley number in combinatorics. It is the number of \emph{spanning trees} in the complete graph on $d$ vertexes (and hence with $d - 1$ edges). It is interesting to see if the localization techniques in evaluating $\langle \kappa_0^{\otimes (d + 1)}\rangle_{d\ell}$ leads to the graph sum corresponding to these spanning trees. 
\end{remark}

\begin{remark}
Theorem \ref{t:flip-extr} implies that for $(2, 1)$-flips, the embedding 
$$
QH(X') \hookrightarrow QH(X)
$$
is linear when restricting to the extremal ray variable. However, a lengthy yet straightforward calculation shows that
$$
\langle \tilde K_1, \tilde w_3, \tilde w^6, \tilde w_3\rangle \equiv - x^4 y \pmod{y^2},
$$
which implies that the embedding 
is necessarily non-linear.  It is an interesting question whether the embedding over the extremal ray variable is always linear for $(r, r')$-flips.
\end{remark}


\begin{thebibliography}{99}

\bibitem{CG} T.\ Coates and A.\ Givental; 
{\it Quantum Riemann--Roch, Lefschetz and Serre}, 
Ann.\ of Math.\ {\bf 165} (2007), no.1, 15--53.

\bibitem{Du} B.\ Dubrovin;
\textit{Geometry of 2D topological field theories}, in Integrable Systems and Quantum Groups, Lecture Notes in Math.\ {\bf 1620}, Springer-Verlag 1996, 120--348. 

\bibitem{Guest} M.\ Guest;
\textit{From Quantum Cohomology to Integrable Systems}, Oxford Univ.\ Press 2008.

\bibitem{Knuth} R.\ Graham, D.\ Kunth and O.\ Patashnik;
\textit{Concrete Mathematics: A Foundation of Computer Science, 2nd edition}, Addison-Wesley 1994.

\bibitem{Hertling} C.\ Hertling;
\textit{Frobenius manifolds and moduli spaces for singularities}, Cambridge University Press 2002. 

\bibitem{Iritani} H.\ Iritani;
\textit{Quantum $D$-modules and generalized mirror transformations}, Topology {\bf 47} (2008), 225--276.

\bibitem{hI2} ---{}---;
\textit{Global mirrors and discrepant transformations for toric Deligne-Mumford stacks}, SIGMA Symmetry Integrability Geom.\ Methods Appl.\ 16 (2020), Paper No.\ 032, 111 pp.

\bibitem{LLW1} Y.-P.~Lee, H.-W.~Lin and C.-L.~Wang;
\textit{Flops, motives and invariance of quantum rings}, Ann.\ of Math.\ (2) {\bf 172} (2010), no.\ 1, 243--290.

\bibitem{LLW-II} ---{}---;
\textit{Invariance of quantum rings under ordinary flops II: A quantum Leray--Hirsch theorem}, Algebraic Geometry {\bf 3} (2016),
no.\ 5, 215--253.
\bibitem{LLW4} ---{}---;
\textit{A blowup formula in Gromov--Witten theory}, work in progress.

%\bibitem{JL1} J.\ Li; 
%\textit{Stable morphisms to singular schemes and relative stable morphisms}, J.\ Diff.\ Geom.\
%\textbf{57} (2001) 509--578.
%
%\bibitem{JL2} ---{}---;
%\textit {A degeneration formula for GW-invariants}, J.\ Diff.\ Geom.\
%\textbf{60} (2002) 199--293.

\bibitem{Sibuya} Y.\ Sibuya;
\textit{Linear Differential Equations in the Complex Domain: Problem of Analytic Continuation}, AMS Transl.\ Math.\ Monograph {\bf 82}, 1990.  

\bibitem{Wasow} W.\ Wasow;
\textit{Asymptotic Expansions for Ordinary Differential Equations}, Interscience Publishers, New York, 1965.

\end{thebibliography}
\end{document}